\undefined \DeclareGraphicsRule{*}{eps}{*}{} \else
\newtheorem{theorem}{Theorem}[section]
\newtheorem{lemma}[theorem]{Lemma}
\newtheorem{cor}[theorem]{Corollary}
\newtheorem{prop}[theorem]{Proposition}
\newtheorem{conj}[theorem]{Conjecture}
\newtheorem{remark}[theorem]{Remark}
\newtheorem{definition}[theorem]{Definition}
\newtheorem{exam}[theorem]{Example}
\newtheorem{problem}[theorem]{Problem}
\newtheorem{question}[theorem]{Question}
\begin{document}

\title{{The universal zero-sum invariant and weighted zero-sum for infinite abelian groups}}
\author{
Guoqing Wang\\
\small{School of
Mathematical Sciences, Tiangong University, Tianjin, 300387, P. R. China}\\
\small{Email: gqwang1979@aliyun.com}
\\
}
\date{}
\maketitle

\begin{abstract} Let $G$ be an abelian group, and let $\mathcal F (G)$ be the free
commutative monoid with basis $G$. For $\Omega \subset \mathcal F (G)$, define the universal zero-sum invariant ${\mathsf d}_{\Omega}(G)$ to be the smallest integer $\ell$ such that
every sequence $T$ over $G$ of length $\ell$ has a subsequence in $\Omega$. The invariant ${\mathsf d}_{\Omega}(G)$ unifies many classical zero-sum invariants. Let $\mathcal B (G)$ be the submonoid of $\mathcal F (G)$ consisting of
all zero-sum sequences over $G$, and let $\mathcal A (G)$ be the set consisting of all minimal zero-sum sequences over $G$. The empty sequence, which is the identity of  $\mathcal B (G)$, is denoted by $\varepsilon$. The well-known Davenport constant ${\rm D}(G)$ of the group $G$ can be also represented as ${\mathsf d}_{\mathcal B (G)\setminus \{\varepsilon\}}(G)$ or ${\mathsf d}_{\mathcal A (G)}(G)$ in terms of the universal zero-sum invariant.
Notice that $\mathcal A (G)$ is the unique minimal generating set of the monoid $\mathcal B (G)$ from the point of view of Algebra. Hence, it would be interesting to determine whether $\mathcal A (G)$ is minimal to represent the Davenport constant or not for a general finite abelian group $G$. In this paper, we show that except for a few special classes of groups, there always exists a proper subset $\Omega$ of $\mathcal A (G)$ such that  ${\mathsf d}_{\Omega}(G)={\rm D}(G)$. Furthermore, in the setting of finite cyclic groups, we discuss the distributions of all minimal sets by determining their intersections.

By connecting the universal zero-sum invariant with weights, we make a study of zero-sum problems in the setting of {\sl infinite} abelian groups.
The universal zero-sum invariant ${\mathsf d}_{\Omega; \Psi}(G)$ with weights set $\Psi$ of homomorphisms of groups is introduced for all abelian groups.
The weighted Davenport constant ${\rm D}_{\Psi}(G)$ (being an special form of the universal invariant with weights) is also
investigated for infinite abelian groups.
Among other results, we obtain the necessary and sufficient conditions such that ${\rm D}_{\Psi}(G)<\infty$ in terms of the weights set $\Psi$ when $|\Psi|$ is finite. In doing this, by using the Neumann Theorem on Cover Theory for groups we establish a connection between the existence of a finite cover of an abelian group $G$ by cosets of some given subgroups of $G$, and the finiteness of weighted Davenport constant.
\end{abstract}

\noindent{\small {\bf Key Words}: {\sl  Universal zero-sum invariant; Davenport constant; Weighted Davenport constant;  Minimal zero-sum sequences; Minimal sets with respect to Davenport constant; Monoid of
zero-sum sequences; Zero-sum for infinite abelian groups; Covers of groups}}

\section {Introduction}

Let $G$ be a finite abelian group. The Davenport constant ${\rm D}(G)$ of $G$ is defined as
the smallest positive integer $\ell$ such that, every sequence of $\ell$ terms from $G$
contains some terms with sum being the identity element. This invariant was first formulated by K. Rogers \cite{Rogers}, and popularized by H. Davenport in the 1960$'$s, notably for its link with algebraic number theory (as
reported in \cite{Olson1}), and has been investigated extensively in the past 60 years.
This combinatorial
invariant was found with applications in other areas, including Factorization Theory of Algebra (see
\cite{CziszterDoGerolding, GRuzsa, GH}), Classical Number Theory, Graph Theory, and Coding Theory. For example, the Davenport
constant has been applied by W.R. Alford, A. Granville and C. Pomerance \cite{AGP} to prove that there are
infinitely many Carmichael numbers.

What is more important, a lot of researches were motivated by
the Davenport constant together with the celebrated EGZ Theorem obtained by P. Erd\H{o}s, A. Ginzburg and A. Ziv \cite{EGZ} in 1961 on additive properties of sequences in the setting of finite groups (mainly in finite abelian groups), which have been developed into
a branch, called zero-sum theory (see \cite{GaoGeroldingersurvey} for a survey), in Combinatorial Number Theory. Zero-sum in the setting of finite abelian groups, deals with the problems associated with the existence of a substructure with given additive properties, in a combinatorial structure such as set, multiset, hypergraph, etc. While, the conditions for the existence of substructures with distinct flavours of properties are formulated as distinct invariants, called zero-sum constants. Hence, a lot of zero-sum constants were investigated in zero-sum theory. To understand the common behaviors of all those zero-sum constants, a new way
to view all zero-sum constants was proposed \cite{GaoLiPengWang} which is stated in Definition \ref{definition domenga}. For more researches, one is refer to \cite{GaoHong1,GaoHong2}.

Let $\mathcal F (G)$ be the free
commutative monoid, multiplicatively written, with basis $G$, i.e., consisting of all finite sequences of terms from the group $G$. Let $\mathcal B (G)$ be the submonoid of $\mathcal F (G)$ consisting of
all {\sl zero-sum} sequences over $G$.

\begin{definition} \label{definition domenga} \cite{GaoLiPengWang}
For any nonempty set $\Omega \subset \mathcal F (G)$, we define the universal zero-sum invariant ${\mathsf d}_{\Omega}(G)$ to be the smallest positive integer $\ell$ (if it exists, otherwise ${\mathsf d}_{\Omega}(G)=\infty$) such that
every sequence over $G$ of length $\ell$ has a subsequence in $\Omega$.
\end{definition}

Observe that  ${\mathsf d}_{\Omega}(G)=1$ if the empty sequence $\varepsilon$ belongs to $\Omega$. Hence, we need only to consider the case that $\varepsilon\notin \Omega$ in what follows. The monoid $\mathcal B (G)$ is the common setting for both zero-sum theory and factorization theory of algebra  (see \cite{GH}), and moreover, in terms of ${\mathsf d}_{\Omega}(G)$,
the investigation on zero-sum problems are mainly associated with all different types of subsets $\Omega$ of $\mathcal B (G)$. That is, by taking distinct subset $\Omega$ of $\mathcal B (G)\setminus \{\varepsilon\}$, the universal invariant ${\mathsf d}_{\Omega}(G)$ reduces to the corresponding zero-sum constant.
For example, the Davenport constant of a finite abelian group $G$ can be denoted in terms of ${\mathsf d}_{\Omega}(G)$ as ${\rm D}(G)={\mathsf d}_{\mathcal B (G)\setminus \{\varepsilon\}}(G)$.
 Meanwhile, since every sequence $T\in \mathcal B (G)\setminus \{\varepsilon\}$ can be factorized into a product of elements of $\mathcal A (G)$ which is the subset of $\mathcal B (G)$ consisting of
all minimal zero-sum sequences over $G$ (see \cite{GH}), we have that $${\mathsf d}_{\mathcal A (G)}(G)={\mathsf d}_{\mathcal B (G)\setminus \{\varepsilon\}}(G)={\rm D}(G).$$
From the point of Factorization Theory in Algebra, the set $\mathcal A (G)$ consists of all irreducible elements of the monoid $\mathcal B (G)$ and it is definitely a minimal generating set of $\mathcal B (G)$.  However, from the point of invariants in Zero-sum Theory,  we know little about the minimality of $\mathcal A (G)$ with respect to the Davenport constant ${\rm D}(G)$.
Hence, the following basic question arouse on the minimality of $\mathcal A (G)$ to represent the Davenport constant:

\begin{question}\label{question}
Is $\Omega=\mathcal A (G)$ minimal with respect to ${\mathsf d}_{\Omega}(G)={\rm D}(G)$, i.e., does there exist $\Omega'\subsetneq\mathcal A (G)$ such that ${\mathsf d}_{\Omega'}(G)={\rm D}(G)$?
\end{question}

To formulate the questions more precisely, we shall need the following definition which was introduced  first in \cite{GaoLiPengWang}.

\begin{definition}\label{Definition minimal set}  (see \cite{GaoLiPengWang}, Section 4) Let $G$ be a finite abelian group, and let $t>0$ be an integer.  A set $\Omega\subset \mathcal B (G)$ is called {\bf minimal} with respect to $t$ provided that ${\mathsf d}_{\Omega}(G)=t$ and ${\mathsf d}_{\Omega'}(G)\neq t$ for any subset $\Omega'\subsetneq\Omega$.
In particular, if $G$ is finite and $t={\rm D}(G)$ then we just call $\Omega$ a minimal set for short.
\end{definition}

\begin{remark}\label{remark} By definition, we have that ${\mathsf d}_{\Omega_1}(G)\geq {\mathsf d}_{\Omega_2}(G)$ for any $\Omega_1\subset \Omega_2$.
\end{remark}

If $\mathcal A (G)$ is not a minimal set then the following question proposed in \cite{GaoLiPengWang} is interesting.

\begin{question} (see \cite{GaoLiPengWang}, Question 1) \label{question1}
What can be said about minimal sets $\Omega$?
\end{question}

Moreover, the investigation of minimal sets also closely relates to some classical zero-sum problems in cyclic groups. For example, the Lemke-Kleitman conjecture stated:

\begin{conj} \cite{LemkeKleitman} (Lemke-Kleitman conjecture)  \label{prop Conjecture}
Every sequence $T$ of length $n$ over the cyclic group $\mathbb{Z}_n$ contains a subsequence of Index one.
\end{conj}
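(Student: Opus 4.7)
The plan is to reduce the problem in stages and then attack a carefully chosen base case before handling composite $n$ by induction. Since the Davenport constant satisfies ${\rm D}(\mathbb{Z}_n)=n$, any sequence $T$ of length $n$ over $\mathbb{Z}_n$ automatically contains a nonempty zero-sum subsequence, so the real content of the conjecture is upgrading ``zero-sum'' to ``index one''. I would first normalize: the property of containing an index-one subsequence is invariant under the action of $\mathbb{Z}_n^\times$ on sequences by scalar multiplication (the index of a zero-sum sequence is defined precisely to be minimized over this action), so I may assume $T$ is in some canonical form, e.g.\ that $1 \in \mathrm{supp}(T)$ or that the multiplicities are arranged in a standard way.

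For the base case $n=p$ prime, I would use the classical argument: lift each term to its representative in $\{1,\dots,p-1\}$, sort the resulting multiset in a convenient way, and apply pigeonhole to the partial sums modulo $p$; two coincident partial sums yield a zero-sum subsequence whose integer-representative sum can be shown to equal exactly $p$, which is index one. For composite $n$ I would induct on $n$ using the divisor lattice: given a proper divisor $d\mid n$, the projection $\pi \colon \mathbb{Z}_n \twoheadrightarrow \mathbb{Z}_{n/d}$ takes $T$ to a sequence of length $n$ over $\mathbb{Z}_{n/d}$, from which I would extract many zero-sum subsequences of $\pi(T)$ and pull them back; each preimage is a sequence over the kernel $d\mathbb{Z}_n \cong \mathbb{Z}_{n/d}$, and by induction on the kernel I would try to combine these fibre-data into a single subsequence of $T$ with total representative-sum equal to $n$. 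A parallel line of attack uses the structure theorem for long minimal zero-sum sequences over $\mathbb{Z}_n$ (which asserts that such sequences of length close to $n$ have a very restricted form with index one) to handle the case when $T$ already contains a long minimal zero-sum subsequence.

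The main obstacle I expect is the lifting step in the induction: even if every zero-sum subsequence of $\pi(T)$ has index one in $\mathbb{Z}_{n/d}$, the representative-sum in $\mathbb{Z}_n$ of its preimage depends on the contribution of the kernel component, which can inflate the index in uncontrolled ways. Controlling this requires choosing the divisor $d$ and the preimage carefully and balancing contributions from several fibres simultaneously, and this is precisely the obstruction that has kept the conjecture open in full generality, with the strongest unconditional results covering only prime $n$, prime-power $n$, and a handful of structured composite cases. For this reason my proposal should be regarded as a strategy sketch rather than a complete route; realistically, I would aim first to reprove the known prime-power case cleanly via this induction, and only then attempt to push through the combinatorial estimates needed for general $n$.
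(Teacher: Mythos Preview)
The statement you are trying to prove is labelled a \emph{conjecture} in the paper, and the paper does not prove it; on the contrary, the very next line after the statement records that the conjecture has been \emph{disproved} in general (by Gao, Li, Peng, Plyley and the author, \cite{Index}, Theorem~1.2) and remains \emph{open} for prime $n$. So your proposal cannot succeed as written, and the failure is not merely technical.

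There are two concrete gaps. First, your ``classical'' base case for prime $p$ is not classical at all: the pigeonhole argument on partial sums produces a zero-sum subsequence, but the integer sum of the lifted representatives of that subsequence can be any positive multiple of $p$, not necessarily $p$ itself, so you do not get index one. Indeed, the prime case is exactly what the paper singles out as still open (and reformulates as Conjecture~\ref{prop equivalent proposition}). Second, your inductive lifting step for composite $n$ is not just ``hard to control'' as you suspect; it is genuinely impossible in general, because counterexamples exist. The obstruction you identify (kernel contributions inflating the index) is real and fatal, not merely a bookkeeping nuisance. Any honest approach to this statement must begin by acknowledging that the target is false for general $n$ and restricting attention to the remaining open case $n=p$ prime.
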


Conjecture \ref{prop Conjecture} was disproved, in general,  by the author together with Gao, Li, Peng and Plyley (see \cite{Index}, Theorem 1.2). However, it remains open for the cyclic group $\mathbb{Z}_p$ with $p$ a prime. The above Lemke-Kleitman conjecture for prime $p$ is equivalent to the following related to the minimal sets.

\begin{conj}\label{prop equivalent proposition} There exists a
minimal set $\Omega\subset\mathcal A (\mathbb{Z}_p)$ with  $\Omega\subset \{V\in \mathcal A (\mathbb{Z}_p): V \mbox{ has Index } 1\}$.
\end{conj}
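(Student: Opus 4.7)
The plan is to decouple the statement into an existence-of-witness step and a minimization step, with the first step being the genuine mathematical content.

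Let $\Omega^{(1)} = \{V \in \mathcal{A}(\mathbb{Z}_p) : V \text{ has Index } 1\}$. My first goal is to prove that $\mathsf{d}_{\Omega^{(1)}}(\mathbb{Z}_p) = p$, i.e.\ every sequence $T$ of length $p$ over $\mathbb{Z}_p$ contains a zero-sum subsequence of Index $1$. Granting this, the conclusion follows easily: since every element of $\mathcal{A}(\mathbb{Z}_p)$ has length at most $\mathrm{D}(\mathbb{Z}_p) = p$, the set $\mathcal{A}(\mathbb{Z}_p)$ (and hence $\Omega^{(1)}$) is finite. I then iteratively remove elements from $\Omega^{(1)}$ whenever such removal preserves the property $\mathsf{d}_{\cdot}(\mathbb{Z}_p) = p$; by finiteness this process terminates at a set $\Omega \subseteq \Omega^{(1)}$ from which nothing further can be removed without strictly increasing the invariant. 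By Remark \ref{remark}, such an $\Omega$ is necessarily minimal in the sense of Definition \ref{Definition minimal set}, which is exactly the desired conclusion.

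The heart of the matter is therefore the inequality $\mathsf{d}_{\Omega^{(1)}}(\mathbb{Z}_p) \le p$, which is precisely the Lemke--Kleitman conjecture in the prime case and is the main obstacle. The approach I would pursue is the standard lift-to-$\mathbb{Z}$ analysis: given $T$ of length $p$, after multiplying by a unit I may assume some normalization of the multiplicity pattern, and represent each term by its canonical integer in $\{1, \ldots, p-1\}$ (zero terms trivially give Index-$1$ zero-sum subsequences of the form $0$). The total integer sum of any zero-sum subsequence is a positive multiple $jp$ of $p$, and Index $1$ corresponds to $j=1$. By the theorem of Erd\H{o}s--Ginzburg--Ziv applied to $\mathbb{Z}_p$, zero-sum subsequences of length divisible by $p$ certainly exist, so the question is a \emph{refinement}: among all zero-sum subsequences of $T$, does at least one have representative sum exactly $p$?

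To attack this, I would exploit the Savchev--Chen style structure theorems, which describe zero-sum free sequences in $\mathbb{Z}_p$ of length near $p-1$ in very rigid terms, combined with Vosper-type inverse additive combinatorics applied to the set of subsequence sums $\Sigma(T) \subset \mathbb{Z}_p$. The case split would be along the length of the longest zero-sum free initial segment: if $T$ has large multiplicity of some residue $g$, a greedy construction produces an Index-$1$ subsequence $g^{p/\gcd}$; otherwise the support of $T$ is large, and a Cauchy--Davenport iteration forces the set of \emph{integer} subset sums to hit every residue class modulo $p$ densely enough that one necessarily realizes the value $p$ itself rather than only $2p, 3p, \ldots$. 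The crucial, and genuinely hard, step is ruling out the pathological configurations in which every zero-sum subsequence has representative sum $\ge 2p$; this is the precise point at which the Lemke--Kleitman conjecture has resisted proof, and any complete argument would require new structural input (for instance sharper inverse theorems on subset sums modulo a prime or a clever weighting argument on $\Omega^{(1)}$). Once this obstruction is overcome, the minimization step above finishes the proof.
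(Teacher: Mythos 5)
You should note first that the statement you were asked to prove is labeled a \emph{Conjecture} in the paper: the author explicitly states that it is equivalent to the Lemke--Kleitman conjecture for $\mathbb{Z}_p$ (Conjecture \ref{prop Conjecture}), which remains open for primes, and the paper offers no proof of either. Your proposal correctly isolates the two halves of that equivalence. The ``minimization step'' is sound: since $\Omega^{(1)}\subset\mathcal A(\mathbb{Z}_p)$ gives $\mathsf d_{\Omega^{(1)}}(\mathbb{Z}_p)\geq \mathrm{D}(\mathbb{Z}_p)=p$ by Remark \ref{remark}, and a zero-sum subsequence of Index $1$ is automatically a minimal zero-sum sequence, the whole problem does reduce to showing $\mathsf d_{\Omega^{(1)}}(\mathbb{Z}_p)\leq p$; and once that is granted, the extraction of a minimal set from the finite family $\Omega^{(1)}$ is exactly the paper's Lemma \ref{Lemma existence for finite case}. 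So your framing matches the paper's.

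The genuine gap is that the one step carrying all the mathematical content --- that every sequence of length $p$ over $\mathbb{Z}_p$ contains a zero-sum subsequence of Index $1$ --- is not proved. Your final paragraph is a survey of plausible attack routes (Savchev--Chen structure, Cauchy--Davenport/Vosper-type inverse results, lifting to integer representatives), and you candidly concede that ``ruling out the pathological configurations in which every zero-sum subsequence has representative sum $\ge 2p$'' is ``the precise point at which the Lemke--Kleitman conjecture has resisted proof'' and ``would require new structural input.'' That concession is honest, but it means the proposal is a reduction plus a research program, not a proof. As it stands, nothing in the proposal (nor in the paper) establishes the conjecture; the correct assessment is that the statement is open and your attempt, while correctly organized, leaves its essential difficulty untouched.
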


To obtain the general information on the distributions of minimal sets, W. Gao et al. \cite{GaoHong1} introduced the following essential definition.

\begin{definition}\label{Definition Qk(G)} \ For every positive integer $t\geq  {\rm D}(G)$, let
$$Q_t(G) = \bigcap\limits_{\Omega \subset \mathcal{B}(G), \ d_{\Omega}(G)=t} \Omega.$$
\end{definition}

In \cite{GaoHong1}, the authors obtained some nice properties on $Q_t(G)$. However, a little was known on the set $Q_t(G)$ itself. Motivated by the above questions and associated researches done in the past, we shall determine the set $Q_t(G)$ for all $t$ in finite cyclic groups $G$ in Section 3.
We shall answer Question \ref{question} by showing that, expect for a few special classes of groups, the set $\mathcal A (G)$ is not a minimal set with respect to the Davenport constant in Section 4.

In the remaining sections of this paper, the universal zero-sum invariant ${\mathsf d}_{\Omega; \Psi}(G)$ with the weights set $\Psi$ of homomorphisms is introduced and investigated for all abelian groups $G$
(not necessarily finite).  The  weighted Davenport constant, as the first weighted zero-sum invariant,  was introduced
by Adhikari, Chen et al. (see \cite{Adhikari1, Adhikari2}) for finite abelian groups, and the idea to use homomorphisms as weights was due to Yuan and Zeng \cite{ZengYuanDiscrete}.
Since the idea `weights' was introduced for zero-sum,  in the setting of {\sl finite} abelian groups, many papers were written concerning with the weighted Davenport constant and some other weighted analogies of zero-sum invariants (see \cite{Adhikari3, GrynkiewicMarchan, GrynkiewicIsrael, MarchanOrSaSc, YuanZengEruo} e.g), including weighted Erd\H{o}s-Ginzburg-Ziv constant (see
Chapter 16 in the monograph \cite{Grynkiewiczmono}), the Harborth constant.
Recently, L.E. Marchan, O. Ordaz, I. Santos and W.A. Schmid \cite{MaOrSaSc} established a link between weighted Davenport
constants and problems of linear codes. Moreover, arithmetic investigations of monoids of weighted zero-sum sequences
over a finite abelian group were initiated by Schmid et al. \cite{Boukheche}, and investigated systematically by Geroldinger, Halter-Koch and Zhong \cite{GHZ}.

In the past, almost all zero-sum problems associated with groups were investigated in the scope of {\sl finite} abelian groups,
since the corresponding zero-sum invariants turn to be infinity or  make no sense if they were applied to  infinite abelian groups. While, with the `weights' and the universal zero-sum invariant, we may see some possible interesting questions in the realm of infinite abelian groups in this paper.

Let $F$ and $G$ be abelian groups with the homomorphism group ${\rm Hom}(F, G)$, and let $\Psi$ be a nonempty subset of ${\rm Hom}(F, G)$. For any $\Omega\subset \mathcal F (G)$, we define the $\Psi$-weighted universal zero-sum invariant, denoted ${\mathsf d}_{\Omega; \Psi}(G)$, to be the smallest integer $\ell$ (if it exists, otherwise ${\mathsf d}_{\Omega; \Psi}(G)=\infty$) such that any sequence over $F$ of length $\ell$ has a subsequence which can be mapped into $\Omega$ under some homomorphisms of $\Psi$. With the weighted universal zero-sum invariant ${\mathsf d}_{\Omega; \Psi}(G)$, some zero-sum problems will be motivated in a more general setting---infinite abelian groups. In particular, if $\Omega=\mathcal B (G)\setminus \{\varepsilon\}$ then, ${\mathsf d}_{\Omega; \Psi}(G)$ reduces to the weighted Davenport constant ${\rm D}_{\Psi}(G)$.

In Section 5,  we make a preliminary study of ${\mathsf d}_{\Omega; \Psi}(G)$ by showing that given any  positive integer $t$ and any set
$A\subset \mathcal F (G)$ such that ${\mathsf d}_{A; \Psi}(G)=t$, there exists a minimal set $\Omega$ contained in $A$ such that ${\mathsf d}_{\Omega; \Psi}(G)=t$, which is stated as Theorem \ref{Lemma result by Zorn's Lemma}.
With the weighted Davenport constant ${\rm D}_{\Psi}(G)$ in the setting of infinite abelian groups, among other results, we give the necessary and sufficient conditions such that ${\rm D}_{\Psi}(G)<\infty$ in terms of the weights set $\Psi$ which can be seen in Theorem \ref{Prop finiteness}.  In the concluding Section 6, by using the celebrated Neumann Theorem on Cover Theory for groups, we establish a connection between the existence of a finite cover of an abelian group $G$ by cosets of some given subgroups of $G$, and the finiteness of weighted Davenport constant.

\section {Notation}

The notation and terminologies  on sequences are consistent with \cite{GaoGeroldingersurvey} and [\cite{GH}, Chapter 5]. Let ${\cal F}(G)$
be the free commutative monoid, multiplicatively written, with basis
$G$. By $T\in {\cal F}(G)$, we mean $T$ is a sequence of terms from $G$ which is
unordered, repetition of terms allowed. Denote $[x,y]=\{z\in \mathbb{Z}: x\leq z\leq y\}$ for integers $x,y\in \mathbb{Z}$. Say
$T=a_1a_2\cdot\ldots\cdot a_{\ell}$, where $a_i\in G$ for $i\in [1,\ell]$.
The sequence $T$ can be also denoted as  $\prod\limits_{a\in G}a^{{\rm v}_a(T)},$ where ${\rm v}_a(T)$ is a nonnegative integer and
means that the element $a$ occurs ${\rm v}_a(T)$ times in the sequence $T$. By $|T|$ we denote the length of the sequence, i.e., $|T|=\sum\limits_{a\in G}{\rm v}_a(T)=\ell.$ By $\varepsilon$ we denote the empty sequence in ${\cal F}(G)$ with $|\varepsilon|=0$.
By ${\rm supp}(T)$  we denote the set consisting of all distinct
elements which occur in $T$. We call $T'$
a subsequence of $T$ if ${\rm v}_a(T')\leq {\rm v}_a(T)$, for each element $a\in G,$ denoted by $T'\mid T,$ moreover, we write $T^{''}=T\cdot  T'^{-1}$ to mean the unique subsequence of $T$ with $T'\cdot T^{''}=T$.
Let $\sigma(T)=a_1+\cdots +a_{\ell}$ be the sum of all terms from $T$. In particular, we adopt the convention that $\sigma(\varepsilon)=0_G$.
Let $\Sigma(T)$ be the set consisting of the elements of $G$ that can be represented as a sum of one or more terms from $T$, i.e., $\Sigma(T)=\{\sigma(T'): T' \mbox{ is taken over all nonempty subsequences of }T\}$.
A sequence $T'$ is said to be equal to $T$, denoted $T'=T$, if ${\rm v}_a(T')={\rm v}_a(T)\ \ \mbox{for each element}\ \ a\in G.$ We call $T$ a {\bf zero-sum} sequence if $\sigma(T)=0$, and furthermore, call $T$ a {\bf minimal zero-sum} sequence, if $T$ is a nonempty zero-sum sequence and $T$ contains no nonempty proper ($\neq T$) zero-sum subsequence. We call $T$ a {\bf zero-sum free} sequence, if $T$ contains no nonempty zero-sum subsequence. Let $\mathcal B (G)$ be the submonoid of $\mathcal F (G)$ consisting of
all zero-sum sequences over $G$. Let $\mathcal A (G)$ be the subset of $\mathcal F (G)$ consisting of
all {\sl minimal} zero-sum sequences over $G$.

Let $F$ and $G$ be abelian groups, and let ${\rm Hom}(F, G)$ be the group consisting of all homomorphisms from $F$ to $G$. Let $\Psi$ be a nonempty subset of ${\rm Hom}(F, G)$.
Let $T\in \mathcal{F}(F)$.
Let $V=a_1\cdot\ldots\cdot a_k$ be a subsequence of $T$ ($k\geq 0$), and let $\psi_1,\ldots, \psi_{k}\in \Psi$ (not necessarily distinct). The sequence $\prod\limits_{i\in [1,k]}\psi_i(a_i)$ is called a $\Psi$-subsequence of $T$ (noticing that $\prod\limits_{i\in [1,k]}\psi_i(a_i)$ is a sequence over $G$). In particular, $\prod\limits_{i\in [1,k]}\psi_i(a_i)$ is called a {\bf $\Psi$-zero-sum} subsequence of $T$ if $\sum\limits_{i\in [1,k]}\psi_i(a_i)=0_G$. If $T$ has no nonempty $\Psi$-zero-sum subsequence, we say $T$ is  $\Psi$-zero-sum free.  To make the above definition more precise, we restate them as follows.
Denote
$$\Psi(V)=\{L\in \mathcal{F}(G): L=\psi_1(a_1)\cdot\ldots\cdot \psi_k(a_k) \mbox{ for some } \psi_1,\ldots,\psi_k\in \Psi\}.$$
Any sequence $W\in \bigcup\limits_{V\mid T} \Psi(V)$ is said to be a $\Psi$-subsequence of $T$, in particular, if $\sigma(W)=0_G$ then $W$ is called a $\Psi$-zero-sum subsequence of $T$. If $(\bigcup\limits_{V\mid T} \Psi(V))\cap \mathcal B (G)=\{\varepsilon\}$ then, the sequence $T$ is called $\Psi$-zero-sum free.

Note that for the case $F=G$ and $\Psi$ consisting of only the automorphism $1_G$, the terminology $\Psi$-zero-sum ($\Psi$-zero-sum free) is consistent to zero-sum (zero-sum free).

Let $G$ be a group, and let $\mathcal{S}=\{a_iG_i\}_{i=1}^k$ be a
system of left cosets of subgroups $G_1,\ldots,G_k$ (not necessarily distinct subgroups), where $k$ is called the {\sl size} of  $\mathcal{S}$. Provided that $G=\bigcup\limits_{i=1}^k (a_iG_i)$, we say the system $\mathcal{S}$ is a finite cover of $G$ , and in particular, we call $\mathcal{S}$ an {\sl irredundant} cover of $G$ if $\mathcal{S}$ has no proper subsystems covering $G$.

\section {Intersection of minimal sets in cyclic groups}

In this section, we shall
discuss the general distribution of minimal sets contained in $\mathcal{A}(G)$ in the setting of finite cyclic groups $G$ which can be seen in Theorem \ref{Theorem main result}. As an application of Theorem \ref{Theorem main result}, we could determine $Q_t(G)$ for all $t$ in Theorem \ref{Theorem intersection minimal}.

\begin{theorem}\label{Theorem main result} \ Let $G$ be a finite cyclic group of order $n\geq 3$, and let
$\Gamma=\{\Omega\subset \mathcal{A}(G): \Omega \mbox{ is a minimal set} \}$. Then
$$\bigcap\limits_{\Omega \in \Gamma} \Omega=
               \{a^{{\rm ord}(a)}: a\in G\}\cup \{g^{k}\cdot (-kg):  g\in G, {\rm ord}(g)=n, 1\leq k\leq n-2\}\cup X$$
where $X=\{g^2\cdot [(\frac{n}{2}-1)g]^2: g\in G, {\rm ord}(g)=n\}$ for $4\mid n$ and $n\geq 8$, and $X=\emptyset$ for otherwise.
\end{theorem}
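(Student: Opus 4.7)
The plan is first to translate the set-theoretic claim into a single existence question, then to establish the two containments separately. By Remark~\ref{remark}, $\mathsf d_\Omega(G)$ is monotonically non-increasing as $\Omega$ grows, so $\mathcal A(G)$ can be thinned down to a minimal set contained in $\mathcal A(G)\setminus\{V\}$ precisely when $\mathsf d_{\mathcal A(G)\setminus\{V\}}(G)=n$. Consequently $V\in\bigcap_{\Omega\in\Gamma}\Omega$ if and only if $\mathsf d_{\mathcal A(G)\setminus\{V\}}(G)>n$, equivalently if and only if there exists $T\in\mathcal F(G)$ with $|T|=n$ whose only minimal zero-sum subsequence is $V$. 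I will call such a $V$ \emph{exclusive}; the theorem then asserts that the exclusive minimal zero-sum sequences are exactly the union on the right.

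For the inclusion $\supseteq$ I exhibit explicit witnesses. For $V=a^{\mathrm{ord}(a)}$ take $T=a^n$: every sub-multiset is $a^j$, and $ja=0$ iff $\mathrm{ord}(a)\mid j$, so the only minimal zero-sum sub-multiset is $V$. For $V=g^k\cdot(-kg)$ with $g$ a generator and $1\le k\le n-2$ take $T=g^{n-1}\cdot(-kg)$: a sub-multiset $g^a(-kg)^b$ with $b\in\{0,1\}$ is zero-sum iff $a\equiv bk\pmod n$, so only $\varepsilon$ and $V$ occur. For $V=g^2\cdot h^2$ with $h=(\frac{n}{2}-1)g$, $g$ a generator, $4\mid n$ and $n\ge 8$, take $T=g^{n/2}h^{n/2}$: the condition $a+b(\frac{n}{2}-1)\equiv 0\pmod n$ splits by parity of $b$, with odd $b$ forcing $a\equiv b+\frac{n}{2}>\frac{n}{2}$ (impossible) and even $b=2j$ forcing $a=2j$, so each zero-sum sub-multiset is of the form $V^j$ and is minimal only for $j=1$.

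For the inclusion $\subseteq$ I show that any exclusive $V$ appears in the right-hand side. The first step is that $\mathrm{supp}(T)$ must contain a generator of $\mathbb{Z}_n$, for otherwise $\mathrm{supp}(T)\subseteq H$ for some proper subgroup $H$ of order $m\le n/2$, and $|T|=n\ge 2m=2\mathrm{D}(H)$ forces $T$ to contain two disjoint nonempty zero-sum sub-multisets, hence two distinct minimal zero-sum subsequences. Identify $\mathbb{Z}_n=\langle g_0\rangle$ for such a generator $g_0\in\mathrm{supp}(T)$ and analyse $V$ by support size. Support size $1$ gives $V=a^{\mathrm{ord}(a)}$. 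For support size $2$, writing $V=g_1^{k_1}g_2^{k_2}$ with $g_2=s\cdot g_1$, I examine the lattice $L=\{(a,b)\in\mathbb{Z}^2:a+bs\equiv 0\pmod n\}$; requiring that inside the multiplicity box of $T$ the only lattice points off the ray $\mathbb{Z}_{\ge 0}(k_1,k_2)$ be the origin will force $V$ to be of type~2 or of type~3. For support size $\ge 3$, Olson's inequality $|\Sigma(W)\cup\{0\}|\ge|W|+1$ applied to $W=T\cdot V^{-1}$, combined with the avoidance condition forced by the absence of other minimal zero-sum subsequences, yields a contradiction from the lower bound on $|\Sigma^{*}(V)|$ produced by three or more distinct terms.

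The hardest step is the support-size-$2$ analysis, because the genuine overlap $\mathrm{supp}(V)\cap\mathrm{supp}(T\cdot V^{-1})$ present in the type~2 and type~3 witnesses obstructs the clean Olson counting available in the support-size-$\ge 3$ case. To handle it I would work directly with the lattice $L$ above (generated by $(n,0)$ and a primitive vector in the direction of $(k_1,k_2)$), tracking which lattice points lie in the $T$-multiplicity box; the classification of Olson-tight zero-sum free sequences, namely constants $g^\ell$ for a generator $g$, then pins down the shape of $W$ and hence of $V$. The remaining near-miss configurations---$(k_1,k_2)=(2,2)$ with $s\ne\frac{n}{2}-1$, or $4\nmid n$, or $n<8$---require an explicit construction of a second MZSS in every candidate $T$, and will be the most computation-heavy step.
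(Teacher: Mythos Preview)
Your reformulation of the problem---``$V$ lies in the intersection iff some $T$ of length $n$ has $V$ as its only minimal zero-sum subsequence''---is exactly right, and your witnesses for the inclusion $\supseteq$ match the paper's. The difficulties are all in $\subseteq$, and here your outline has genuine gaps.

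First, the claim that $\mathrm{supp}(T)$ must contain a generator is false as stated: for $V=a^{\mathrm{ord}(a)}$ with $a$ not a generator, the witness $T=a^n$ has $\mathrm{supp}(T)=\{a\}$. More importantly, even restricting to $|\mathrm{supp}(V)|\ge 2$, your argument ``two disjoint nonempty zero-sum sub-multisets, hence two distinct minimal zero-sum subsequences'' is invalid: disjoint copies of $V$ itself would contradict nothing. (The conclusion is in fact true for $|\mathrm{supp}(V)|\ge 2$, but via a different route: removing \emph{all} copies of any $c\in\mathrm{supp}(V)$ leaves a zero-sum free sequence, and a length count forces that sequence to be long enough to invoke structural information.)

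The deeper issue is that Olson's inequality $|\Sigma(W)\cup\{0\}|\ge |W|+1$ is far too weak for either the support-$2$ or the support-$\ge 3$ case. The paper does \emph{not} split by $|\mathrm{supp}(V)|$; instead it fixes $c\in\mathrm{supp}(V)$ with $\mathrm v_c(T)$ minimal, observes that $Tc^{-\mathrm v_c(T)}$ is zero-sum free of length $\ge n/2$, and applies the Savchev--Chen/Yuan structure theorem (Lemma~\ref{SachenChen}): there is a generator $g$ with $\sum_{a\mid Tc^{-\mathrm v_c(T)}}\mathrm{Ind}_g(a)<n$ and, crucially, the partial sums of the index sequence cover the \emph{full} interval $[1,\sum\mathrm{Ind}_g(a)]$. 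This interval property is what lets one manufacture alternative minimal zero-sum subsequences $c^m\cdot L$ and compare them; it is doing work that neither Olson's inequality nor a two-dimensional lattice picture can supply. The subsequent analysis of the minimal $m$ with $m\,\mathrm{Ind}(c)+\sum\mathrm{Ind}(a)\ge n$---showing $m<3$, ruling out $m=1$ via Lemmas~\ref{lemma 1-smooth Integer sequences} and~\ref{Lemma folklore}, and for $m=2$ forcing $\mathrm{Ind}(c)=\tfrac{n}{2}-1$---is where the exceptional set $X$ emerges, and none of this is visible from your lattice sketch. You should build the argument around Lemma~\ref{SachenChen} rather than Olson.
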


It is well known that ${\rm D}(G)=n$ (see Lemma \ref{Lemma Davenport precise value}) for a cyclic group $G$ of order $n$. Since $Q_t(G)$ defines only for $t\geq {\rm D}(G)$, we give the following result on $Q_t(G)$ for all $t\geq n$.

\begin{theorem}\label{Theorem intersection minimal} \ Let $G$ be a finite cyclic group of order $n\geq 3$. Then

(i) $Q_{n}(G)=\{a^{{\rm ord}(a)}: a\in G, {\rm ord}(a)\in \{1, \frac{n}{2}, n\}\}\cup \{g^{k}\cdot (-kg):  g\in G, {\rm ord}(g)=n, 1\leq k\leq n-2\}$;

(ii) $Q_t(G)=\{a^{{\rm ord}(a)}: a\in G, {\rm ord}(a)=n\}$ for each $t\in [n+1,2n-1]$;

(iii) $Q_t(G)=\emptyset$ for any $t\geq 2n$.
\end{theorem}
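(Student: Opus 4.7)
The plan is to translate $Q_t(G)$ into a concrete avoidance condition: $S\in Q_t(G)$ exactly when no $\Omega\subset\mathcal B(G)\setminus\{\varepsilon,S\}$ has ${\mathsf d}_\Omega(G)=t$. Setting $f(S):={\mathsf d}_{\mathcal B(G)\setminus\{\varepsilon,S\}}(G)$, the implication $f(S)>t\Rightarrow S\in Q_t(G)$ is automatic, since any $\Omega$ omitting $S$ satisfies ${\mathsf d}_\Omega(G)\ge f(S)$; and $f(S)$ equals one plus the maximum length of a sequence whose only nonempty zero-sum subsequence is $S$. The main work is to compute $f(S)$ for each candidate $S$, and, whenever $f(S)\le t$, to exhibit an explicit $\Omega$ witnessing $S\notin Q_t(G)$.

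For part (i), Theorem~\ref{Theorem main result} gives $Q_n(G)\subset\bigcap_{\Omega\in\Gamma}\Omega$, so the task reduces to checking each element of that explicit set. Each surviving candidate admits a length-$n$ witness $W$ with unique nonempty zero-sum subsequence equal to $S$: $W=a^n$ for $S=a^n$ with ${\rm ord}(a)=n$; $W=0\cdot V$ with $V$ any zero-sum-free sequence of length $n-1$ for $S=0$; $W=a^{n-1}\cdot b$ for any $b\in G\setminus\langle a\rangle$ when $S=a^{n/2}$ and ${\rm ord}(a)=n/2$; and $W=g^{n-1}\cdot(-kg)$ for $S=g^k\cdot(-kg)$ with ${\rm ord}(g)=n$ and $1\le k\le n-2$. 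To exclude the remaining candidates (the $a^d$ with $d={\rm ord}(a)\notin\{1,n/2,n\}$ and the elements of $X$) I would use a quotient argument: if $W$ has unique nonempty zero-sum subsequence $a^d$ and we write $W=a^{d+v}\cdot V'$ with $V'$ the part of $W$ containing no $a$, then $v\le d-1$ (otherwise $a^{2d}$ is a second zero-sum subsequence) and $\Sigma(V')\cap\langle a\rangle=\emptyset$, so the image of $V'$ in $G/\langle a\rangle\cong\mathbb{Z}_{n/d}$ is zero-sum-free, forcing $|V'|\le n/d-1$; combining with $|V'|=n-d-v$ gives $n\le 2d$, whence $d\in\{n/2,n\}$. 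A parallel but more delicate sum-set computation inside $\mathbb{Z}_n$ handles the $X$-elements when $4\mid n$ and $n\ge 8$.

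For part (ii), the witness $W=a^{2n-1}$ has unique nonempty zero-sum subsequence $a^n$, so $f(a^n)\ge 2n>t$ for every $t\le 2n-1$, giving the inclusion $\{a^n:{\rm ord}(a)=n\}\subset Q_t(G)$. For the reverse, one first shows $f(S)\le n+1$ for every other $S\in\mathcal B(G)\setminus\{\varepsilon\}$ by extending the quotient argument to length $n+1$ (no length-$(n+1)$ sequence has $S$ as its unique nonempty zero-sum subsequence), so that $\mathcal B(G)\setminus\{\varepsilon,S\}$ already satisfies ${\mathsf d}\le n+1\le t$; then one trims this set down to realize ${\mathsf d}_\Omega(G)=t$ exactly. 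For part (iii), the Erd\H os--Ginzburg--Ziv theorem combined with ${\rm D}(G)=n$ shows every length-$2n$ sequence in cyclic $G$ has two disjoint nonempty zero-sum subsequences (extract one of length $n$ by EGZ; the complement has length $n$ and hence contains another), so $f(S)\le 2n$ for every $S$, and the same trimming procedure produces an $\Omega$ with ${\mathsf d}_\Omega(G)=t$ avoiding any prescribed $S$ for each $t\ge 2n$.

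The main obstacle is the achievability step underlying (ii) and (iii): ensuring that by trimming $\mathcal B(G)\setminus\{\varepsilon,S\}$ one can hit the target value $t$ exactly. I expect this to require a single-element-removal lemma bounding how ${\mathsf d}_\Omega(G)$ can jump, combined with care in the order of removal; alternatively one produces for each $t$ an explicit $\Omega$ built from the generator-powers $\{a^n:{\rm ord}(a)=n\}$ augmented by suitably long non-minimal zero-sum sequences. A secondary technical point is the $X$-case in part (i), whose exclusion does not reduce to a clean subgroup-quotient argument and needs a direct sum-set computation.
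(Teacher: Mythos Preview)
Your overall plan is on the right track and matches the paper's in spirit: characterize membership in $Q_t(G)$ via witness sequences, then compute. But the step you flag as the ``main obstacle'' --- trimming $\mathcal B(G)\setminus\{\varepsilon,S\}$ to hit ${\mathsf d}_\Omega(G)=t$ exactly --- is a genuine gap in your write-up, and it is precisely the step the paper never has to perform. The paper invokes Lemma~\ref{lemma criterion}(ii) (which is Lemma~5.7 of \cite{GaoHong1}): for every $t\ge{\rm D}(G)$, one has $V\in Q_t(G)$ \emph{if and only if} some length-$t$ sequence has $V$ as its unique nonempty zero-sum subsequence. In your notation this is the full biconditional $S\in Q_t(G)\Leftrightarrow f(S)>t$, so once you show $f(S)\le t$ you are done --- no trimming, no single-element-removal lemma. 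Your one-way implication is correct but insufficient; you should either cite this result or reprove it (the nontrivial direction amounts to showing that whenever $f(S)\le t$ there is always \emph{some} $\Omega$ avoiding $S$ with ${\mathsf d}_\Omega(G)=t$, and this does not follow from naive trimming since removals can make ${\mathsf d}_\Omega$ jump by more than one).

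A second simplification you are missing for part~(ii): the paper uses the monotonicity $Q_{t+1}(G)\subset Q_t(G)$ (Lemma~\ref{Lemma Qk}, also from \cite{GaoHong1}). Combined with part~(i), this reduces the exclusion step to showing that only the four specific shapes in $Q_n(G)\setminus\{a^n:{\rm ord}(a)=n\}$ fail to lie in $Q_{n+1}(G)$; you need not treat a general $S$. Your proposed ``extend the quotient argument to length $n+1$'' is both vaguer and much broader than necessary, and the quotient trick is tailored to $S=a^d$ --- it does not obviously handle $S=g^k\cdot(-kg)$, where the paper instead uses Lemma~\ref{Lemma Graham conjecture} on distinct-length zero-sum subsequences. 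Incidentally, your quotient argument for excluding $a^d$ with $d\notin\{1,n/2,n\}$ from $Q_n(G)$ is clean and correct, and is genuinely different from the paper's route through the Savchev--Chen structure theorem (Lemma~\ref{SachenChen}); but you still owe the $X$-case, which the paper dispatches by a short ad~hoc computation that your sketch does not supply.
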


By Theorem \ref{Theorem main result}, we can also derive the following.

\begin{cor}\label{Cor A(G) is not minimal} \ Let $G$ be a finite cyclic group of order $n\geq 3$. Then $\mathcal{A}(G)$ is minimal with respect to ${\rm D}(G)$ if and only if $n\leq 5$.
\end{cor}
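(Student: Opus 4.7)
The plan is to rephrase minimality of $\mathcal{A}(G)$ in terms of the intersection computed in Theorem \ref{Theorem main result}, and then verify the two ranges $n\leq 5$ and $n\geq 6$ separately.

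First I would establish the equivalence that $\mathcal{A}(G)$ is minimal with respect to ${\rm D}(G)$ if and only if $\bigcap_{\Omega\in\Gamma}\Omega=\mathcal{A}(G)$, where $\Gamma$ is as in Theorem \ref{Theorem main result}. The forward direction is immediate from Definition \ref{Definition minimal set}, since minimality of $\mathcal{A}(G)$ forces $\Gamma=\{\mathcal{A}(G)\}$. For the reverse, one invokes Theorem \ref{Lemma result by Zorn's Lemma}: if some proper $\Omega'\subsetneq\mathcal{A}(G)$ had ${\mathsf d}_{\Omega'}(G)={\rm D}(G)$, then a minimal set $\Omega\subseteq\Omega'$ would be an element of $\Gamma$ strictly contained in $\mathcal{A}(G)$, contradicting the assumed equality of the intersection with $\mathcal{A}(G)$.

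For $n\geq 6$, I would exhibit the counterexample $T=1\cdot 2\cdot(n-3)\in\mathcal{F}(\mathbb{Z}_n)$: its six nonempty proper subsums take values in $\{1,2,3,n-3,n-2,n-1\}$, all nonzero modulo $n$ once $n\geq 4$, so $T\in\mathcal{A}(\mathbb{Z}_n)$; and for $n\geq 6$ the three summands $1$, $2$, $n-3$ are pairwise distinct, giving $|{\rm supp}(T)|=3$. Each of the three families appearing in the intersection of Theorem \ref{Theorem main result}---namely $a^{{\rm ord}(a)}$, $g^k\cdot(-kg)$, and (when $4\mid n$ and $n\geq 8$) $g^2\cdot[(\tfrac{n}{2}-1)g]^2$---consists of sequences of support size at most two, so $T$ cannot lie in the intersection, and the equivalence above yields that $\mathcal{A}(G)$ is not minimal.

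For $n\in\{3,4,5\}$, I would show that $\mathcal{A}(\mathbb{Z}_n)$ coincides with the intersection from Theorem \ref{Theorem main result} (the reverse inclusion being automatic). The driving observation is that every $T\in\mathcal{A}(\mathbb{Z}_n)$ has $|{\rm supp}(T)|\leq 2$: in $\mathbb{Z}_3$ there are only two nonzero elements, while in $\mathbb{Z}_4$ and $\mathbb{Z}_5$ any three pairwise distinct nonzero residues contain an antipodal pair $\{a,-a\}$, which would itself be a proper zero-sum subsequence of $T$. The case $|{\rm supp}(T)|=1$ directly gives $T=a^{{\rm ord}(a)}$; in the case ${\rm supp}(T)=\{a,b\}$, noting that any size-two subset of nonzero elements of $\mathbb{Z}_n$ contains a generator for $n\leq 5$, one takes $a$ to be a generator and writes $b=-ka$, after which the single congruence $ia+jb\equiv 0\pmod n$ together with minimality pins down $T=a^k\cdot(-ka)$ with $k\in[1,n-2]$. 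I expect this last multiplicity analysis, ruling out $j\geq 2$ in $a^i\cdot b^j$, to be the main technical step, but it reduces to a finite case check over the tiny values $n\leq 5$.
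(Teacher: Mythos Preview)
Your proposal is correct and follows essentially the same approach as the paper: both derive the corollary from Theorem~\ref{Theorem main result} by exhibiting the same minimal zero-sum sequence $g\cdot(2g)\cdot((n-3)g)$ of support size three for $n\geq 6$, and by verifying for $n\leq 5$ that the intersection equals $\mathcal{A}(G)$. Two small remarks: you could cite Lemma~\ref{Lemma existence for finite case} rather than the heavier Theorem~\ref{Lemma result by Zorn's Lemma} for the existence of a minimal subset, and in your $|{\rm supp}(T)|=2$ analysis for $n=5$ be aware that the generator $g$ witnessing the form $g^k\cdot(-kg)$ may need to be the element with higher multiplicity (e.g.\ $T=1\cdot 2^2$ in $\mathbb{Z}_5$ requires $g=2$), though as you note this is absorbed into the finite case check.
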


\begin{proof} For $n\leq 5$, we can show the conclusion by checking that the set $\bigcap\limits_{\Omega \in \Gamma} \Omega$ given in Theorem \ref{Theorem main result} is equal to $\mathcal{A}(G)$. Suppose $n\geq 6$. Take an generator $g$ of $G$, and take the minimal sequence $V=g\cdot (2g) \cdot ((n-3)g)$. It is easy to see that $V\notin \bigcap\limits_{\Omega \in \Gamma} \Omega$, which implies that $\mathcal{A}(G)$ is not a minimal set.
\end{proof}

To prove Theorems \ref{Theorem main result} and \ref{Theorem intersection minimal}, some lemmas are necessary.

\begin{lemma}\label{Lemma existence for finite case} Let $G$ be a finite abelian group, and let $A\subset \mathcal{B}(G)$ such that $d_{A}(G)=t<\infty$. Then there exists a minimal set $\Omega\subset A$ with respect to $t$.
\end{lemma}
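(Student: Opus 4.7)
The plan is to reduce $A$ to a finite subset of itself without changing the invariant, and then to extract a minimal $\Omega$ by choosing a subset of smallest cardinality that still realizes the value $t$.

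First I would observe that sequences in $A$ of length exceeding $t$ are irrelevant for computing $\mathsf d_A(G)$: any subsequence of a sequence of length $t$ has length at most $t$. Setting $A' = \{W \in A : |W| \leq t\}$, this observation yields $\mathsf d_{A'}(G) \leq t$, since for every sequence $T$ over $G$ of length $t$ the hypothesis produces a subsequence $W \in A$ which necessarily satisfies $|W|\leq t$ and hence lies in $A'$. The reverse inequality $\mathsf d_{A'}(G) \geq \mathsf d_A(G) = t$ is immediate from $A' \subset A$ and Remark \ref{remark}. Therefore $\mathsf d_{A'}(G) = t$.

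The key reduction is now that $G$ is finite, so there are only finitely many sequences over $G$ of length at most $t$; in particular $A'$ is finite. Let
$$\mathcal H = \{\Omega \subset A' : \mathsf d_\Omega(G) = t\}.$$
Since $A' \in \mathcal H$, the family $\mathcal H$ is non-empty, and as it lies inside the finite power set of $A'$ I can pick $\Omega \in \mathcal H$ of minimum cardinality. Any proper subset $\Omega' \subsetneq \Omega$ satisfies $|\Omega'| < |\Omega|$, so the minimality of $|\Omega|$ forces $\Omega' \notin \mathcal H$, i.e.\ $\mathsf d_{\Omega'}(G) \neq t$. Thus $\Omega \subset A' \subset A$ is a minimal set with respect to $t$, as required.

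There is essentially no obstacle beyond the initial observation that only subsequences of length $\le t$ matter. All the work is being done by the finiteness of $G$, which forces $A'$ to be finite and so permits a direct cardinality-minimisation argument; the analogous statement for infinite $G$ (namely Theorem \ref{Lemma result by Zorn's Lemma}) loses this finiteness and must instead appeal to Zorn's lemma on the poset of subsets of $A$ with invariant $t$ ordered by reverse inclusion.
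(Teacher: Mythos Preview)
Your proof is correct and follows essentially the same approach as the paper: reduce to the finite subset $A'=\{W\in A:|W|\le t\}$, verify $\mathsf d_{A'}(G)=t$ using Remark~\ref{remark} and the observation that subsequences of a length-$t$ sequence have length at most $t$, and then extract a minimal $\Omega$ from the finite set $A'$. The only difference is cosmetic---you spell out the cardinality-minimisation step explicitly, whereas the paper simply asserts that a minimal subset exists once $A'$ is finite.
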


\begin{proof} Let $A'=\{V\in A: |V|\leq t\}$. Since $A'\subset A$, it follows from Remark \ref{remark} that $d_{A'}(G)\geq d_{A}(G)=t$. On the other hand, let $T\in \mathcal{F}(G)$ be an arbitrary sequence of length $t$. Since $d_{A}(G)=t$, it follows that $T$ contains a subsequence $W$, with $W\in A$. Since $|W|\leq |T|=t$, it follows that $W\in A'$. By the arbitrariness of $T$, we conclude that $d_{A'}(G)\leq t$, and so $$d_{A'}(G)=t.$$
Notice that $A'$ is finite. Then we can find a minimal set $\Omega$ with respect to $t$ which is contained in $A'$, and definitely contained in $A$, completing the proof.
\end{proof}

\begin{lemma}\label{lemma criterion}  Let $G$ be a finite abelian group, and let $V$ be a nonempty zero-sum sequence over $G$. Then we have the following conclusions.

(i) $V$ belongs to every minimal set contained in $\mathcal{A}(G)$  if and only if there exists a sequence $T\in \mathcal{F}(G)$ of length ${\rm D}(G)$ such that every minimal zero-sum sequence of $T$ is equal to $V$;

(ii) For any $t\geq {\rm D}(G)$, then $V\in Q_t(G)$ if and only if there exists a sequence $T\in \mathcal{F}(G)$ of length $t$ such that every nonempty zero-sum sequence of $T$ is equal to $V$.
\end{lemma}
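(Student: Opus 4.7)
My plan is to handle each part by splitting it into a direct ``if'' direction and a contrapositive ``only if'' direction, with Lemma \ref{Lemma existence for finite case} supplying the essential existence step in the latter. For the ``if'' direction of either part, suppose the witness $T$ exists, and fix any $\Omega$ of the required type (a minimal set in $\mathcal A(G)$ for (i); a subset of $\mathcal B(G)$ with $d_\Omega(G)=t$ for (ii)). Since $d_\Omega(G)=|T|$, the sequence $T$ admits a subsequence $W\in\Omega$. In (i), $W\in\mathcal A(G)$ is itself a minimal zero-sum sequence over $G$, hence automatically a minimal zero-sum subsequence of $T$, so $W=V$ by hypothesis. Part (ii) is parallel: under the convention $\varepsilon\notin\Omega$ recorded after Definition \ref{definition domenga}, the element $W$ is a nonempty zero-sum subsequence of $T$, so again $W=V$. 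Either way $V\in\Omega$, giving the required conclusion.

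For the ``only if'' directions I argue the contrapositive and then invoke Lemma \ref{Lemma existence for finite case}. In (i), if no witness $T$ of length $D(G)$ exists, then every such sequence has a minimal zero-sum subsequence different from $V$, so $A':=\mathcal A(G)\setminus\{V\}$ satisfies $d_{A'}(G)\leq D(G)$; the opposite inequality follows from Remark \ref{remark} combined with the identity $d_{\mathcal A(G)}(G)=D(G)$ recalled in the introduction, giving equality. Lemma \ref{Lemma existence for finite case} then produces a minimal set $\Omega\subset A'\subset\mathcal A(G)$ that avoids $V$, settling (i). Applying the same pattern to $A:=\mathcal B(G)\setminus\{\varepsilon,V\}$ in part (ii) yields $d_A(G)\leq t$ under the corresponding hypothesis, and in the clean case $d_A(G)=t$ the set $A$ itself qualifies as an $\Omega$ with $d_\Omega(G)=t$ and $V\notin\Omega$, so $V\notin Q_t(G)$.

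The main obstacle I anticipate is the residual sub-case $d_A(G)<t$ of (ii), where one must still produce some $\Omega\subsetneq A$ with $d_\Omega(G)=t$. My approach is to choose a length-$(t-1)$ sequence $T'\in\mathcal F(G)$ whose nonempty zero-sum subsequences lying in $A$ are sufficiently restricted, and set $\Omega:=A\setminus\{W\in A:W\mid T'\}$: by construction $T'$ admits no subsequence in $\Omega$, so $d_\Omega(G)\geq t$, and the remaining task is to verify that $\Omega$ still hits every length-$t$ sequence. This last verification is the technical heart of the argument and depends on coordinating the choice of $T'$ with the local structure of $V$ so that every length-$t$ sequence has at least one nonempty zero-sum subsequence that lies in $A$ but does not divide $T'$.
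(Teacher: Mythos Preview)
Your treatment of part (i) is complete and coincides with the paper's argument line for line: the paper also sets $A=\mathcal A(G)\setminus\{V\}$, checks $d_A(G)={\rm D}(G)$ via Remark~\ref{remark}, and then applies Lemma~\ref{Lemma existence for finite case}. For part (ii) the paper gives no argument of its own but simply cites \cite{GaoHong1}, Lemma~5.7, so there is nothing in the paper to compare your approach against.

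That said, your plan for (ii) has a genuine gap in the sub-case $d_A(G)<t$, and the fix you sketch does not work as stated. Setting $\Omega:=A\setminus\{W\in A:W\mid T'\}$ certainly forces $d_\Omega(G)\ge t$, but it can \emph{overshoot}. Take $G=\mathbb Z_3$, $V=0$, so $A=\mathcal B(G)\setminus\{\varepsilon,0\}$ and one checks $d_A(G)=4$; now let $t=5$ and try $T'=1^4$. The only element of $A$ dividing $T'$ is $1^3$, so $\Omega=\mathcal B(G)\setminus\{\varepsilon,0,1^3\}$, and then the length-$5$ sequence $1^5$ has no subsequence in $\Omega$, giving $d_\Omega(G)\ge 6>t$. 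A careful choice of $T'$ (here $T'=0^2\cdot 1^2$) does happen to work in this example, but you have given no mechanism for producing such a $T'$ in general, and ``coordinating $T'$ with the local structure of $V$'' is not a side remark --- it is the entire content of the missing step.

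A cleaner way to dispose of the sub-case is to use the monotonicity $Q_{t+1}(G)\subset Q_t(G)$ recorded as Lemma~\ref{Lemma Qk}: if $s:=d_A(G)<t$, then $A$ itself witnesses $V\notin Q_s(G)$, and iterating Lemma~\ref{Lemma Qk} gives $Q_t(G)\subset Q_s(G)$, whence $V\notin Q_t(G)$. This closes the gap in one line, but since both Lemma~\ref{Lemma Qk} and the present statement are imported from \cite{GaoHong1}, you should verify in that source that Lemma~\ref{Lemma Qk} is established independently of Lemma~5.7 before invoking it here.
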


\begin{proof} (i) To prove the necessity, we assume to the contrary, that $V$ belongs to every minimal set contained in $\mathcal{A}(G)$, but every sequence $T$ of length ${\rm D}(G)$  must contain a minimal zero-sum subsequence which is not equal to $V$. Let $A= \mathcal{A}(G)\setminus \{V\}$. By the hypothesis, we have that $d_{A}(G)\leq {\rm D}(G)$. By Remark \ref{remark}, we have that $d_{A}(G)\geq d_{\mathcal{A}(G)}(G)={\rm D}(G)$, and so $d_{A}(G)={\rm D}(G)$. By Lemma \ref{Lemma existence for finite case}, we have that $A$
contains a minimal set, but $V\notin A$, we derive a contradiction.

The sufficiency follows from the definition.

(ii) is just Lemma 5.7 in \cite{GaoHong1}.
\end{proof}

\begin{lemma}\label{lemma 1-smooth Integer sequences}  \ Let $T=\prod\limits_{i\in [1,\ell]} h_i$ be a sequence of positive integers of length $\ell$ such that  $\sum\limits_{i=1}^{\ell} h_i\leq 2\ell -3$.  Suppose that there exists $t\in [1,\ell]$ such that $h_t>1$. Then for each $x\in [h_t,\sum\limits_{i\in [1,\ell]\setminus \{t\}} h_i]$, we can find two subsequences $V_1,V_2$ of $T$ with distinct lengths such that $\sigma(V_1)=\sigma(V_2)=x$.
\end{lemma}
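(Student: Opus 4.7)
The plan starts from the counting consequence of $\sum_{i=1}^{\ell} h_i \le 2\ell-3$. If $m$ denotes the number of indices $i$ with $h_i\ge 2$, then $\sum h_i \ge (\ell-m)+2m=\ell+m$, forcing $m\le \ell-3$. Consequently $T$ contains at least $k_1:=\ell-m\ge 3$ terms equal to $1$, and this surplus is the engine of the argument.

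I would split on whether $x\le k_1$. In the easy case $x\le k_1$, take $V_1=1^x$ (length $x$) and $V_2=h_t\cdot 1^{x-h_t}$ (length $x-h_t+1$); both are legitimate subsequences, both have sum $x$, and their lengths differ by $h_t-1\ge 1$. For the nontrivial case $x>k_1$ I would first establish a smoothness sublemma stating that under $\sigma(T)\le 2\ell-3$ the subset-sum set of $T$ equals $[0,\sigma(T)]$. This is proved by applying the Eggleton--Erd\H{o}s criterion $b_i\le 1+\sum_{j<i}b_j$ to the sorted sequence $b_1\le \cdots\le b_\ell$: the only substantive step is for the non-ones $a_1\le\cdots\le a_m$, where assuming the criterion fails at index $k$ (i.e.\ $a_k\ge k_1+2+\sum_{j<k}a_j$) and combining $a_j\ge 2$ for $j<k$ with $a_j\ge a_k$ for $j>k$ forces $\sum_{j=1}^m a_j\ge k_1+2m+2k-2$, contradicting the bound $\sum a_j\le \ell+m-3=k_1+2m-3$.

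Granting the sublemma, every subsequence $V$ of $T$ with $\sigma(V)=x$ takes the form $V=1^a\cdot J$, where $J$ is a sub-multiset of the non-ones $N$ of $T$, with $\sigma(J)=x-a$ and $a\in[\max(0,x-\sigma(N)),\min(k_1,x)]$. Its length is $|V|=x-\sum_{h\in J}(h-1)$, so the lemma reduces to exhibiting two admissible pairs $(a,J)$ with distinct values of $\sum_{h\in J}(h-1)$. For $x\in(k_1,s]$ a natural construction is $V=1^{k_1}\cdot J$ (using all $k_1$ ones, with $\sigma(J)=x-k_1$) together with a swap-produced $V'=1^{k_1-h'}\cdot J\cdot h'$ for some $h'\in N\setminus J$ of value $h'\le k_1$; this changes the length by $h'-1\ge 1$ while preserving the sum. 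Because $x\le s<\sigma(T)$, $J$ is a proper sub-multiset of $N$, so a candidate $h'$ exists.

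The main obstacle I expect is the subcase in which every element of $N\setminus J$ exceeds $k_1$. The bound $\sigma(N)\le k_1+2m-3$ limits how many such large elements can coexist, and the remaining bookkeeping is a short case split according to whether $h_t\in J$ or $h_t\in N\setminus J$: in each branch the second subsequence of distinct length is produced by a modified swap that moves $h_t$ across the partition while adjusting the number of ones used. At every step the reserve $k_1\ge 3$ is exactly the budget making the swap applicable, which is why the hypothesis $\sum h_i\le 2\ell-3$ is sharp for the conclusion.
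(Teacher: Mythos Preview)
Your sublemma (that the subset-sum set of $T$ is the full interval $[0,\sigma(T)]$) is correct and cleanly argued. The gap is in what follows for $x>k_1$: you write $V=1^{k_1}\cdot J$ with $J$ a sub-multiset of the non-ones $N$ and $\sigma(J)=x-k_1$, but the sublemma concerns subset sums of $T$, not of $N$; there is no reason $N$ by itself should hit every value in $[0,\sigma(N)]$. A concrete instance within the hypotheses: $T=1^4\cdot 2\cdot 2$ (so $\ell=6$, $\sigma(T)=8\le 9=2\ell-3$, $k_1=4$, $N=\{2,2\}$), $h_t=2$. For $x=5$ you need $J\subseteq\{2,2\}$ with $\sigma(J)=1$, which does not exist. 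The lemma still holds for this $x$ (take $1^3\cdot 2$ and $1\cdot 2\cdot 2$), but your construction does not produce it. Your final paragraph presupposes that $J$ exists and then names a different obstacle (all elements of $N\setminus J$ exceeding $k_1$), which you then handwave as ``a short case split''; neither issue is actually resolved.

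The paper avoids this by never trying to realise $x-k_1$ as a subset sum of $N$ alone. It sorts the non-ones as $h_{s+1}\le\cdots\le h_\ell$ (here $s$ is the number of ones, your $k_1$), extracts from $\sigma(T)\le 2\ell-3$ the inequality $s\ge h_{\ell-1}+h_\ell-1$ when there are at least two non-ones, and then greedily takes the longest initial block $h_{s+1},\dots,h_m$ with $\sum_{i=s+1}^m h_i\le x$. The two subsequences are this block and the block with one endpoint removed, each padded with ones; the inequality on $s$ is exactly what certifies that enough ones are available for both paddings. This greedy choice of $J$ is the missing idea that makes your swap go through.
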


\begin{proof} We may assume that $h_i=1$, for all $i\in [1,s]$, and that
\begin{equation}\label{equation icreasing order of si}
2\leq h_{s+1}\leq h_{s+2}\leq \cdots \leq h_{\ell} \ \mbox{ where } s\in [0,\ell-1].
\end{equation}
Since $\sum\limits_{i=1}^{\ell} h_i\leq 2\ell -3$, it follows from the straightforward calculations that either (i) $s+1=\ell$ and $s\geq h_{\ell}+1$; or (ii) $s+1<\ell$ and $s\geq h_{\ell-1}+h_{\ell}-1$. To prove the conclusion, it suffices to assume  $t=s+1$ and  $$x\in [h_{s+1}, s+\sum\limits_{i\in [s+2,\ell]} h_i].$$
If (i) holds, then $1^x$ and $1^{x-h_{s+1}} h_{s+1}$
 are the desired subsequences of $T$ with distinct lengths, done. Hence, it remains to consider the case of (ii) holding. Let $m$ be the largest integer of $[s+1,\ell]$ such that
$\sum\limits_{i=s+1}^{m} h_i\leq x.$

Suppose $m<\ell$. By \eqref{equation icreasing order of si} and the maximality of $m$, we have
$x-\sum\limits_{i=s+1}^{m-1}h_i\leq (-1+\sum\limits_{i=s+1}^{m+1}h_i)-\sum\limits_{i=s+1}^{m-1}h_i=-1+h_m+h_{m+1}\leq -1+h_{\ell-1}+h_{\ell}\leq s$ (perhaps $m$ is equal to $s+1$ and then we admit $\sum\limits_{i=s+1}^{m-1}h_i=0$). Then
$1^{x-\sum\limits_{i=s+1}^{m-1} h_i} (\prod\limits_{i\in [s+1,m-1]} h_i)$ and $1^{x-\sum\limits_{i=s+1}^{m} h_i} (\prod\limits_{i\in [s+1,m]} h_i)$
 are the desired subsequences of $T$.

Suppose $m=\ell$.  Then $x-\sum\limits_{i=s+2}^{\ell} h_i\leq \sum\limits_{i\in [1,\ell]\setminus \{s+1\}}h_i-\sum\limits_{i=s+2}^{\ell}h_i=\sum\limits_{i=1}^{s}h_i
=s$. Then $1^{x-\sum\limits_{i=s+2}^{\ell} h_i} (\prod\limits_{i\in [s+2,\ell]} h_i)$ and $1^{x-\sum\limits_{i=s+1}^{\ell} h_i} (\prod\limits_{i\in [s+1,\ell]} h_i)$
 are the desired subsequences of $T$, done.
\end{proof}

 \begin{lemma} \label{Lemma folklore}  \   Let $T=\prod\limits_{i\in [1,\ell]} h_i$ be a sequence of positive integers such that  $\sum\limits_{i\in [1,\ell]} h_i>{\rm v}_1(T)$ and
 $\sum(T)=[1,\sum\limits_{i\in [1,\ell]} h_i]$. Then there exists $t\in [1,\ell]$ such that $1<h_t\leq {\rm v}_1(T)+1$.
 \end{lemma}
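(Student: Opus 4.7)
The plan is to focus on the subsum $s+1$, where $s:={\rm v}_1(T)$, since this is the smallest value that cannot be achieved using only the $1$'s of $T$, yet must appear in $\Sigma(T)$. The hypothesis $\sum_{i\in[1,\ell]}h_i>{\rm v}_1(T)$ gives $s+1\le \sum_{i\in[1,\ell]}h_i$, and $\Sigma(T)=[1,\sum_{i\in[1,\ell]}h_i]$ then yields $s+1\in\Sigma(T)$. Thus there exists a nonempty subsequence $W\mid T$ with $\sigma(W)=s+1$.

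Next I would split $W=W_1\cdot W_2$, where $W_1$ collects the terms of $W$ equal to $1$ and $W_2$ collects the terms of $W$ strictly greater than $1$. Since the $1$'s appearing in $W$ are drawn from those of $T$, we have $|W_1|\le {\rm v}_1(T)=s$, so
\[
\sigma(W_2)=\sigma(W)-\sigma(W_1)=(s+1)-|W_1|\ge 1,
\]
which forces $W_2\neq \varepsilon$. Taking $h_t$ to be any term of $W_2$, the positivity of the remaining terms gives $h_t\le \sigma(W_2)$, and hence
\[
1<h_t\le \sigma(W_2)=(s+1)-|W_1|\le s+1={\rm v}_1(T)+1,
\]
which is the desired index $t$.

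I do not anticipate a genuine obstacle; the only real content is the canonical choice of target $s+1$. A larger target would not force the complementary sum $\sigma(W_2)$ to stay below ${\rm v}_1(T)+1$ and so would fail to bound any extracted term, while a smaller target could be realized using only $1$'s and would not guarantee a term greater than $1$ in the realizing subsequence. Once this choice is made, the remainder is a routine partition of $W$ into its unit and non-unit parts together with the trivial observation that each term of a multiset of positive numbers is bounded by the total sum.
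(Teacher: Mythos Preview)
Your proof is correct and follows essentially the same approach as the paper: both target the value ${\rm v}_1(T)+1$, realize it as a subsum, and observe that the realizing subsequence must contain a term strictly greater than $1$ which is bounded above by the total sum ${\rm v}_1(T)+1$. The paper's proof is a single sentence that omits the explicit $W_1\cdot W_2$ decomposition, but the content is identical.
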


\begin{proof} Since ${\rm v}_1(T)+1\in \sum(T)$, we can represent ${\rm v}_1(T)+1$ as a sum of some terms from $T$. Then  we must have such $h_t$ as required.
\end{proof}

\noindent $\bullet$ Let $G$ be a cyclic group of order $n$, and let $g$ be a generator of $G$. For any $a\in G$, by ${\rm Ind}_g(a)$ we denote the unique positive integer $s\in [1,n]$  with $s g=a$. One fact will be used frequently is: if a nonempty sequence $V\in \mathcal{F}(G)$ satisfies that $\sum\limits_{a\mid V}{\rm Ind}_g(a)=n$, then $V$ is a minimal zero-sum sequence.

 \begin{lemma} \label{SachenChen} ([\cite{GRuzsa}, Theorem 5.1.8], \cite{Savchev-Chen}, \cite{Yuan}) \  Let $G$ be a cyclic group of order $n\geq 3$. If $T\in \mathcal{F}(G)$ is zero-sum free with $|T|>\frac{n}{2}$, then there exists some $g\in G$ with ${\rm ord}(g)=n$ such that

(i) $\sum\limits_{a\mid T} {\rm Ind}_g(a)<n$;

(ii) $\sum(\prod\limits_{a\mid T}{\rm Ind}_g(a))=[1, \sum\limits_{a\mid T} {\rm Ind}_g(a)]$ and in particular, $\sum(T)=\{ig: 1\leq i\leq \sum\limits_{a\mid T} {\rm Ind}_g(a)\}$;

(iii) ${\rm v_g}(T)\geq {\rm Ind}_g(a)$ for any $a\mid T$;
\end{lemma}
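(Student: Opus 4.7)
The plan is to pick a generator $g$ of $G$ appearing in $T$ with ${\rm v}_g(T)$ as large as possible, and then read off all three conclusions from the integer lift $S := \prod_{a \mid T}{\rm Ind}_g(a)$. A preliminary step is to check that such a $g$ exists: if no generator of $G$ occurred in $T$, then ${\rm supp}(T)$ would lie in a proper subgroup $dG \cong \mathbb{Z}_{n/d}$ with $d \mid n$ and $d \geq 2$, whence $|T| > n/2 \geq n/d = {\rm D}(\mathbb{Z}_{n/d})$ would force a zero-sum subsequence of $T$, contradicting zero-sum freeness. Fixing such a $g$, I identify each $a \mid T$ with ${\rm Ind}_g(a) \in [1, n-1]$ and from now on analyse the integer sequence $S$.

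Conclusion (i) is almost a by-product of (ii): if $\sum_{a \mid T}{\rm Ind}_g(a) \geq n$, then the smoothness (ii), applied to an appropriate truncation of $T$ whose index sum lies in $[n,\, n - 1 + \max_{a \mid T}{\rm Ind}_g(a)]$, realises the integer $n$ as a partial sum of $S$; the corresponding subsequence of $T$ would then have sum $0_G$, a contradiction. So the bulk of the difficulty is concentrated in (ii).

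Statement (ii) is the main obstacle, and the part I would pursue structurally. The classical Olson / Eggleton--Erd\H{o}s bound gives $|\Sigma(T)| \geq |T| > n/2$, so $\Sigma(T) \cup \{0\}$ occupies more than half of $G = \mathbb{Z}_n$. A Vosper / Freiman-type argument, or equivalently a direct induction on $|T|$ that tracks how $\Sigma(T) = \Sigma(T \cdot a^{-1}) \cup (a + \Sigma(T \cdot a^{-1})) \cup \{a\}$ evolves when a single term $a$ is appended, should force $\Sigma(T)$ to be an arithmetic progression in $G$; since $0 \notin \Sigma(T)$, this progression must take the form $\{g',\, 2g',\, \ldots,\, kg'\}$ for some generator $g'$ of $G$. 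The maximal choice of ${\rm v}_g(T)$ is then what lets me identify $g'$ with $g$, yielding (ii). Lemmas \ref{lemma 1-smooth Integer sequences} and \ref{Lemma folklore} look tailor-made for the final step of translating this cyclic-group progression into the sharp integer smoothness of $S$ and for ruling out wrap-around modulo $n$.

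For (iii), I would argue by contradiction: suppose $h \mid T$ with $k := {\rm Ind}_g(h) > m := {\rm v}_g(T)$. By (ii) together with (i), there is a subsequence $V \mid T$ with $\sigma(V) = h$ whose integer index sum is exactly $k$; since $V = g^k$ would require $k \leq m$ copies of $g$ in $T$, the subsequence $V$ cannot consist solely of copies of $g$. We thus have two distinct subsequences of $T$, namely the single term $h$ and the subsequence $V$, both of integer index sum $k$. Combining these two representations via the double-counting / distinct-length principle of Lemma \ref{lemma 1-smooth Integer sequences}, together with the bound $K < n$ from (i) to prevent wrap-around, should produce a nonempty zero-sum subsequence of $T$, contradicting zero-sum freeness. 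The delicate point is the case where $h$ itself appears inside $V$, where one has to track multiplicities carefully through the symmetric difference; this is where I would expect most of the finishing bookkeeping to lie.
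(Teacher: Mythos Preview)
The paper does not prove this lemma at all: it is quoted from the literature (Savchev--Chen, Yuan, and the exposition in Geroldinger's book), so there is no ``paper's own proof'' to compare your proposal against. What you have written is therefore an attempted independent proof of a known, nontrivial structural theorem.

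As such, your sketch has substantive gaps. The real content of the result is (ii), and your treatment of it---``a Vosper/Freiman-type argument, or equivalently a direct induction \ldots\ should force $\Sigma(T)$ to be an arithmetic progression''---is precisely the hard step that Savchev--Chen and Yuan had to carry out, and it does not follow from Olson's $|\Sigma(T)|\ge |T|$ bound by any soft argument. Also, your appeal to Lemmas~\ref{lemma 1-smooth Integer sequences} and~\ref{Lemma folklore} misreads the paper's logical structure: those lemmas are auxiliary tools that the paper uses \emph{together with} Lemma~\ref{SachenChen} in the proof of Theorem~\ref{Theorem main result}, not ingredients for proving Lemma~\ref{SachenChen} itself. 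In particular, Lemma~\ref{lemma 1-smooth Integer sequences} needs the hypothesis $\sum h_i\le 2\ell-3$, whereas from (i) and $|T|>n/2$ you only get $\sum_{a\mid T}\mathrm{Ind}_g(a)\le n-1\le 2|T|-2$, so your plan for (iii) via that lemma does not go through. Finally, in your (iii) argument the subsequence $V$ of integer index sum $k$ guaranteed by (ii) may simply be the single term $h$ itself, so the ``two distinct representations'' you want to play off against each other are not automatically available.
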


 \begin{lemma} \label{Lemma Qk} \cite{GaoHong1} Let $G$ be a finite abelian group with $|G|\geq 3$. For any positive integer $t\geq {\rm D}(G)$, we have $Q_{t+1}(G)\subset Q_t(G)$.
 \end{lemma}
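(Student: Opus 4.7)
The plan is to use Lemma \ref{lemma criterion}(ii) as the workhorse: it translates membership in $Q_t(G)$ into the existence of an ``extremal witness'' sequence of length $t$ whose only nonempty zero-sum subsequences are copies of $V$. Once we have this reformulation, proving $Q_{t+1}(G)\subset Q_t(G)$ reduces to a purely sequence-theoretic task of shortening a witness of length $t+1$ to one of length $t$.

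Concretely, I would proceed as follows. Fix $V\in Q_{t+1}(G)$. By Lemma \ref{lemma criterion}(ii), choose $T\in \mathcal{F}(G)$ with $|T|=t+1$ such that every nonempty zero-sum subsequence of $T$ is equal to $V$. Since $t+1\geq {\rm D}(G)+1\geq 2$, the sequence $T$ is nonempty, so we may fix any term $a\mid T$ and set
\[
T'=T\cdot a^{-1}, \qquad |T'|=t.
\]
Every subsequence of $T'$ is also a subsequence of $T$; in particular every nonempty zero-sum subsequence of $T'$ is a nonempty zero-sum subsequence of $T$, hence equals $V$. Applying Lemma \ref{lemma criterion}(ii) in the other direction gives $V\in Q_t(G)$, which is exactly the desired containment.

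The only point that deserves a sentence of checking is that the condition ``every nonempty zero-sum subsequence of $T'$ equals $V$'' is not vacuously satisfied in a degenerate way: since $|T'|=t\geq {\rm D}(G)$, the definition of the Davenport constant guarantees that $T'$ actually contains a nonempty zero-sum subsequence, which must then be $V$ (so in fact $V\mid T'$). There is no real obstacle in this argument once Lemma \ref{lemma criterion}(ii) is in hand; the substantive content of the result is packaged in that extremal-witness characterization, and the monotonicity $Q_{t+1}(G)\subset Q_t(G)$ is merely the observation that witnesses of length $t+1$ can be trimmed to witnesses of length $t$ by deleting a single term.
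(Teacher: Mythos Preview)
Your argument is correct. The paper itself does not prove this lemma; it is quoted from \cite{GaoHong1} without proof. Since the characterization you invoke, Lemma~\ref{lemma criterion}(ii), is also taken from \cite{GaoHong1} (it is stated there as Lemma~5.7), your witness-trimming argument is almost certainly the intended one, and in any case it is complete as written: the passage from a length-$(t{+}1)$ witness to a length-$t$ witness by deleting any term is immediate, and your remark that $|T'|=t\ge {\rm D}(G)$ rules out the vacuous case is the only point that needed noting.
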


\begin{lemma}\label{Lemma Graham conjecture} \cite{GaoHamiWang}
Let $G$ be a cyclic group with $|G|\geq 3$, and let $T\in \mathcal{F}(G)$ with $|T|\geq |G|$. If $|{\rm supp}(T)|\geq  3$, then $T$ has two nonempty zero-sum subsequences with distinct lengths.
\end{lemma}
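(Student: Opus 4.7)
The plan is to argue by contrapositive: assume every nonempty zero-sum subsequence of $T$ has a common length $k$, and deduce $|{\rm supp}(T)| \leq 2$. The engines will be the Savchev-Chen structure theorem (Lemma \ref{SachenChen}) applied to a maximum-length zero-sum free subsequence of $T$, together with the two integer-sum lemmas (Lemmas \ref{lemma 1-smooth Integer sequences} and \ref{Lemma folklore}) which are designed precisely to extract subsequences with the same sum but different lengths from sequences of positive integers.

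First I would dispose of the element $0_G$: if $0_G \in {\rm supp}(T)$, then $0_G$ itself is a zero-sum subsequence of length one, forcing $k=1$ and hence every term of $T$ to equal $0_G$; then $|{\rm supp}(T)| = 1$ and the contrapositive holds trivially. Henceforth assume $0_G \notin {\rm supp}(T)$. Since $|T| \geq n = {\rm D}(G)$, pick a zero-sum free subsequence $U \mid T$ of maximum length; then $|U| \leq n-1$ and $|T \cdot U^{-1}| \geq 1$. For each $b \in {\rm supp}(T \cdot U^{-1})$, maximality of $U$ yields some $U_b \mid U$ with $\sigma(U_b) = -b$, so $U_b \cdot b$ is a zero-sum subsequence of $T$ of length $|U_b|+1$; under the single-length assumption this forces $|U_b| = k-1$ for every such $b$.

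The main case is $|U| > n/2$: Lemma \ref{SachenChen} provides a generator $g$ such that the integer sequence $\widehat{U} = \prod_{a \mid U} {\rm Ind}_g(a)$ satisfies $s := \sum_{a \mid U}{\rm Ind}_g(a) < n$, $\Sigma(\widehat{U}) = [1,s]$, and ${\rm v}_g(U) \geq {\rm Ind}_g(a)$ for every $a \mid U$. The hypothesis $|{\rm supp}(T)| \geq 3$ should be used to enforce a dichotomy: either ${\rm supp}(U)$ contains an element other than $g$ (so $\widehat{U}$ has a term $h_t > 1$), or ${\rm supp}(T \cdot U^{-1})$ contains two distinct elements whose completions $U_{b_1}, U_{b_2}$ have unequal lengths---the latter already contradicts $|U_b| \equiv k-1$. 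In the former subcase, Lemma \ref{Lemma folklore} locates $h_t$ with $1 < h_t \leq {\rm v}_1(\widehat{U}) + 1$, and then Lemma \ref{lemma 1-smooth Integer sequences} furnishes two subsequences of $\widehat{U}$ with the same sum but different lengths; lifting these back yields $V_1, V_2 \mid U$ with $\sigma(V_1) = \sigma(V_2)$ and $|V_1| \neq |V_2|$. Since $\sigma(V_i) = jg$ for some $j \in [1,s]$ and $\Sigma(U) = \{ig : 1 \leq i \leq s\}$, one can locate a term of $T \cdot U^{-1}$ or a sub-sum inside $U$ disjoint from $V_1, V_2$ whose value is $-\sigma(V_1)$, thereby completing $V_1$ and $V_2$ to two zero-sum subsequences of $T$ of distinct length---the desired contradiction. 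The residual regime $|U| \leq n/2$ yields $|T \cdot U^{-1}| \geq n/2$, where the same type of analysis applied to ${\rm supp}(T \cdot U^{-1})$, exploiting $|{\rm supp}(T)| \geq 3$ to produce non-aligned completions, closes the case.

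The main obstacle will be the clean translation of $|{\rm supp}(T)| \geq 3$ into the dichotomy above, together with the book-keeping needed to ensure that the completing term (or sub-sum) producing the second zero-sum lies disjoint from both $V_1$ and $V_2$ inside $T$. The boundary regime $|U| \approx n/2$, where Savchev-Chen only barely applies, will likely require a small auxiliary argument to bridge the two cases; I expect this to be the technically most delicate step, since it is precisely where one cannot rely on the full strength of either the structure theorem or the abundance of $T \cdot U^{-1}$.
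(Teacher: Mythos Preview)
The paper does not prove Lemma~\ref{Lemma Graham conjecture}; it is quoted from \cite{GaoHamiWang} without argument and used as a black box in the proof of Theorem~\ref{Theorem intersection minimal}. There is therefore no proof in the paper to compare your attempt against.

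As to the sketch itself: the broad strategy---pass to a maximal zero-sum free $U \mid T$, apply Savchev--Chen when $|U| > n/2$, use Lemmas~\ref{lemma 1-smooth Integer sequences} and~\ref{Lemma folklore} to produce equal-sum unequal-length subsequences of $U$, and complete them to zero-sums via a term of $T \cdot U^{-1}$---is plausible and parallels how these tools appear in the proof of Theorem~\ref{Theorem main result}. But several steps are not secured. First, Lemma~\ref{lemma 1-smooth Integer sequences} needs $\sum_{a \mid U}{\rm Ind}_g(a) \leq 2|U| - 3$, whereas $|U| > n/2$ and $\sum < n$ only yield $\sum \leq 2|U| - 1$; the borderline (e.g.\ $n$ odd, $|U| = (n+1)/2$, $\sum = n-1$) is not covered. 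Second, the completion step is the heart of the matter and is left vague: to make it work you must target $x = n - {\rm Ind}_g(c)$ for a specific $c \in {\rm supp}(T \cdot U^{-1})$ and then verify that this $x$ lies in the admissible interval $[h_t, \sum_{i \neq t} h_i]$ of Lemma~\ref{lemma 1-smooth Integer sequences}; the fallback ``a sub-sum inside $U$ disjoint from $V_1, V_2$'' is not a workable mechanism. Third, the regime $|U| \leq n/2$ is essentially untreated---Savchev--Chen does not apply to $T \cdot U^{-1}$ (which need not be zero-sum free), so ``the same type of analysis'' does not carry over. You are right that the boundary is the delicate part; the actual argument in \cite{GaoHamiWang} (settling Graham's conjecture) handles it by different means and is more involved than your outline suggests.
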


Now we are in a position to prove Theorems \ref{Theorem main result} and \ref{Theorem intersection minimal}.

\noindent {\bf Proof of Theorem \ref{Theorem main result}}. \ For a sequence $W\in\{a^{{\rm ord}(a)}: a\in G\}$, or $W\in \{g^{k}\cdot (-kg):  {\rm ord}(g)=n, 1\leq k\leq n-2\}$, or $W\in  X$ (when $4\mid n$ and $n\geq 8$), we construct a sequence $T_W=a^{n}$, or  $T_W=g^{n-1}(-kg)$, or $T_W=g^{\frac{n}{2}}\cdot [(\frac{n}{2}-1)g]^{\frac{n}{2}}$ respectively, with length $|T_W|={\rm D}(G)=n$. We can verify that every minimal zero-sum subsequence of $T_W$ is equal to $W$.  It follows from Lemma \ref{lemma criterion} (i) that $\{a^{{\rm ord}(a)}: a\in G\}\cup \{g^{k}\cdot (-kg):  {\rm ord}(g)=n, 1\leq k\leq n-2\}\cup X\subset \bigcap\limits_{\Omega\in \Gamma} \Omega$.
To show the equality, we assume to the contrary that there exists some sequence
\begin{equation}\label{equation V in omegainGamma}
V\in \bigcap\limits_{\Omega\in \Gamma} \Omega
\end{equation} but
\begin{equation}\label{equation V not in}
V\notin \{a^{{\rm ord}(a)}: a\in G\}\cup \{g^{k}\cdot (-kg):  {\rm ord}(g)=n, 1\leq k\leq n-2\}\cup X.
\end{equation} Then
\begin{equation}\label{equation |supp(T)|geq 2}
|{\rm supp}(V)|\geq 2.
\end{equation}
By \eqref{equation V in omegainGamma} and Lemma \ref{lemma criterion} (i), there exists a sequence $T$ over $G$ of length $n$ such that every minimal zero-sum subsequence of $T$ is equal to $V$.
Fix an element $c\in {\rm supp}(V)$ with
\begin{equation}\label{equation vc(T)isminimal}
{\rm v}_c(T)=\min\limits_{x\mid V}\{{\rm v}_x(T)\}.
\end{equation}
By \eqref{equation |supp(T)|geq 2} and \eqref{equation vc(T)isminimal}, we have that
\begin{equation}\label{equation vc(T)leqn/2}
{\rm v}_c(T)\leq \frac{n}{2}
 \end{equation}
 and moreover, combining with the fact $Tc^{-{\rm v}_c(T)}$ is  zero-sum free, we have that
\begin{equation}\label{equation if and onlyif}
{\rm v}_c(T)=\frac{n}{2} \Leftrightarrow Tc^{-{\rm v}_c(T)}=b^{\frac{n}{2}} \mbox{ for some }b\in G \mbox{ with }{\rm ord}(b)=n.
\end{equation}

\noindent {\bf Claim A.}   There exists some generator $g$ of $G$ such that $\sum (\prod\limits_{a\mid Tc^{-{\rm v}_c(T)}}{\rm Ind}_g(a))=[1,\sum\limits_{a\mid Tc^{-{\rm v}_c(T)}} {\rm Ind}_g(a)]$ with $n-1\geq \sum\limits_{a\mid Tc^{-{\rm v}_c(T)}} {\rm Ind}_g(a)\geq |Tc^{-{\rm v}_c(T)}|\geq \frac{n}{2}$, and moreover, equality $\sum\limits_{a\mid Tc^{-{\rm v}_c(T)}} {\rm Ind}_g(a)=|Tc^{-{\rm v}_c(T)}|$ holds if and only if  $T=g^{n-{\rm v}_c(T)}c^{{\rm v}_c(T)}$.

\noindent {\sl Proof of Claim A.} \ If ${\rm v}_c(T)=\frac{n}{2}$, the conclusion follows from \eqref{equation if and onlyif} readily. By \eqref{equation vc(T)leqn/2}, it remains to consider the case of ${\rm v}_c(T)<\frac{n}{2}$, i.e., $|Tc^{-{\rm v}_c(T)}|>\frac{n}{2}$. Since $Tc^{-{\rm v}_c(T)}$ is  zero-sum free, it follows from
Lemma \ref{SachenChen} (i) and (ii) that there exists some generator $g$ of $G$ such that $\sum (\prod\limits_{a\mid Tc^{-{\rm v}_c(T)}}{\rm Ind}_g(a))=[1,\sum\limits_{a\mid Tc^{-{\rm v}_c(T)}} {\rm Ind}_g(a)]$ with $n-1\geq \sum\limits_{a\mid Tc^{-{\rm v}_c(T)}} {\rm Ind}_g(a)\geq  |Tc^{-{\rm v}_c(T)}|>\frac{n}{2}$. Moreover, if $\sum\limits_{a\mid Tc^{-{\rm v}_c(T)}} {\rm Ind}_g(a)=|Tc^{-{\rm v}_c(T)}|$, then ${\rm Ind}_g(a)=1$ for each $a\mid Tc^{-{\rm v}_c(T)}$, i.e.,  $Tc^{-{\rm v}_c(T)}=g^{|Tc^{-{\rm v}_c(T)}|}$, completing the proof of Claim A.
\qed

\noindent $\bullet$ In the remaining of the proof of Theorem \ref{Theorem main result}, we shall fix an generator $g$ (noting that $g\neq c$ since $1\in \sum (\prod\limits_{a\mid Tc^{-{\rm v}_c(T)}}{\rm Ind}_g(a))$) of $G$ such that the properties given in Claim A hold. Then we simply denote ${\rm Ind}_g(\cdot)$ by  ${\rm Ind}(\cdot)$.

Note that $$({\rm v}_c(T)-1){\rm Ind}(c)+\sum\limits_{a\mid Tc^{-{\rm v}_c(T)}} {\rm Ind}(a)\geq  n.$$ Otherwise, $n-1\geq ({\rm v}_c(T)-1){\rm Ind}(c)+\sum\limits_{a\mid Tc^{-{\rm v}_c(T)}} {\rm Ind}(a)\geq ({\rm v}_c(T)-1)2+|Tc^{-{\rm v}_c(T)}|=|T|+({\rm v}_c(T)-2)=n+({\rm v}_c(T)-2)$ implies that ${\rm v}_c(T)=1$ and $\sum\limits_{a\mid Tc^{-{\rm v}_c(T)}} {\rm Ind}(a)=|Tc^{-{\rm v}_c(T)}|$, and by Claim A, we derive $T=g^{n-1}c$.  Then we find a minimal zero-sum subsequence $V=g^{n-{\rm Ind}(c)}c$ of $T$, a contradiction with \eqref{equation V not in}.

Let \begin{equation}\label{equation mleq vc(T)-1}
m\in [1, {\rm v}_c(T)-1]
\end{equation}
be the least positive integer such that
\begin{equation}\label{equation mind(C)+ageq n}
m {\rm Ind}(c)+\sum\limits_{a\mid Tc^{-{\rm v}_c(T)}} {\rm Ind}(a)\geq  n.
\end{equation}
By applying Claim A and the minimality of $m$, we have that
\begin{equation}\label{equation (m-1)indleq n/2-1}
(m-1){\rm Ind}(c)\leq n-\sum\limits_{a\mid Tc^{-{\rm v}_c(T)}}{\rm Ind}(a)-1\leq \frac{n}{2}-1.
\end{equation}
Note that \begin{equation}\label{equation m Ind(c)leq n}
m {\rm Ind}(c)\leq n.
\end{equation}
This is because that if $m=1$ then ${\rm Ind}(c)\leq n$ follows from the definition of the notation ${\rm Ind}(\cdot)$, and if $m>1$ then $m {\rm Ind}(c)\leq 2(m-1){\rm Ind}(c)< n$ follows from \eqref{equation (m-1)indleq n/2-1}.

Since $\sum (\prod\limits_{a\mid Tc^{-{\rm v}_c(T)}}{\rm Ind}_g(a))=[1,\sum\limits_{a\mid Tc^{-{\rm v}_c(T)}} {\rm Ind}_g(a)]$, it follows from \eqref{equation mind(C)+ageq n} and \eqref{equation m Ind(c)leq n} that there exists a subsequence $L$ of $Tc^{-{\rm v}_c(T)}$ such that $\sum\limits_{a\mid  L} {\rm Ind}(a)=n-m {\rm Ind}(c)$ and $\sum\limits_{a\mid c^m \cdot L} {\rm Ind}(a)=n$, which implies that $c^m \cdot L$ is a minimal zero-sum subsequence of $T$
and so
$V=c^m \cdot  L$ with
\begin{equation}\label{equation vc(V)=m}
{\rm v}_c(V)=m.
\end{equation}

On the other hand, we have
\begin{equation}\label{equation (m+1)indgeq n+1}
(m+1){\rm Ind}(c)\geq n+1,
\end{equation}
otherwise $(m+1){\rm Ind}(c)\leq n$, it follows from \eqref{equation mind(C)+ageq n} and Claim A that there exist two subsequences $W_1,W_2$ (perhaps empty sequence) of $Tc^{-{\rm v}_c(T)}$ with $\sum\limits_{a\mid W_1} {\rm Ind}(a)=n-(m+1) {\rm Ind}(c)$ and $\sum\limits_{a\mid W_2} {\rm Ind}(a)=n-m {\rm Ind}(c)$,
combining with \eqref{equation mleq vc(T)-1}, then $W_1 c^{m+1}$ and $W_2 c^{m}$ are two minimal zero-sum subsequences of $T$ with $W_1 c^{m+1}\neq W_2 c^{m}$, a contradiction.

Combining \eqref{equation (m-1)indleq n/2-1} and \eqref{equation (m+1)indgeq n+1}, we calculate that  $$m<3.$$
Suppose that $m=1$. By \eqref{equation (m+1)indgeq n+1},  we drive that
\begin{equation}\label{equation n-Ind(c)leq n-1/2}
n-{\rm Ind}(c)\leq  \frac{n-1}{2}.
\end{equation}
Now we have
\begin{equation}\label{equation n-Ind(c)>vg(Tc-vc(T)}
n-{\rm Ind}(c)>{\rm v}_g(Tc^{-{\rm v}_c(T)}),
\end{equation}
 since otherwise $n-{\rm Ind}(c)\leq {\rm v}_g(Tc^{-{\rm v}_c(T)})$ implies that $V=g^{n-{\rm Ind}(c)} \cdot c$ is a minimal zero-sum subsequence of $T$, a contradiction with \eqref{equation V not in}.

 By Claim A, \eqref{equation n-Ind(c)leq n-1/2} and \eqref{equation n-Ind(c)>vg(Tc-vc(T)}, we can see that $\sum (\prod\limits_{a\mid Tc^{-{\rm v}_c(T)}}{\rm Ind}(a))=[1,\sum\limits_{a\mid Tc^{-{\rm v}_c(T)}} {\rm Ind}(a)]$ and ${\rm v}_1(\prod\limits_{a\mid Tc^{-{\rm v}_c(T)}}{\rm Ind}(a))={\rm v}_g(Tc^{-{\rm v}_c(T)})<\frac{n}{2}\leq \sum\limits_{a\mid Tc^{-{\rm v}_c(T)}} {\rm Ind}(a)$, combining with Lemma \ref{Lemma folklore}, we can find some  $x\mid Tc^{-{\rm v}_c(T)}$  such that $1<{\rm Ind}(x)\leq {\rm v}_g(Tc^{-{\rm v}_c(T)})+1$, and thus,
 $${\rm Ind}(x)\leq n-{\rm Ind}(c)\leq \left\lfloor\frac{n-1}{2}\right\rfloor\leq |Tc^{-{\rm v}_c(T)}|-1\leq \sum\limits_{a\mid Tc^{-{\rm v}_c(T)}x^{-1}} {\rm Ind}(a)=-{\rm Ind}(x)+\sum\limits_{a\mid Tc^{-{\rm v}_c(T)}} {\rm Ind}(a).$$
Then by applying Lemma \ref{lemma 1-smooth Integer sequences} to the integers sequence $\prod\limits_{a\mid Tc^{-{\rm v}_c(T)}} {\rm Ind}(a)$,
 we have that
 \begin{equation}\label{equation result by}
 \sum \limits_{a\mid Tc^{-{\rm v}_c(T)}} {\rm Ind}(a)\geq 2|\prod\limits_{a\mid Tc^{-{\rm v}_c(T)}} {\rm Ind}(a)|-2=2|Tc^{-{\rm v}_c(T)}|-2,
 \end{equation}
  since otherwise, we can find two subsequences $W_1,W_2$ of $Tc^{-{\rm v}_c(T)}$ with $W_1\neq W_2$ such that $\sum \limits_{a\mid W_1} {\rm Ind}(a)=\sum \limits_{a\mid W_2} {\rm Ind}(a)=n-{\rm Ind}(c)$, and thus, $W_1c$ and $W_2c$ are minimal zero-sum subsequences of $T$ with $W_1c\neq W_2c$, a contradiction.

 Note that
  \begin{equation}\label{equation vc(T)leq (n-1)/2}
 {\rm v}_c(T)\leq \frac{n-1}{2},
  \end{equation}
  since otherwise, by \eqref{equation vc(T)leqn/2} and \eqref{equation if and onlyif}, we have ${\rm v}_c(T)=\frac{n}{2}$ and $Tc^{-{\rm v}_c(T)}=g^{\frac{n}{2}}$ (noticing that $g\mid Tc^{-{\rm v}_c(T)}$ since $1\in \sum (\prod\limits_{a\mid Tc^{-{\rm v}_c(T)}}{\rm Ind}(a))$ by Claim A), combining with \eqref{equation n-Ind(c)leq n-1/2},  we have $g^{n-{\rm Ind}(c)}c$ is a minimal zero-sum subsequence of $T$,  a contradiction with \eqref{equation V not in}. This proves \eqref{equation vc(T)leq (n-1)/2}.

By \eqref{equation result by}, \eqref{equation vc(T)leq (n-1)/2} and Claim A, we have that
$n-1\geq \sum \limits_{a\mid Tc^{-{\rm v}_c(T)}} {\rm Ind}(a)\geq 2|Tc^{-{\rm v}_c(T)}|-2= 2(n-{\rm v}_c(T))-2\geq 2 (n-\frac{n-1}{2})-2= n-1$, which implies ${\rm v}_c(T)=\frac{n-1}{2}$. Combined with \eqref{equation |supp(T)|geq 2}, \eqref{equation vc(T)isminimal}, \eqref{equation n-Ind(c)leq n-1/2} and \eqref{equation n-Ind(c)>vg(Tc-vc(T)}, we conclude that there exists some $h\mid Tc^{-{\rm v}_c(T)}$ with $h\neq g$ such that ${\rm v}_h(T)\geq  {\rm v}_c(T)=\frac{n-1}{2}$.
Since ${\rm Ind}(h)\geq  2$, it follows that $\sum \limits_{a\mid Tc^{-{\rm v}_c(T)}}{\rm Ind}(a)={\rm Ind}(h) \cdot {\rm v}_h(T)+\sum \limits_{a\mid Tc^{-{\rm v}_c(T)}h^{-{\rm v}_h(T)}}{\rm Ind}(a)\geq 2 {\rm v}_h(T)+|Tc^{-{\rm v}_c(T)}h^{-{\rm v}_h(T)}|=2 {\rm v}_h(T)+|T|-{\rm v}_c(T)-{\rm v}_h(T)\geq |T|=n$,
a contradiction.  Hence,  $$m=2.$$

\noindent \textbf{Claim B.} For any $i\in [1,{\rm v}_c(T)]$ such that $i\equiv 1\pmod 2$, then ${\rm Ind}(ic) \leq {\rm v}_c(T)-1.$

\noindent {\sl Proof of Claim B.} Assume to the contrary that there exists some $j\in [1,{\rm v}_c(T)]$ such that
\begin{equation}\label{equation h is odd}
j\equiv 1\pmod 2
\end{equation}
and
\begin{equation}\label{equation Ind(hc)geqvcT}
{\rm Ind}(jc)\geq {\rm v}_c(T).
\end{equation}
 Since ${\rm v}_c(T)=n-|Tc^{-{\rm v}_c(T)}|\geq n-\sum\limits_{a\mid Tc^{-{\rm v}_c(T)}} {\rm Ind}(a)$, it follows from \eqref{equation Ind(hc)geqvcT} that $n-{\rm Ind}(jc)\leq \sum\limits_{a\mid Tc^{-{\rm v}_c(T)}} {\rm Ind}(a)$. By Claim A, there exists a subsequence $W$ of $Tc^{-{\rm v}_c(T)}$ with $\sum\limits_{a\mid W} {\rm Ind}(a)=n- {\rm Ind}(jc)$.
Then $W c^{j}$ is a zero-sum subsequence of $T$ with ${\rm v}_c(W c^{j})=j$. By \eqref{equation h is odd}, we can find a {\bf minimal} zero-sum subsequence of $W c^{j}$ with
$c$ appearing odd times, which definitely is not equal to $V$ by \eqref{equation vc(V)=m}, a contradiction. This proves Claim B. \qed

By \eqref{equation (m-1)indleq n/2-1} and \eqref{equation (m+1)indgeq n+1}, we have
\begin{equation}\label{equation 2n+2/3leqind(c)leq}
\frac{2n+2}{3}\leq  2 {\rm Ind}(c)\leq n-2,
 \end{equation}
 which implies that
\begin{equation}\label{equation ngeq 8}
n\geq 8
\end{equation}
 and
 $${\rm Ind}(c)\geq  3.$$
Recall that ${\rm v}_c(T)\leq \frac{n}{2}$ by \eqref{equation vc(T)leqn/2}.
Since $\frac{2n+2}{3}>\frac{n}{2}$ and ${\rm Ind}((i+2)c)$ is equal to the positive residue of ${\rm Ind}(ic)+2 {\rm Ind}(c)$ module $n$, it follows from \eqref{equation 2n+2/3leqind(c)leq} and Claim B that
$${\rm v}_c(T)-1\geq {\rm Ind}(c) > {\rm Ind}(3c)> \cdots>{\rm Ind}( (2\left\lfloor\frac{{\rm v}_c(T)-1}{2}\right\rfloor+1) c)\geq 1,$$ and in particular,
${\rm Ind}(c), {\rm Ind}(3c), \ldots,{\rm Ind}( (2\left\lfloor\frac{{\rm v}_c(T)-1}{2}\right\rfloor+1) c)$ is an arithmetic progression of positive integers with a difference $2 {\rm Ind}(c)-n\leq -2$.  Since the length of the above arithmetic progression is $\left\lfloor\frac{{\rm v}_c(T)-1}{2}\right\rfloor+1$,
it follows that the difference $$2 {\rm Ind}(c)-n=-2,$$ otherwise ${\rm Ind}(c)\geq {\rm Ind}( (2\left\lfloor\frac{{\rm v}_c(T)-1}{2}\right\rfloor+1) c)+3\left\lfloor\frac{{\rm v}_c(T)-1}{2}\right\rfloor\geq
1+3\left\lfloor\frac{{\rm v}_c(T)-1}{2}\right\rfloor>{\rm v}_c(T)-1$ which is absurd.
It follows that
$n \equiv 0\pmod 2$ and
\begin{equation}\label{equation Indc=frac n2-1}
{\rm Ind}(c) =\frac{n}{2}-1.
\end{equation}
By \eqref{equation (m-1)indleq n/2-1} with $m=2$, we derive that $\sum\limits_{a\mid Tc^{-{\rm v}_c(T)}} {\rm Ind}(a)=\frac{n}{2}$.
By Claim A, we conclude that $T=g^{\frac{n}{2}}\cdot c^{\frac{n}{2}}$.
If $4\not\mid n$, then $c^{\frac{n}{2}}$ is a minimal zero-sum subsequence of $T$ by \eqref{equation Indc=frac n2-1}, which is a contradiction with \eqref{equation V not in}. Hence, $$4\mid n
\mbox{ and } n\geq 8$$ by \eqref{equation ngeq 8}.
It follows from \eqref{equation Indc=frac n2-1} that $V=g^2 \cdot c^2=g^2 \cdot [(\frac{n}{2}-1)g]^2\in X$, a contradiction with \eqref{equation V not in}. This completes the proof of Theorem \ref{Theorem main result}. \qed

\bigskip

Next we shall prove Theorem \ref{Theorem intersection minimal} as follows.

\noindent {\bf Proof of Theorem \ref{Theorem intersection minimal}}. \
{\sl Arguments for Conclusion (i)}. \ For a sequence $V$ such that (i) $V=0$ or (ii) $V=a^{{\rm ord}(a)}$ with ${\rm ord}(a)=\frac{n}{2}$ or (iii) $V=a^{{\rm ord}(a)}$ with ${\rm ord}(a)=n$ or (iv) $V=g^{k}\cdot (-kg)$ with ${\rm ord}(g)=n, 1\leq k\leq n-2$, we construct a sequence $T_V$ over $G$ of length ${\rm D}(G)=n$ with (i) $T_V=0\cdot L$ where $L$ is zero-sum free sequence of length $n-1$ or (ii) $T_V=a^{n-1} b$ with ${\rm ord}(b)=n$ or (iii) $T_V=a^{{\rm ord}(a)}$ or (iv) $T_V=g^{n-1}\cdot (-kg)$, respectively. Observe that every zero-sum subsequence of $T_V$ is equal to $V$.  By Lemma \ref{lemma criterion} (ii), we conclude that $\{a^{{\rm ord}(a)}: a\in G, {\rm ord}(a)\in \{1,\frac{n}{2}, n\}\}\cup \{g^{k}\cdot (-kg):  g\in G, {\rm ord}(g)=n, 1\leq k\leq n-2\}\subset Q_n(G)$.
On the other hand, by Definitions \ref{Definition minimal set} and \ref{Definition Qk(G)}, we have \begin{equation}\label{equation some inclusions}
Q_n(G)=\bigcap\limits_{\Omega \subset \mathcal{B}(G), \ d_{\Omega}(G)=n}\Omega\subset \bigcap\Omega'
 \end{equation}
 where $\Omega'$ takes all minimal sets contained in $\mathcal A (G)$.  To complete the proof, we assume to the contrary that there exists a subsequence
\begin{equation}\label{equation Win setminus}
W\in Q_n(G)\setminus \{a^{{\rm ord}(a)}: a\in G, {\rm ord}(a)\in \{1,\frac{n}{2}, n\}\}\cup \{g^{k}\cdot (-kg):  g\in G, {\rm ord}(g)=n, 1\leq k\leq n-2\}.
\end{equation}
Combined with \eqref{equation some inclusions} and Theorem \ref{Theorem main result}, we have that  (i) $W=a^{{\rm ord}(a)}$ with $1<{\rm ord}(a)\leq \frac{n}{3}$ and $n\geq 6$; or (ii) $W=g^2\cdot [(\frac{n}{2}-1)g]^2$ with  ${\rm ord}(g)=n$ where $4\mid n$ and $n\geq 8$.
By Lemma \ref{lemma criterion} (ii), there exists a sequence $T$ over $G$ of length ${\rm D}(G)=n$ such that every zero-sum subsequence of $T$ is equal to $W$.

Consider the case of (i) $W=a^{{\rm ord}(a)}$ with ${\rm ord}(a)=\frac{n}{d}$ and $3\leq d<n$. Then $TW^{-1}$ is a zero-sum free sequence of length
\begin{equation}\label{equation |Tw-1|geq 2n/3}
|TW^{-1}|=|T|-{\rm ord}(a)=\frac{(d-1)n}{d}.
\end{equation}
Since $\frac{(d-1)n}{d}>\frac{n}{2}$, it follows from Lemma \ref{SachenChen} (i) that there exists some generator $b$ of $G$ such that
\begin{equation}\label{equation sum Indbaleq n-1}
\sum\limits_{a\mid T\cdot W^{-1}} {\rm Ind}_b(a)\leq n-1.
\end{equation}
Notice that
$\sum\limits_{a\mid T\cdot W^{-1}} {\rm Ind}_b(a)={\rm v}_b(T\cdot W^{-1})+\sum\limits_{a\mid T\cdot W^{-1}\cdot b^{-{\rm v}_b(TW^{-1})}} {\rm Ind}_b(a)\geq  {\rm v}_b(T\cdot W^{-1})+2(|T\cdot W^{-1}|-{\rm v}_b(T\cdot W^{-1}))=2|T\cdot W^{-1}|-{\rm v}_b(T\cdot W^{-1})$. Then it follows from \eqref{equation |Tw-1|geq 2n/3} and \eqref{equation sum Indbaleq n-1} that
\begin{equation}\label{equation vg(Tw-1)geq (k-2)n/k+1}
{\rm v}_b(T\cdot W^{-1})\geq  2|T\cdot W^{-1}|-\sum\limits_{a\mid T\cdot W^{-1}} {\rm Ind}_b(a)\geq \frac{(d-2)n}{d}+1\geq  \frac{(d-2)2d}{d}+1\geq d.
 \end{equation}
Since $\sum(W)=\langle a \rangle$ is the subgroup of $G$ of order $\frac{n}{d}$ and $\langle a \rangle=\{db,2db,\ldots,(n-d)b\}$, it follows that there exists some $m\in [1,{\rm ord}(a)]$ such that $ma=(n-d)b\in \langle a \rangle$. Combined with \eqref{equation vg(Tw-1)geq (k-2)n/k+1}, we find
a zero-sum subsequence $a^m b^d$ of $T$ which is not equal to $W$,  a contradiction.

Consider the case of  (ii) $W=g^2\cdot [(\frac{n}{2}-1)g]^2$ with  ${\rm ord}(g)=n$ where $4\mid n$ and $n\geq 8$.
Note that ${\rm v}_g(T)\leq 3$ or ${\rm v}_{(\frac{n}{2}-1)g}(T)\leq 3$, since otherwise $g^4 \cdot [(\frac{n}{2}-1)g]^4$ is a zero-sum subsequence of $T$ which is not equal to $W$, a contradiction.

Take an element $c\in \{g,(\frac{n}{2}-1)g\}$ such that ${\rm v}_{c}(T)\leq 3$, i.e.,  ${\rm v}_{c}(T)\in \{2,3\}$.
Then $T_1=T\cdot c^{-{\rm v}_{c}(T)+1}$ is a zero-sum free sequence of length $|T_1|=|T|-{\rm v}_{c}(T)+1\in \{n-1,n-2\}$. Since $|{\rm supp}(T_1)|\geq 2$, it follows from Lemma \ref{SachenChen} (i) that
$n-1\geq \sum\limits_{a\mid T_1} {\rm Ind}_f(a)\geq |T_1|+1\geq n-1$ for some generator $f$ of $G$, and so $\sum\limits_{a\mid T_1} {\rm Ind}_f(a)=n-1$ and $|T_1|=n-2$, which implies that $T_1=f^{n-3}(2f)$.
Since $n-3\geq 5$, we have that $c=2f$ and $T=f^{n-3} (2f)^{3}$. Then $f^{n-6} \cdot  (2f)^{3}$ is a zero-sum subsequence of $T$ which is not equal to $W$, a contradiction.  This completes the proof of Conclusion (i).

{\sl Arguments for Conclusion (ii)}. \ Let $t\in [n+1,2n-1]$. For any element $a\in G$ with ${\rm ord}(a)=n$, we can take a sequence $a^t\in \mathcal{F}(G)$ such that every nonempty zero-sum subsequence of $a^t$ is equal to $a^n$. By Lemma \ref{lemma criterion} (ii), we conclude that $$\{a^{n}: a\in G, {\rm ord}(a)=n\}\subset Q_{t}.$$
By Conclusion (i) and Lemma \ref{Lemma Qk}, it suffices to show that  $(\{a^{{\rm ord}(a)}: a\in G, {\rm ord}(a)\in \{1, \frac{n}{2}\}\}\cup \{g^{k}\cdot (-kg):  g\in G, {\rm ord}(g)=n, 1\leq k\leq n-2\})\bigcap Q_{n+1}=\emptyset$. Assume to the contrary that there exists some $$V\in \left(\{a^{{\rm ord}(a)}: a\in G, {\rm ord}(a)\in \{1, \frac{n}{2}\}\}\cup \{g^{k}\cdot (-kg):  g\in G, {\rm ord}(g)=n, 1\leq k\leq n-2\}\right)\bigcap Q_{n+1}.$$ It follows from Lemma \ref{lemma criterion} (ii) that there exists a sequence $T$ of length $n+1$ such that every nonempty zero-sum subsequence of $T$ is equal to $V$.

Suppose $V=0$. Then $|TV^{-1}|=n={\rm D}(G)$ and $TV^{-1}$ contains a nonempty zero-sum subsequence $W$. Then $V\cdot W$ is a nonempty zero-sum subsequence of $T$ which is not equal to $V$, a contradiction.

Suppose $V=a^{\frac{n}{2}}$ with ${\rm ord}(a)=\frac{n}{2}$. We have that ${\rm v}_a(TV^{-1})<\frac{n}{2}$, since otherwise $a^n$ is a zero-sum subsequence of $T$ which is not equal to $V$, a contradiction. Then $T\cdot V^{-1}$ contains two terms $b_1,b_2$ which are distinct from $a$.
Since $\langle a\rangle$ is the subgroup of $G$ with index $2$, it follows that at least one of the three elements $b_1,b_2, b_1+b_2$ belongs to $\langle a\rangle$. Say $b_1+b_2\in\langle a\rangle$. Since $\sum(V)=\langle a\rangle$, we can find a subsequence $V'$ of $V$ with $\sigma(V')=-(b_1+b_2)$ and thus, $b_1\cdot b_2\cdot V'$ is a zero-sum subsequence of $T$ which is not equal to $V$, a contradiction.

Suppose $V=g^{k}\cdot (-kg)$ with ${\rm ord}(g)=n$ and $1\leq k\leq n-2$. Combined with Lemma \ref{Lemma Graham conjecture}, we conclude that ${\rm supp}(T)=\{g,-kg\}$ and $T=g^{\ell}\cdot (-kg)^{n+1-\ell}$ with $k\leq \ell\leq n-1$.
Note that $k\neq \frac{n}{2},$ since otherwise $(-kg)^2$ is a zero-sum subsequence of $T$ with $(-kg)^2\neq V$, which is a contradiction.
Note that
\begin{equation}\label{equation -2kg3midT}
\ell<n-1,
\end{equation}
 since otherwise $\ell=n-1$ and
$2(-kg)\in\{g,2g,\ldots,(n-1)g\}=-\sum(g^{\ell})$ and then we find a zero-sum subsequence of $T$ within $(-kg)$ appearing twice which definitely is not equal to $V$, a contradiction.
Note that
\begin{equation}\label{equation -2kgneqg}
-2kg\neq g,
\end{equation} since otherwise $k=\frac{n-1}{2}$ and so $g^{k-1} \cdot (-kg)^3$ is a zero-sum subsequence of $T$ with $g^{k-1} (-kg)^3\neq V$, a contradiction. Then it follows from \eqref{equation -2kg3midT} and \eqref{equation -2kgneqg}
that $\widetilde{T}=g^{\ell}\cdot (-kg)^{n-1-\ell}\cdot (-2kg)$ is a sequence over $G$ of length $n$ with $|{\rm supp}(\widetilde{T})|=3$. By applying Lemma \ref{Lemma Graham conjecture}, we can find a nonempty zero-sum subsequence $W$ of $\widetilde{T}$ with $W\neq V$. Recall every nonempty zero-sum subsequence of $T$ is equal to $V=g^k \cdot (-kg)$, i.e., $W\not\mid T$. Then we conclude that $(-2kg)\mid W$ and so $W\cdot(-2kg)^{-1}\cdot (-kg)^2$ is a zero-sum subsequence of $T$ which is not equal to $V$, a contradiction.   This proves Conclusion (ii).

{\sl Arguments for Conclusion (iii)}. \ In fact, with Conclusion (iii), one can refer to  Theorem 4.4 of \cite{GaoLiPengWang} or Corollary 5.10 of
\cite{GaoHong1}. For the reader's convenience, we give a one-sentence argument here: Since every sequence over $G$ of length at least $2n$ contains two nonempty zero-sum subsequences with distinct lengths, then Conclusion (iii) follows readily by applying Lemma \ref{lemma criterion} (ii).
\qed

\section{The minimal set in general finite abelian groups}

In this section, we shall show the following theorem.

\begin{theorem}\label{theorem in group exp(G)geq 6} \ Let $G$ be a finite nonzero abelian group with $\exp(G)\neq 3$. Then $\mathcal A (G)$ is a minimal set with respect to the Davenport constant ${\rm D}(G)$ if and only if one of the following conditions holds: (i)  $G\cong \mathbb{Z}_4$; (ii) $G\cong \mathbb{Z}_5$; (iii) $G\cong \mathbb{Z}_2^r$ for some $r\geq 1$.
\end{theorem}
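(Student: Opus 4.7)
The plan is to reformulate minimality via witness sequences: by Lemma \ref{lemma criterion}(i), $\mathcal{A}(G)$ is minimal with respect to ${\rm D}(G)$ precisely when every $V\in\mathcal{A}(G)$ is \emph{essential}, i.e., admits a sequence $T_V\in\mathcal{F}(G)$ of length ${\rm D}(G)$ whose unique minimal zero-sum subsequence is $V$. The theorem then splits into a sufficiency half (exhibit such $T_V$ for each of the three listed groups) and a necessity half (for every other $G$ with $\exp(G)\neq 3$, point out a specific $V$ that admits no witness, hence is deletable from $\mathcal{A}(G)$).

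For the sufficiency direction, the cyclic cases $G\cong \mathbb{Z}_4$ and $G\cong \mathbb{Z}_5$ are already subsumed by Corollary \ref{Cor A(G) is not minimal}, since the intersection $\bigcap_{\Omega\in \Gamma}\Omega$ computed in Theorem \ref{Theorem main result} equals $\mathcal{A}(G)$ for $n\leq 5$. The substantial case is $G\cong \mathbb{Z}_2^r$, where I would exploit the $\mathbb{F}_2$-vector space structure directly: each $V\in\mathcal{A}(\mathbb{Z}_2^r)$ is either the constant $0$, a square $a^2$ with $a\neq 0$, or the product of $k$ pairwise distinct nonzero elements whose support carries a unique minimal $\mathbb{F}_2$-linear dependence of size $k$. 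In each case I would extend ${\rm supp}(V)$ to an $\mathbb{F}_2$-spanning configuration by appending the correct number of linearly independent vectors, so that $T_V$ has length $r+1={\rm D}(\mathbb{Z}_2^r)$; a direct linear-algebra check then shows that $V$ is the only minimal zero-sum subsequence of $T_V$, whence Lemma \ref{lemma criterion}(i) gives essentiality of every $V$.

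For the necessity direction, the cyclic case $G\cong\mathbb{Z}_n$ with $n\geq 6$ is covered by Corollary \ref{Cor A(G) is not minimal}. For non-cyclic $G$ with $\exp(G)\geq 4$, I would write $G\cong \mathbb{Z}_{n_1}\oplus\cdots\oplus \mathbb{Z}_{n_k}$ with $n_1\mid\cdots\mid n_k=\exp(G)$ and $k\geq 2$, pick $g\in G$ of order $n_k$ and a suitable $h$ outside $\langle g\rangle$, and choose $V$ to be $g\cdot(-g)$ or $g\cdot h\cdot(-g-h)$ depending on the subcase. Assuming for contradiction that a witness $T_V=V\cdot L$ of length ${\rm D}(G)$ exists, the aim is to show that $L$ cannot simultaneously be zero-sum free of the required length and have $\Sigma(L)$ miss the elements whose presence would create a second minimal zero-sum subsequence; this should follow from Lemmas \ref{SachenChen}, \ref{lemma 1-smooth Integer sequences} and \ref{Lemma folklore} combined with the direct-sum decomposition of $G$. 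When $\exp(G)\geq 6$ the argument is relatively uniform, since one can carry over the three-term minimal zero-sum sequence $V=g\cdot(2g)\cdot((n-3)g)$ already used in the proof of Corollary \ref{Cor A(G) is not minimal}.

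The principal obstacle is the necessity direction at small exponent, namely $\exp(G)\in\{4,5\}$ with $G$ non-cyclic, where ${\rm D}(G)$ is small relative to $|G|$ and the tail $L$ is short, so forcing a second minimal zero-sum subsequence is delicate. I expect the argument to refine into the subfamilies $\mathbb{Z}_2\oplus \mathbb{Z}_{2^s}$, $\mathbb{Z}_{2^s}\oplus\mathbb{Z}_{2^t}$ and $\mathbb{Z}_5^r$ with $r\geq 2$ (plus higher-rank variants), with the choice of $V$ adapted to each. Verifying the non-existence of a witness in these small cases will likely require either an exhaustive subset-sum analysis or a tight use of the zero-sum-free structure on groups of small rank, and that is where the bulk of the technical work should lie.
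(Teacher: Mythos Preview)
Your sufficiency argument matches the paper's: the cyclic cases via Corollary~\ref{Cor A(G) is not minimal}, and for $G\cong\mathbb{Z}_2^r$ one removes a term $a$ from $V$, extends the (independent) support of $V\cdot a^{-1}$ to an $\mathbb{F}_2$-basis $\{b_1,\dots,b_r\}$, and takes $T_V=a\cdot b_1\cdots b_r$ of length $r+1={\rm D}(G)$.

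For necessity in the non-cyclic case your route diverges from the paper's and carries a genuine gap. You propose a \emph{short} $V$ (such as $g\cdot(-g)$ or $g\cdot h\cdot(-g-h)$) and plan to analyse the long zero-sum-free tail $L=T_V\cdot V^{-1}$ of a putative witness via Lemmas~\ref{SachenChen}, \ref{lemma 1-smooth Integer sequences} and~\ref{Lemma folklore}. But Lemma~\ref{SachenChen} (Savchev--Chen) is a structure theorem for \emph{cyclic} groups; no analogous description of long zero-sum-free sequences is available for a general $\mathbb{Z}_{n_1}\oplus\cdots\oplus\mathbb{Z}_{n_r}$, and ``combined with the direct-sum decomposition'' does not supply one. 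This missing tool is exactly what forces you into the small-exponent casework you flag as the principal obstacle.

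The paper avoids all of this by going to the opposite extreme: it takes a \emph{long} $V$, namely
\[
V=\Bigl(\prod_{i=1}^{r-1}e_i^{\,n_i-1}\Bigr)\cdot e_r^{\,n_r-3}\cdot(2e_r)\cdot(e_1+\cdots+e_r)
\]
for a basis $\{e_i\}$ with ${\rm ord}(e_i)=n_i$ and $n_r=\exp(G)\ge 4$. One checks directly that $V\in\mathcal A(G)$ and $|V|=\sum_{i}(n_i-1)<{\rm D}(G)$ (Lemma~\ref{Lemma Davenport precise value}), and---this is the whole point---that for every $c\in G$ the sequence $V\cdot c$ already contains a minimal zero-sum subsequence $\neq V$. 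Since any witness $T$ for $V$ must satisfy $V\mid T$ and $|T|>|V|$, adjoining any single leftover term of $T$ to $V$ defeats uniqueness, so Lemma~\ref{lemma criterion}(i) shows $V\notin\bigcap_{\Omega\in\Gamma}\Omega$. The argument is uniform across all non-cyclic $G$ with $\exp(G)\ge 4$; the split into $\exp(G)\ge 6$ versus $\exp(G)\in\{4,5\}$ that you anticipate never arises.
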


To prove Theorem \ref{theorem in group exp(G)geq 6}, some lemmas are necessary.

\begin{lemma} \cite{EmdeBoas007, Olson1}\label{Lemma Davenport precise value} \ Let $G\cong \mathbb{Z}_{n_1}\oplus \cdots  \oplus \mathbb{Z}_{n_r}$ with $1< n_1 \mid\cdots\mid n_r$. Then
${\rm D}(G)\geq 1 +\sum\limits_{i=1}^r (n_i-1)$. Moreover, equality holds if one of the following conditions holds:
(i) $r\leq 2$; (ii) $G$ is a $p$-group.
\end{lemma}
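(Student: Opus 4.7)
The plan is to establish the inequality ${\rm D}(G) \geq 1 + \sum_{i=1}^{r}(n_i-1)$ in general and then match it with an upper bound in the two stated cases. For the lower bound, I would fix a decomposition $G = \langle e_1 \rangle \oplus \cdots \oplus \langle e_r \rangle$ with ${\rm ord}(e_i) = n_i$ and exhibit the sequence $T_0 = \prod_{i=1}^{r} e_i^{n_i-1}$. Every nonempty subsequence has the form $\prod e_i^{m_i}$ with $0 \leq m_i \leq n_i - 1$ not all zero, whose sum $\sum m_i e_i$ is nonzero by directness of the decomposition; hence $T_0$ is zero-sum free and ${\rm D}(G) > |T_0| = \sum(n_i - 1)$.

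For the upper bound in the cyclic case ($r = 1$), the classical pigeonhole argument on partial sums suffices: given $a_1 \cdot\ldots\cdot a_n \in \mathcal{F}(\mathbb{Z}_n)$, the $n+1$ partial sums $s_0 = 0, s_1, \ldots, s_n$ cannot all be distinct in $\mathbb{Z}_n$, and any coincidence $s_i = s_j$ yields a nonempty zero-sum subsequence $a_{i+1}\cdot\ldots\cdot a_j$. The rank-$2$ case $G = \mathbb{Z}_{n_1} \oplus \mathbb{Z}_{n_2}$ with $n_1 \mid n_2$ would be reduced to the cyclic case by an iterative extraction: given $T$ of length $n_1 + n_2 - 1$, project along one coordinate via $\pi\colon G \to \mathbb{Z}_{n_2}$ and repeatedly peel off minimal zero-sum subsequences of $\pi(T)$ of length at most $n_2$ to obtain, via careful length-counting exploiting $n_1 \mid n_2$, at least $n_1$ pairwise disjoint subsequences of $T$ whose $\pi$-projections sum to zero; the residual sums lie in $\ker \pi \cong \mathbb{Z}_{n_1}$, and the cyclic bound applied to this length-$n_1$ sequence in $\mathbb{Z}_{n_1}$ completes the argument.

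For the $p$-group case, the plan is Olson's group-algebra argument. Working in $\mathbb{F}_p[G]$, the augmentation ideal $I = \ker(\epsilon)$ has nilpotency index exactly $1 + \sum(n_i - 1)$; this follows from the tensor decomposition $\mathbb{F}_p[G] \cong \bigotimes_{i} \mathbb{F}_p[\mathbb{Z}_{n_i}]$ together with the cyclic fact that the augmentation ideal of $\mathbb{F}_p[\mathbb{Z}_{p^a}]$ is principal and nilpotent of index $p^a$. For any sequence $g_1\cdot\ldots\cdot g_\ell$ of length $\ell = 1 + \sum(n_i - 1)$, the product $\prod_{i=1}^{\ell}(g_i - 1_G) \in I^\ell$ vanishes, giving
\[
\sum_{S \subseteq [1,\ell]} (-1)^{\ell - |S|} \prod_{i \in S} g_i \; = \; 0 \quad \text{in } \mathbb{F}_p[G].
\]
Since the empty-set term contributes $(-1)^\ell \cdot 1_G \neq 0$, some nonempty subset $S$ must satisfy $\sum_{i \in S} g_i = 0_G$, i.e., $T$ has a nonempty zero-sum subsequence of length at most $\ell$.

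I expect the main obstacle to be the rank-$2$ upper bound, where the divisibility $n_1 \mid n_2$ must be carefully exploited in the iterative peeling so that the counts close at exactly $n_1 + n_2 - 1$ (a purely additive proof without divisibility would only yield $n_1 n_2$-type bounds); by contrast, the $p$-group argument reduces cleanly to the nilpotency computation, and both the lower bound and the cyclic case are routine.
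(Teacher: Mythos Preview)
The paper does not prove this lemma at all; it is quoted from the literature with citations to Olson and van Emde Boas, so there is no in-paper argument to compare against and your plan stands or falls on its own.

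Your lower bound via the sequence $T_0=\prod_{i=1}^{r}e_i^{\,n_i-1}$ and the cyclic upper bound via partial sums are standard and correct. Your $p$-group argument is essentially Olson's: the augmentation ideal of $\mathbb{F}_p[G]$ has nilpotency index $1+\sum_{i}(n_i-1)$, and expanding $\prod_{i=1}^{\ell}(g_i-1)=0$ and reading off the coefficient of $1_G$ forces a nonempty zero-sum subset. That part is fine.

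The rank-$2$ sketch, however, has a genuine gap. With $\pi\colon G\to\mathbb{Z}_{n_2}$, a single minimal zero-sum peel in $\pi(T)$ can consume up to $n_2$ terms; starting from only $n_1+n_2-1$ terms, one peel may leave as few as $n_1-1<n_2$ terms, so you cannot guarantee even a second peel, let alone $n_1$ disjoint ones. The divisibility $n_1\mid n_2$ does nothing to bound the length of a minimal zero-sum in $\mathbb{Z}_{n_2}$, so the promised ``careful length-counting'' has no leverage here. Olson's actual proof of the rank-$2$ upper bound (Part~II of his 1969 papers) is substantially deeper than any such peeling reduction and relies on additive tools of Kneser--Davenport type; an alternative route is to combine the $p$-group result across the Sylow components of $G$, but that reduction (showing ${\rm D}(G_1\oplus G_2)\le {\rm D}(G_1)+{\rm D}(G_2)-1$ for coprime $|G_1|,|G_2|$) is itself nontrivial. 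You were right to flag rank $2$ as the obstacle; the outline as written does not overcome it.
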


\begin{lemma} (see \cite{GRuzsa} Corollary 2.1.4) \label{Lemma 2group} \ Let $G\cong \mathbb{Z}_2^r$ with $r\geq 1$. A sequence $S\in \mathcal{F}(G)$ is zero-sum free if and only if $S$ is squarefree (i.e., $S$ contains no repeated terms) and ${\rm supp}(S)$ is an independent set.
\end{lemma}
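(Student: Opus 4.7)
The plan is to prove both directions directly from the structural fact that in $\mathbb{Z}_2^r$ every element is its own inverse, so that any sum of terms (with multiplicity) collapses to a sum over the terms appearing an odd number of times. The main observation to use at every step is that for $a\in \mathbb{Z}_2^r$ one has $2a=0$, and therefore a sum $\sum a_i$ of elements of $\mathbb{Z}_2^r$ (with repetition) equals the sum of those $a_i$ whose total multiplicity in the sum is odd.

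For the forward direction, suppose $S$ is zero-sum free. First I would rule out repeated terms: if some $a\in G$ satisfied ${\rm v}_a(S)\ge 2$, then $a\cdot a$ is a nonempty subsequence of $S$ with sum $2a=0_G$, contradicting the zero-sum freeness of $S$; hence $S$ is squarefree. Next I would show ${\rm supp}(S)$ is independent: otherwise there would exist pairwise distinct elements $a_1,\dots,a_k\in{\rm supp}(S)$ with a nontrivial relation $\sum_{i=1}^k a_i=0_G$ (independence in an elementary abelian $2$-group is equivalent to having no nonempty vanishing subsum of distinct elements, since all nonzero scalars are $1$). Since $S$ is squarefree and each $a_i$ lies in ${\rm supp}(S)$, the product $a_1\cdot a_2\cdots a_k$ is a genuine nonempty subsequence of $S$ with zero sum, again contradicting zero-sum freeness.

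For the converse direction, suppose $S$ is squarefree and $\mathrm{supp}(S)=\{a_1,\dots,a_m\}$ is independent. Any nonempty subsequence $S'$ of $S$ is again squarefree, so $S'=a_{i_1}\cdot a_{i_2}\cdots a_{i_s}$ for some nonempty subset $\{a_{i_1},\dots,a_{i_s}\}\subset{\rm supp}(S)$ of pairwise distinct terms. Then $\sigma(S')=a_{i_1}+\cdots+a_{i_s}$, which is nonzero by independence of $\mathrm{supp}(S)$. Hence no nonempty subsequence of $S$ sums to zero, i.e.\ $S$ is zero-sum free.

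There is no real obstacle; the only care needed is the bookkeeping in the forward direction to translate an abstract linear dependence into an actual subsequence of $S$, which works precisely because squarefreeness has already been established and because in $\mathbb{Z}_2^r$ the only nonzero scalar is $1$, so a linear dependence reduces to a vanishing sum of distinct elements of the support.
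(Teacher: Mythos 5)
Your proof is correct. Note that the paper itself offers no proof of this lemma: it is quoted as Corollary 2.1.4 of the cited reference \cite{GRuzsa}, so there is nothing internal to compare against. Your argument is the standard one, viewing $\mathbb{Z}_2^r$ as a vector space over $\mathbb{F}_2$: repeated terms give the zero-sum pair $a\cdot a$, a linear dependence among distinct support elements (including the degenerate case $0_G\in{\rm supp}(S)$) gives a nonempty zero-sum subsequence because the only nonzero scalar is $1$, and conversely independence of the support forces every nonempty squarefree subsequence to have nonzero sum. The only cosmetic remark is that in the forward direction the appeal to squarefreeness is not needed to extract the subsequence $a_1\cdot\ldots\cdot a_k$ from $S$; it suffices that the $a_i$ are pairwise distinct elements of ${\rm supp}(S)$.
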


Now we are in a position to prove Theorem \ref{theorem in group exp(G)geq 6}.

\noindent {\sl Proof of Theorem \ref{theorem in group exp(G)geq 6}}.
If
(i) or (ii) holds, then $\mathcal A (G)$ is a minimal set by Corollary \ref{Cor A(G) is not minimal} trivially. Suppose $G\cong \mathbb{Z}_2^r$ for some $r\geq 1$. Take any $V\in\mathcal A (G)$. To prove $\mathcal A (G)$ is a minimal set, by the arbitrariness of $V$, it suffices to show that $V$ belongs to every minimal set contained in $\mathcal A (G)$, i.e., there exists a sequence $T$ of length ${\rm D}(G)$ such that every minimal zero-sum subsequence of $T$ is equal to $V$ by Lemma \ref{lemma criterion} (i).

If $V=0$, the existence of $T$ follows by taking $T=V\cdot L$ where $L\in \mathcal{F}(G)$ is a zero-sum free sequence of length ${\rm D}(G)-1$. Hence, we may assume that $V\neq 0$ and so $|V|\geq 2$. Choose a term $a\mid V$. Say $V\cdot a^{-1}=b_1\cdot\ldots\cdot b_k$ ($k\geq 1$).
 Since $V\cdot a^{-1}$ is zero-sum free, it follows from Lemma \ref{Lemma 2group} that $\{b_1,\ldots,b_k\}$ is an independent set. Since $G$ is a vector space over the field $\mathbb{F}_2$, we see that $\{b_1,\ldots,b_k\}$ is contained in a basis, say $\{b_1,\ldots, b_r\}$, of the vector space $G$ of dimension $r$. Since $\sigma(V)=0$, it follows that $a=-\sum\limits_{i=1}^k b_i=\sum\limits_{i=1}^k b_i$. Since  $\{b_1,\ldots, b_r\}$ is a basis, it follows that $a=\sum\limits_{i=1}^k b_i$ is the unique way to represent the element $a$ as a linear combination of $\{b_1,\ldots, b_r\}$, and so $V$ is the unique nonempty zero-sum subsequence of the sequence $T=a\cdot b_1\cdot\ldots\cdot b_r$. By Lemma \ref{Lemma Davenport precise value}, $|T|=r+1={\rm D}(G)$.  This completes the argument for the sufficiency.

To prove the necessity, it suffices to show that when $\exp(G)\geq 4$ and $G\not\cong \mathbb{Z}_4$ and $G\not\cong \mathbb{Z}_5$, the set $\mathcal A (G)$ is not a minimal set with respect to ${\rm D}(G)$.

If $G$ is a cyclic group, then $|G|\geq 6$, then the conclusion follows from Corollary \ref{Cor A(G) is not minimal} . Hence, we assume that $G$ is not cyclic. Say $G\cong \mathbb{Z}_{n_1}\oplus \cdots  \oplus \mathbb{Z}_{n_r}$ with $r\geq 2$ and $1< n_1 \mid\cdots\mid n_r$. Then $n_r=\exp(G)\geq  4$. Let $\{e_1,\ldots,e_r\}$ be a basis of $G$ where ${\rm ord}(e_i)=n_i$ for $i\in [1,r]$.
Let $$V=(\prod\limits_{i\in [1,r-1]}e_i^{n_i-1})\cdot e_r^{n_r-3}\cdot (2e_r)\cdot (e_1+\cdots+e_r).$$
 Observe that $V$ is a minimal zero-sum sequence over $G$. By Lemma \ref{Lemma Davenport precise value}, we see that $|V|=\sum\limits_{i=1}^r (n_i-1)<{\rm D}(G)$. Moreover, we can verify that for any element $c\in G$, then $V\cdot c$ contains a minimal zero-sum subsequence which is not equal to $V$. By applying Lemma \ref{lemma criterion} (i), we conclude that
$V\notin \bigcap\limits_{\Omega \in \Gamma} \Omega$
where $\Gamma=\{\Omega\subset \mathcal{A}(G): \Omega \mbox{ is a minimal set} \}$. This implies that $\mathcal A (G)$ is not a minimal set with respect to ${\rm D}(G)$, completing the proof. \qed

Notice that Theorem \ref{theorem in group exp(G)geq 6} left us with the case of $\exp(G)=3$, i.e., $G\cong\mathbb{Z}_3^r$ with $r\geq 1$.  Then we close this section with the following conjecture.

\begin{conj} \  $\mathcal A (G)$ is a minimal set with respect to ${\rm D}(G)$ if $G\cong\mathbb{Z}_3^r$ for all $r\geq 1$.
\end{conj}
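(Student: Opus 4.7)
The plan is to invoke Lemma~\ref{lemma criterion}(i): since ${\rm D}(\mathbb{Z}_3^r)=2r+1$ by Lemma~\ref{Lemma Davenport precise value}, it suffices to exhibit, for every $V\in\mathcal A(\mathbb{Z}_3^r)$, a sequence $T$ of length $2r+1$ whose only minimal zero-sum subsequence is $V$; equivalently, every nonempty zero-sum subsequence of $T$ must contain $V$. I would set $T=V\cdot L$ with $|L|=2r+1-|V|$ and construct $L$ according to the structure of $V$, proceeding by induction on $r$, with the base case $r=1$ handled by inspection of $\mathcal A(\mathbb{Z}_3)=\{0,\,1\cdot 2,\,1^3,\,2^3\}$.

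Three cases I would dispose of first. If $V=0$, take $L=e_1^2\cdot\ldots\cdot e_r^2$ for any basis $\{e_1,\ldots,e_r\}$ of $G$; this $L$ is zero-sum free of length $2r$, and $0$ is then the unique nonempty zero-sum subsequence of $0\cdot L$. If $|V|=2r+1$, take $T=V$. If $H:=\langle\text{supp}(V)\rangle\subsetneq G$, so $H\cong \mathbb{Z}_3^{r'}$ with $r'<r$, write $G=H\oplus H'$ and apply the inductive hypothesis inside $H$ to produce $T_0\in\mathcal F(H)$ of length $2r'+1$ with $V$ as its unique minimal zero-sum subsequence. Appending $L_0=f_1^2\cdot\ldots\cdot f_{r-r'}^2$ over a basis of $H'$ yields $T=T_0\cdot L_0$ of length $2r+1$; the direct sum $G=H\oplus H'$ forces any zero-sum subsequence of $T$ to split as $W_H\cdot W_{H'}$ with $\sigma(W_H)=0$ in $H$ and $\sigma(W_{H'})=0$ in $H'$, so $W_{H'}=\varepsilon$ (by zero-sum freeness of $L_0$) and $W_H\in\{\varepsilon,V\}$ (by the inductive hypothesis).

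The main obstacle is the remaining case, where $\text{supp}(V)$ generates $G$ and $|V|<2r+1$. Here no direct-sum trick is available and the construction of $L$ must be carried out inside $G$ itself. I would build $L$ one term at a time, at each step appending some $b$ whose multiplicity in the current $V\cdot L'$ is strictly less than $2$ (so that adding $b$ does not create a zero-sum via $b^3=0$) and which avoids the finite set of forbidden sums arising from subsequences of $V\cdot L'$ that contain all current copies of $b$ but omit some term of $V$. In the single-step case $|V|=2r$ this reduces to finding $b\in G\setminus\{0\}$ with either $v_b(V)=0$ and $-b\notin\Sigma(V)$, or $v_b(V)=1$ and $b\notin\Sigma(V\cdot b^{-1})$; the existence of such a $b$ should follow from a counting argument exploiting the bound $|\text{supp}(V)|\leq|V|\leq 2r$ against $|G|=3^r$. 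For general $|V|<2r+1$ the iteration requires careful control of how the forbidden-sum sets evolve as $L$ grows, and I expect this will require a finer structural description of minimal zero-sum sequences of given length in $\mathbb{Z}_3^r$ (complementing the characterization of Lemma~\ref{Lemma 2group} for $\mathbb{Z}_2^r$), which is the technical heart of the problem and the reason the assertion is posed as a conjecture rather than a theorem.
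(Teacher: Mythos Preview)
The statement is posed in the paper as an open \emph{conjecture}, not a theorem; the paper offers no proof. So there is nothing to compare your attempt against, and your closing sentence correctly identifies why: the main case is genuinely unresolved.

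Your reductions are sound. The appeal to Lemma~\ref{lemma criterion}(i) is the right framework, the base case $r=1$ is fine, the $V=0$ and $|V|=2r+1$ cases are trivial, and the induction via a direct-sum complement when $\langle\mathrm{supp}(V)\rangle\subsetneq G$ is clean and correct.

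The gap is exactly where you locate it, and it is worth being explicit that the ``counting argument'' you sketch for $|V|=2r$ does not go through as written. Already in $\mathbb{Z}_3^2$ one can have $\Sigma(V)=G$ for a minimal zero-sum $V$ of length~$4$ (e.g.\ $V=e_1\cdot e_2\cdot(e_1+e_2)^2$), so the option ``$v_b(V)=0$ and $-b\notin\Sigma(V)$'' may be vacuous. In that example your second option (taking $b=e_1$, since $e_1\notin\Sigma(V\cdot e_1^{-1})$) still succeeds, but this shows the two options interact and a bare cardinality bound on $\Sigma(V)$ versus $3^r$ is not enough; one would need to argue that at least one of the two options is always available, which already requires structural input about $V$. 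For smaller $|V|$ the iterative step compounds this difficulty, as the forbidden sets grow with $L$. Your diagnosis that a $\mathbb{Z}_3$-analogue of Lemma~\ref{Lemma 2group} (a usable structural description of minimal zero-sum sequences in $\mathbb{Z}_3^r$) is what is missing is accurate; absent such a tool, the outline remains a plausible strategy rather than a proof.
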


\section{Weighted Davenport constant for infinite abelian groups}

We start this section by the following definition of weighted Davenport constant for abelian groups.

\begin{definition} \cite{ZengYuanDiscrete} \ {\sl Let $F$ and $G$ be abelian groups. For any nonempty subset $\Psi$ of ${\rm Hom}(F, G)$, we define the $\Psi-$weighted Davenport constant of $G$, denoted ${\rm D}_{\Psi}(G)$, to be the least positive integer $\ell$ (if it exists, otherwise we let ${\rm D}_{\Psi}(G)=\infty$)
such that every sequence $T$ over $F$ of length
$\ell$ is not $\Psi$-zero-sum free,  i.e.,  there exists $a_1\cdot\ldots\cdot a_k\mid T$ ($k>0$),
such that $\sum\limits_{i=1}^{k}  \psi_i(a_i)=0_G$ for some $\psi_1,\ldots,\psi_{k}\in \Psi$.}
\end{definition}

Remark that Yuan and Zeng \cite{ZengYuanDiscrete}  introduce the above weighted Davenport constant for finite abelian groups aiming to prove the weighted generalization of the well-known zero-sum Gao's Theorem, and that their original definition  can be applied for infinite abelian groups as above. In order to investigate the weighted Davenport constant and some other zero-sum invariants in the realm of infinite abelian groups more systematically, we introduce the {\bf weighted universal zero-sum invariant}  as follows.

\begin{definition}\label{Definition Davenportinfiniteabelian}  \ {\sl Let $F$ and $G$ be abelian groups, and let $\Psi$ be a nonempty subset of ${\rm Hom}(F, G)$.
For any nonempty subset $\Omega \subset \mathcal F (G)$,  define ${\mathsf d}_{\Omega; \Psi}(G)$ to be the least positive integer $\ell$ (if it exists, otherwise we let ${\mathsf d}_{\Omega; \Psi}(G)=\infty$) such that
every sequence over $F$ of length $\ell$ has a $\Psi$-subsequence belonging to $\Omega$.
}
\end{definition}

We remark that the notion ${\mathsf d}_{\Omega; \Psi}(G)$ generalize the universal zero-sum invariant:
if $F=G$ and $\Psi$ is a singleton containing only the automorphism $1_G$, then  ${\mathsf d}_{\Omega; \Psi}(G)$ reduce to the invariant ${\mathsf d}_{\Omega}(G)$. On the other hand, for any abelian groups $F, G$, and any sets $\Psi\subset {\rm Hom}(F, G)$ and $\Omega\subset \mathcal{F}(G)$, the weighted universal invariant ${\mathsf d}_{\Omega; \Psi}(G)$ can be represented as an ordinary universal zero-sum invariant of the group $F$ as ${\mathsf d}_{\Omega; \Psi}(G)={\mathsf d}_{\Omega'}(F)$, where $\Omega'=\{T\in \mathcal{F}(F): \Psi(T)\cap \Omega\neq \emptyset\}$.
However, for the convenience of tacking problems we shall need this specified notation for weighted universal zero-sum invariant.

Now we show the following result on the existence of minimal sets associated with the  weighted universal zero-sum invariant, although such kind of questions usually turn to be plain in the setting of {\sl finite} abelian groups.

\begin{theorem}\label{Lemma result by Zorn's Lemma} Let $F$ and $G$ be abelian groups, and let $\Psi$ be a finite nonempty subset of ${\rm Hom}(F, G)$. For any $A\subset \mathcal{F}(G)$ such that ${\mathsf d}_{A; \Psi}(G)=t<\infty$, then there exists a set $\Omega\subset A$ which is minimal (with respect to set theoretic inclusion) with the property ${\mathsf d}_{\Omega; \Psi}(G)=t$.
\end{theorem}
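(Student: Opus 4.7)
Since the statement is labelled ``by Zorn's Lemma'', the natural strategy is to apply Zorn's Lemma in its ``minimum'' form to the poset
$$\mathcal{P}=\{B\subset A:{\mathsf d}_{B;\Psi}(G)=t\},$$
ordered by set-theoretic inclusion. The poset is non-empty because $A\in\mathcal P$. By the dual form of Zorn's Lemma, it suffices to prove that every chain $\{B_\lambda\}_{\lambda\in\Lambda}\subset\mathcal P$ (totally ordered by inclusion) admits a lower bound in $\mathcal P$. The obvious candidate is $B=\bigcap_{\lambda\in\Lambda}B_\lambda$, so the whole proof reduces to showing $B\in\mathcal P$, i.e.\ ${\mathsf d}_{B;\Psi}(G)=t$.

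The inequality ${\mathsf d}_{B;\Psi}(G)\geq t$ is immediate from $B\subset A$ together with the monotonicity observation (analogous to Remark~\ref{remark}) that $\Omega_1\subset\Omega_2$ forces ${\mathsf d}_{\Omega_1;\Psi}(G)\geq{\mathsf d}_{\Omega_2;\Psi}(G)$. The real content is the reverse inequality ${\mathsf d}_{B;\Psi}(G)\leq t$; this is the main obstacle, and it is exactly where the hypothesis that $\Psi$ is \emph{finite} enters. Given any sequence $T\in\mathcal F(F)$ of length $t$, I would consider the finite set
$$S_T=\bigcup_{V\mid T}\Psi(V)\subset\mathcal F(G)$$
of all $\Psi$-subsequences of $T$. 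Because $|\Psi|<\infty$ and $T$ has only finitely many subsequences $V$ (each of length $\leq t$), we have $|S_T|\leq(1+|\Psi|)^t<\infty$.

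For each $\lambda$, the hypothesis ${\mathsf d}_{B_\lambda;\Psi}(G)=t$ gives $S_T\cap B_\lambda\neq\emptyset$. The family $\{S_T\cap B_\lambda\}_{\lambda\in\Lambda}$ is a chain of non-empty subsets of the \emph{finite} set $S_T$, hence this chain has a minimum element $S_T\cap B_{\lambda_0}$ that is contained in every $S_T\cap B_\lambda$; in particular
$$\emptyset\neq S_T\cap B_{\lambda_0}=\bigcap_{\lambda\in\Lambda}(S_T\cap B_\lambda)=S_T\cap B.$$
Thus $T$ has a $\Psi$-subsequence in $B$, and since $T$ was arbitrary of length $t$, we conclude ${\mathsf d}_{B;\Psi}(G)\leq t$. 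Combined with the reverse inequality, $B\in\mathcal P$, so $B$ is the required lower bound of the chain. Zorn's Lemma now yields a minimal element $\Omega\in\mathcal P$, which is the desired set. The only delicate point is the finite-pigeonhole step in the chain-bound argument; if $\Psi$ were infinite, $S_T$ could be infinite and the intersection along the chain might collapse to $\emptyset$, so the finiteness hypothesis on $\Psi$ is essential and should be highlighted as a remark after the proof.
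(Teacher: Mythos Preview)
Your proposal is correct and follows essentially the same route as the paper: both apply Zorn's Lemma to the family of subsets $B\subset A$ with ${\mathsf d}_{B;\Psi}(G)=t$, take the intersection along a chain as the candidate bound, and use the finiteness of $\Psi$ (hence of the set of $\Psi$-subsequences of a fixed $T$ of length $t$) to show this intersection still lies in the family. The only cosmetic difference is that the paper argues the inequality ${\mathsf d}_{B;\Psi}(G)\leq t$ by contradiction (listing the $\Psi$-subsequences $V_1,\ldots,V_m$ and locating a single chain member avoiding them all), whereas you phrase the same step directly via the minimum of the finite chain $\{S_T\cap B_\lambda\}$.
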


\begin{proof} The proof is by applying Zorn's Lemma. Let $\mathcal{S}$ be the set of all subsets $C\subset A$ such that $d_{C;\Psi}(G)=t$. Since $A\in \mathcal{S}$, the set $\mathcal{S}$ is not empty.  Partially order $\mathcal{S}$ as: $C_1\prec C_2\Leftrightarrow C_1\supset C_2$ for any $C_1,C_2\in \mathcal{S}$. In order to apply Zorn's Lemma we must show that every chain $\{C_i: i\in I\}$ in $\mathcal{S}$ has an upper bound in $\mathcal{S}$. Let
\begin{equation}\label{equation E=cap Ci}
E=\bigcap\limits_{i\in I}C_i.
\end{equation}
To prove $E$ is an upper bound of the chain $\{C_i: i\in I\}$ in $\mathcal{S}$, it suffices to show that $E\in\mathcal{S}$, i.e.,
\begin{equation}\label{equation dE(G)=D(G)}
d_{E;\Psi}(G)=t.
\end{equation}
Since $E\subset C_i$, it follows from the definition that  $d_{E;\Psi}(G)\geq d_{C_i;\Psi}(G)=t$ for any $i\in I$.  To prove $d_{E;\Psi}(G)=t$, assume to the contrary that $d_{E;\Psi}(G)>t$. Then we find a sequence $T$ over $F$ of length $t$ such that there exists no $\Psi$-subsequence of $T$ belonging to $E$.
 Recall that both $|T|$ and $\Psi$ are finite. Let $\{V_1,\ldots,V_m\}$ be all nonempty $\Psi$-subsequences of $T$ where $m>0$. For each $i\in [1,m]$, since $V_i\notin E$, it follows that there exists some $k_i\in I$ such that $V_i\notin C_{k_i}$. Since $\{C_i: i\in I\}$ is a chain, it follows that there exists some $h\in [1,m]$ such that $C_{k_i} \prec C_{k_h}$, i.e., $C_{k_i}\supset C_{k_h}$, for each $i\in [1,m]$. Then $V_i\notin C_{k_h}$ for each $i\in [1,m]$, i.e., there exists no $\Psi$-subsequence of $T$ belonging to $C_{k_h}$, which implies that $d_{C_{k_h};\Psi}(G)\geq  |T|+1=t+1$, a contradiction.  This proves \eqref{equation dE(G)=D(G)}, and thus $E$ is an upper bound of the chain $\{C_i: i\in I\}$ in $\mathcal{S}$.
Thus the hypothesis of Zorn's Lemma are satisfied and hence $\mathcal{S}$ contains a maximal element. But a maximal element $\Omega$ of $\mathcal{S}$ is obviously the desired set in the theorem.
\end{proof}

We remark that if the condition $|\Psi|<\infty$ is removed in Theorem \ref{Lemma result by Zorn's Lemma}, then
the conclusion does not necessarily hold, which
can be seen as Example \ref{Exam in Z}.

Observed that ${\mathsf d}_{\Omega; \Psi}(G)=1$ if $\varepsilon\in \Omega$. So we also need only to consider the case that $\varepsilon\notin \Omega.$ As remarked before Definition  \ref{Definition Davenportinfiniteabelian},  from the definition we have
$${\rm D}_{\Psi}(G)={\mathsf d}_{\mathcal{B}(G)\setminus \{\varepsilon\}; \Psi}(G)={\mathsf d}_{\mathcal{A}(G); \Psi}(G).$$ In what follows, we shall use the notation ${\rm D}_{\Psi}(G)$ to denote the $\Psi$-weighted Davenport constant of $G$ for simplicity.
To investigate the weighted Davenport constant for infinite abelian groups, we have to answer the following basic question:

`{\sl When does ${\rm D}_{\Psi}(G)<\infty$ hold for given abelians groups $F, G$ and a nonempty subset $\Psi$ of ${\rm Hom}(F, G)$?}'

This question will be investigated in Theorem \ref{Prop finiteness} in terms of ${\rm Im} \ \psi$ and ${\rm Ker} \ \psi$ ($\psi\in \Psi$). To give a necessary and sufficient condition for ${\rm D}_{\Psi}(G)<\infty$, we shall need the following  result on covers of groups obtained by B.H. Neumann.

\begin{lemma}\label{lemma Neumann} \cite{Neumann1,Neumann2}  \ Let $\{a_iG_i\}_{i=1}^k$ be a finite cover of a group $G$ by
left cosets of subgroups $G_1,\ldots, G_k$. Then $G$ is the union
of those $a_iG_i$ with $[G : G_i] <\infty$.
\end{lemma}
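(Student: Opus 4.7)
The plan is to proceed by induction on $k$, the number of cosets in the cover. The base case $k = 1$ is immediate, since $G = a_1 G_1$ forces $G_1 = G$ and $[G : G_1] = 1$. For the inductive step, I would aim to prove the slightly stronger claim: whenever $[G : G_k] = \infty$, the coset $a_k G_k$ is redundant in the cover, so that deleting it produces a cover by $k - 1$ cosets to which the inductive hypothesis applies.

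To prove redundancy, I argue by contradiction: suppose some $x \in a_k G_k$ is not covered by $\bigcup_{i < k} a_i G_i$. Since $[G : G_k] = \infty$, there are infinitely many cosets of $G_k$, and every coset $bG_k \neq a_k G_k$ is disjoint from $\{x\}$ (hence from $a_k G_k$), so each such $bG_k$ lies wholly inside $\bigcup_{i < k} a_i G_i$. For each $i < k$, the number of cosets of $G_k$ that meet $a_i G_i$ equals the cardinality $[G_i G_k : G_k]$, which by the second isomorphism theorem (cleanly available in the abelian setting that the paper actually uses) equals $[G_i : G_i \cap G_k]$. Split $\{1, \dots, k-1\}$ into \emph{narrow} indices (finite $[G_i G_k : G_k]$) and \emph{wide} indices (infinite $[G_i G_k : G_k]$). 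The narrow cosets together touch only finitely many cosets of $G_k$, so infinitely many cosets $bG_k$ are covered exclusively by wide cosets. Fixing one such $bG_k$ and translating back to $G_k$ yields a cover of $G_k$ by at most $k-1$ cosets of subgroups $G_i \cap G_k$ with $i$ wide; the inductive hypothesis applied \emph{inside} $G_k$ then forces some $G_i \cap G_k$ (with $i$ wide) to have finite index in $G_k$.

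The crux is converting this into a contradiction using the identities
\[
[G : G_i \cap G_k] \;=\; [G : G_i]\cdot[G_i : G_i \cap G_k] \;=\; [G : G_k]\cdot[G_k : G_i \cap G_k],
\]
together with the extracted facts $[G_i : G_i \cap G_k] = \infty$, $[G_k : G_i \cap G_k] < \infty$, and $[G : G_k] = \infty$; depending on whether $[G : G_i]$ turns out finite or infinite, one either peels off $a_i G_i$ and descends symmetrically with the roles of $G_i$ and $G_k$ swapped, or iterates the construction inside $G_i$ until the descent terminates (the strict decrease of $k$ in the outer induction guarantees that it must). The main obstacle is exactly this index bookkeeping, together with the technical nuisance that in a general (nonabelian) group the product $G_i G_k$ need not be a subgroup; I would handle the latter by first replacing each $G_i$ with its normal core in $G$ (which has finite index in $G$ if and only if $G_i$ does), so that the second isomorphism theorem and the coset-counting arguments above become available without qualification.
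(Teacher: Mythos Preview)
The paper does not give a proof of this lemma; it is cited from Neumann's papers and used as a black box. So there is no proof in the paper to compare against, and the question is whether your outline stands on its own.

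It does not. The step you label ``the crux'' is a genuine gap. After applying the inductive hypothesis inside $G_k$, you have extracted a wide index $i$ satisfying $[G_i:G_i\cap G_k]=\infty$, $[G_k:G_i\cap G_k]<\infty$, and $[G:G_k]=\infty$. But the tower identity
\[
[G:G_i\cap G_k]=[G:G_i]\,[G_i:G_i\cap G_k]=[G:G_k]\,[G_k:G_i\cap G_k]
\]
is perfectly consistent with these data whether $[G:G_i]$ is finite or infinite: both products are simply infinite, and no contradiction emerges from index arithmetic. Your two proposed escapes are not well-defined. ``Swapping the roles of $G_i$ and $G_k$'' requires both $[G:G_i]=\infty$ and that $a_iG_i$ be non-redundant, neither of which you have in the finite-index branch; ``iterating inside $G_i$'' requires a cover of $G_i$ by strictly fewer than $k$ cosets, which you also do not have. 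The induction parameter $k$ simply does not decrease along either branch. Separately, the normal-core reduction for the nonabelian case is broken: when $[G:G_i]=\infty$ the core of $G_i$ may have infinite index in $G_i$, so replacing $a_iG_i$ by cosets of the core can turn a finite cover into an infinite one. (This reduction is also unnecessary, since the coset-count you need is already $[G_i:G_i\cap G_k]$ in an arbitrary group.)

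Neumann's argument avoids this by inducting not on $k$ but on the number $m$ of \emph{distinct} infinite-index subgroups occurring in the cover. If $H$ is one such subgroup with $J=\{i:G_i=H\}$, pick a coset $cH$ outside the finite set $\{a_jH:j\in J\}$; then $cH\subset\bigcup_{i\notin J}a_iG_i$, and left-translating gives $a_jH\subset\bigcup_{i\notin J}(a_jc^{-1}a_i)G_i$ for every $j\in J$. Substituting eliminates $H$ entirely and yields a finite cover using only $m-1$ distinct infinite-index subgroups. This shows no group is a finite union of cosets of infinite-index subgroups; the full statement then follows by passing to the finite-index subgroup $\bigcap_{[G:G_i]<\infty}G_i$ and observing that any uncovered coset of it would violate the case just proved.
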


\begin{lemma}\label{lemma sufficient condition} Let $F, G$ be abelian groups, and let $\Psi$ be a nonempty subset of  ${\rm Hom}(F, G)$. Then ${\rm D}_{\Psi}(G)<\infty$ provided that there exists some $\tau\in \Psi$  such  that  $|X\diagup {\rm Ker} \ \tau|<\infty$ where $X=F\setminus (\bigcup\limits_{\psi\in \Psi} {\rm Ker} \ \psi)$; furthermore,  ${\rm D}_{\Psi}(G)\leq |X\diagup {\rm Ker} \ \tau|+1$.
\end{lemma}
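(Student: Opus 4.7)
The plan is to exhibit, for an arbitrary sequence $T = a_1 \cdot \ldots \cdot a_{n+1}$ over $F$ of length $n+1$, where $n = |X/\mathrm{Ker}\,\tau|$, a nonempty $\Psi$-zero-sum subsequence of $T$; this directly yields ${\rm D}_{\Psi}(G) \leq n + 1$ and, in particular, ${\rm D}_{\Psi}(G) < \infty$. The key device will be the partial sums $S_k = a_1 + a_2 + \cdots + a_k \in F$ for $k \in [1, n+1]$, analyzed via pigeonhole in the quotient $F/\mathrm{Ker}\,\tau$.

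I would split into two cases. First, suppose some $S_k$ lies in $\bigcup_{\psi \in \Psi} \mathrm{Ker}\,\psi$; pick $\psi \in \Psi$ with $\psi(S_k) = 0_G$, and observe that attaching the weight $\psi$ to every term of the prefix $a_1 \cdot \ldots \cdot a_k$ produces $\sum_{i=1}^{k} \psi(a_i) = \psi(S_k) = 0_G$, which is a nonempty $\Psi$-zero-sum subsequence of $T$. Otherwise $S_k \in X$ for every $k \in [1, n+1]$. Let $\pi \colon F \to F/\mathrm{Ker}\,\tau$ be the canonical projection; then $\pi(S_1), \ldots, \pi(S_{n+1})$ are $n+1$ elements of the set $\pi(X)$, which by the interpretation of $|X/\mathrm{Ker}\,\tau|$ has cardinality exactly $n$. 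Pigeonhole supplies indices $1 \leq i < j \leq n+1$ with $\pi(S_i) = \pi(S_j)$, i.e.\ $S_j - S_i = a_{i+1} + \cdots + a_j \in \mathrm{Ker}\,\tau$. Applying the weight $\tau$ to each term of the subsequence $a_{i+1} \cdot \ldots \cdot a_j$ then gives $\sum_{\ell = i+1}^{j} \tau(a_\ell) = \tau(S_j - S_i) = 0_G$, another nonempty $\Psi$-zero-sum subsequence of $T$.

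I do not foresee any genuine obstacle. The single point worth pinning down is that $|X/\mathrm{Ker}\,\tau|$ must be interpreted as $|\pi(X)|$ (since $X$ is typically not a subgroup of $F$), which is forced by the hypothesis being meaningful. Once that is fixed, the escape valve $S_k \in \mathrm{Ker}\,\psi \Rightarrow$ done merges with the pigeonhole on the remaining partial sums to finish the argument in essentially one pass.
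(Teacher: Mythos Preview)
Your proof is correct and follows essentially the same approach as the paper: both use the partial sums $S_k=a_1+\cdots+a_k$ and analyze their images in $F/\mathrm{Ker}\,\tau$, with the escape that any $S_k$ landing in some $\mathrm{Ker}\,\psi$ immediately yields a $\Psi$-zero-sum prefix. The only cosmetic difference is that the paper argues by contradiction (assuming $T$ is $\Psi$-zero-sum free forces the $S_k$ to lie in pairwise distinct cosets of $\mathrm{Ker}\,\tau$, so one of them must fall outside $X$), whereas you split into two cases and, when all $S_k\in X$, invoke pigeonhole to find $S_j-S_i\in\mathrm{Ker}\,\tau$; these are logically equivalent rearrangements of the same idea.
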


\begin{proof} Suppose that $|X\diagup {\rm Ker} \ \tau|<\infty$ for some $\tau\in \Psi$. Let \begin{equation}\label{equation ell=Xdiagup}
\ell=|X\diagup {\rm Ker} \ \tau|+1.
\end{equation}
 To show ${\rm D}_{\Psi}(G)\leq \ell$, we assume to the contrary that there exists a $\Psi$-zero-sum free sequence $T=a_1\cdot \ldots\cdot a_{\ell}$ over $F$ of length $\ell$. Then $\tau(a_1)\cdot\ldots\cdot \tau(a_{\ell})$ is zero-sum free. It follows that $\tau(a_1), \tau(a_1)+\tau(a_2),\ldots,\tau(a_1)+\cdots+\tau(a_{\ell})$ are $\ell$ pairwise distinct elements of ${\rm Im }\ \tau$, and so
$a_1, a_1+a_2,\ldots,a_1+\cdots+a_{\ell}$ are from distinct cosets of ${\rm Ker} \ \tau$. It follows from \eqref{equation ell=Xdiagup} that there exists some $t\in [1,\ell]$ such that $a_1+\cdots+a_{t}\notin X$, i.e., $a_1+\cdots+a_{t}\in {\rm Ker} \ \theta$ for some $\theta\in \Psi$.  Then $\theta(a_1)\cdot\ldots\cdot \theta(a_t)$ is a $\Psi$-zero-sum subsequence of $T$, a contradiction.
\end{proof}

\begin{theorem}\label{Prop finiteness} Let $F, G$ be abelian groups, and let $\Psi$ be a nonempty finite subset of  ${\rm Hom}(F, G)$. Then the following conditions are equivalent.

(i) ${\rm D}_{\Psi}(G)<\infty$;

(ii) There exists a finite cover of the group $F$ by left cosets of ${\rm Ker} \ \psi$ ($\psi\in \Psi$), i.e.,
$$F=\bigcup\limits_{\psi\in \Psi}\left(\bigcup\limits_{i=1}^{k_{\psi}}(a_{\psi, i}+ {\rm Ker}  \ \psi)\right),$$ where $k_{\psi}\in \mathbb{N}$ and $a_{\psi, 1}, \ldots, a_{\psi, k_{\psi}}\in F$ for each $\psi\in \Psi$;

(iii) $|{\rm Im} \ \tau|<\infty$ for some $\tau\in \Psi$.

\end{theorem}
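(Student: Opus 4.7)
The plan is to prove the theorem via the cyclic chain of implications (iii) $\Rightarrow$ (i) $\Rightarrow$ (ii) $\Rightarrow$ (iii). The first step is a direct appeal to a lemma already proved; the third step invokes Neumann's theorem as cited; the middle step is where the real work occurs and will be handled by a contrapositive inductive construction of arbitrarily long $\Psi$-zero-sum free sequences.

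For (iii) $\Rightarrow$ (i), I apply Lemma \ref{lemma sufficient condition} to a homomorphism $\tau\in\Psi$ with $|{\rm Im}\ \tau|<\infty$. Setting $X=F\setminus\bigcup_{\psi\in\Psi}{\rm Ker}\ \psi$, one has $|X\diagup{\rm Ker}\ \tau|\leq[F:{\rm Ker}\ \tau]=|{\rm Im}\ \tau|<\infty$, so the lemma yields ${\rm D}_\Psi(G)\leq|{\rm Im}\ \tau|+1<\infty$. For (ii) $\Rightarrow$ (iii), I invoke Lemma \ref{lemma Neumann}: if $F$ is a finite union of cosets $a_{\psi,i}+{\rm Ker}\ \psi$ with $\psi\in\Psi$, then Neumann's theorem says $F$ already coincides with the sub-union consisting of those cosets whose underlying kernel satisfies $[F:{\rm Ker}\ \psi]<\infty$; since $F$ itself is covered, this sub-union is nonempty, and so at least one $\psi\in\Psi$ has $|{\rm Im}\ \psi|=[F:{\rm Ker}\ \psi]<\infty$, which is condition (iii).

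The main implication is (i) $\Rightarrow$ (ii), argued by contrapositive. Assume (ii) fails: no finite union of cosets of the subgroups ${\rm Ker}\ \psi$ ($\psi\in\Psi$) covers $F$. I will inductively extend any $\Psi$-zero-sum free sequence $a_1\cdot\ldots\cdot a_m$ by one element. Any $\Psi$-zero-sum subsequence of the candidate extension $a_1\cdot\ldots\cdot a_m\cdot a_{m+1}$ must, by the $\Psi$-zero-sum freeness of the initial segment, contain $a_{m+1}$; it therefore has the form $a_{i_1}\cdot\ldots\cdot a_{i_k}\cdot a_{m+1}$ with weights $\psi_1,\ldots,\psi_k,\psi\in\Psi$ (where $k\geq 0$), and the condition $\psi_1(a_{i_1})+\cdots+\psi_k(a_{i_k})+\psi(a_{m+1})=0_G$ constrains $a_{m+1}$ to lie in a single coset of ${\rm Ker}\ \psi$ (possibly empty, if the required target is not in ${\rm Im}\ \psi$). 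As there are only finitely many choices of index set $I=\{i_1,\ldots,i_k\}\subseteq[1,m]$, weight vector $(\psi_1,\ldots,\psi_k)\in\Psi^{|I|}$, and top weight $\psi\in\Psi$ (using $|\Psi|<\infty$), the forbidden values of $a_{m+1}$ form a finite union of cosets of kernels from $\Psi$. Under the failure of (ii), this finite union is a proper subset of $F$, so a valid $a_{m+1}$ exists, and iteration produces $\Psi$-zero-sum free sequences of every length; hence ${\rm D}_\Psi(G)=\infty$, contradicting (i).

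The chief obstacle will be packaging the inductive step of (i) $\Rightarrow$ (ii): identifying the correct forbidden set and recognizing that it has precisely the cover-theoretic structure whose non-coverage of $F$ is exactly the negation of (ii). Once this is in place, the remaining implications reduce to routine applications of Lemma \ref{lemma sufficient condition} and Lemma \ref{lemma Neumann}; the finiteness of $\Psi$ is used critically in bounding the number of cosets in the forbidden set.
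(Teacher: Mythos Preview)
Your proposal is correct. The implications (iii) $\Rightarrow$ (i) and (ii) $\Rightarrow$ (iii) match the paper's proof exactly (Lemma \ref{lemma sufficient condition} and Lemma \ref{lemma Neumann} respectively). For (i) $\Rightarrow$ (ii), however, you and the paper run essentially the same underlying observation in opposite directions. The paper argues directly: take a $\Psi$-zero-sum free sequence $T=b_1\cdots b_\ell$ of maximal length $\ell={\rm D}_\Psi(G)-1$, set $S=(B_1\cup\{0_G\})+\cdots+(B_\ell\cup\{0_G\})$ with $B_i=\{\psi(b_i):\psi\in\Psi\}$, and observe that for every $f\in F$ the extension $T\cdot f$ must admit a $\Psi$-zero-sum subsequence, forcing $-f\in\bigcup_{\psi\in\Psi}\psi^{-1}(S)$; this exhibits the cover in one stroke. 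Your contrapositive instead assumes no such finite cover exists and builds $\Psi$-zero-sum free sequences of every length by an inductive avoidance argument, noting that the forbidden set for the next term is a finite union of cosets of kernels.

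The core content is identical: the ``bad'' continuations of a $\Psi$-zero-sum free sequence form a finite union of cosets of the kernels ${\rm Ker}\ \psi$. The paper's direct route has the advantage of producing an explicit cover (namely $\bigcup_\psi\psi^{-1}(S)$) and avoids the bookkeeping of an induction; your route makes the role of the hypothesis $|\Psi|<\infty$ more transparent at each step and would generalize more readily if one wanted quantitative control on how fast the forbidden set grows.
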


\begin{proof} (i)$\Rightarrow$ (ii)  Let $T=b_1\cdot\ldots\cdot b_{\ell}$ be a sequence over $F$ of length $\ell={\rm D}_{\Psi}(G)-1$ such that $T$ is $\Psi$-zero-sum free. For each $i\in [1,\ell]$, we denote $B_i=\{\psi(b_i):\psi\in \Psi\}$.
Let $$S=(B_1\cup\{0_G\})+\cdots+(B_{\ell}\cup\{0_G\}).$$
Since $S$ is a finite subset of $G$, we have that $$\bigcup\limits_{\psi\in \Psi}\psi^{-1}(S)=\bigcup\limits_{\psi\in \Psi}\left(\bigcup\limits_{i=1}^{k_{\psi}}(a_{\psi, i}+ {\rm Ker}  \ \psi)\right)$$ where $k_{\psi}\in \mathbb{N}$ and $a_{\psi, 1}, \ldots, a_{\psi, k_{\psi}}\in F$. Then it suffices to show that $F=\bigcup\limits_{\psi\in \Psi}\psi^{-1}(S)$.
Take an arbitrary element $f$ of $F$. Since $|T\cdot f|={\rm D}_{\Psi}(G)$, it follows that $T\cdot f$ is not $\Psi$-zero-sum free any more, which implies that there exists $\tau\in\Psi$ ($\tau$ depends on $f$) such that $\tau(-f)=-\tau(f)\in S$, i.e., $-f\in \tau^{-1}(S)\subset \bigcup\limits_{\psi\in \Psi}\psi^{-1}(S)$.
By the arbitrariness of $f$, we have a finite cover $F=\bigcup\limits_{\psi\in \Psi}\psi^{-1}(S)=\bigcup\limits_{\psi\in \Psi}\left(\bigcup\limits_{i=1}^{k_{\psi}}(a_{\psi, i}+ {\rm Ker}  \ \psi)\right)$ by left cosets of these subgroups ${\rm Ker} \ \psi$ ($\psi\in \Psi$).

(ii)$\Rightarrow$ (iii) Since $\Psi$ is finite, we can obtain an irredundant cover of $F$ out of the cover $F=\bigcup\limits_{\psi\in \Psi}\left(\bigcup\limits_{i=1}^{k_{\psi}}(a_{\psi, i}+ {\rm Ker}  \ \psi)\right).$
By Lemma \ref{lemma Neumann}, we have that there exists some $\tau\in \Psi$ with some coset of ${\rm Ker}  \ \tau$ occurring in that irredundant cover such that $[F: {\rm Ker} \ \tau]<\infty$, and so $|{\rm Im} \ \tau|=[F: {\rm Ker} \ \tau]$ is finite, completing the proof.

(iii)$\Rightarrow$ (i) Note that $|X\diagup { {\rm Ker} \ \tau}|\leq |F\diagup { {\rm Ker} \ \tau}|=|{\rm Im} \ \tau|$. Then the conclusion follows by applying Lemma \ref{lemma sufficient condition}.
\end{proof}

We remark that in the case of $|\Psi|=\infty$,  the condition $|{\rm Im} \ \tau|<\infty$ for some $\tau\in \Psi$ (Theorem \ref{Prop finiteness} (iii)) is not necessary such that ${\rm D}_{\Psi}(G)<\infty$, which can be seen in Example \ref{Exam in Z} too.  In general, we give the following proposition to show this phenomenon.

\begin{prop}\label{theorem general finiteness} Let $t\in \mathbb{N}\cup \{+\infty\}$. There exist abelian groups $F$ and $G$ and a nonempty set $\Psi\subset {\rm Hom}(F, G)$, such that $${\rm D}_{\Psi}(G)=t$$ with $|{\rm Im} \ \psi|=\infty$ for each $\psi\in \Psi$.
\end{prop}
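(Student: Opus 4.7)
My plan is to exhibit explicit witnesses $(F,G,\Psi)$ in each of three regimes.

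For $t=\infty$, I take $F=G=\mathbb{Z}$ and $\Psi=\{1_{\mathbb{Z}}\}$: the constant sequence $1^{\ell}$ is $\Psi$-zero-sum free for every $\ell$, so $D_\Psi(G)=\infty$. For $t=1$, I take $F=\bigoplus_{n\in\mathbb{N}}\mathbb{Z}$, $G=\mathbb{Z}$, and $\Psi=\{\pi_n:n\in\mathbb{N}\}$ the coordinate projections; each $\pi_n$ is surjective so has infinite image, and since every $a\in F$ has finite support some $\pi_n(a)=0$, forcing $D_\Psi(G)=1$.

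The substantive case is $2\le t<\infty$. I will set $r=t-1$, $F=G=\mathbb{Z}^r$, and
$$\Psi=\bigl\{\psi_{a_1,\ldots,a_r}:a_1,\ldots,a_r\in\mathbb{Z}\setminus\{0\}\bigr\},\qquad \psi_{a_1,\ldots,a_r}(x_1,\ldots,x_r)=(a_1x_1,\ldots,a_rx_r);$$
each such $\psi$ has image $a_1\mathbb{Z}\oplus\cdots\oplus a_r\mathbb{Z}$, which is infinite. The goal is to show $D_\Psi(G)=r+1=t$. The key observation is that the $\Psi$-zero-sum condition decouples coordinatewise: writing $v_i=(x_i^1,\ldots,x_i^r)$, a subsequence indexed by $I$ admits a $\Psi$-zero-sum presentation iff for every coordinate $j$ one can find nonzero integers $(a_j^i)_{i\in I}$ with $\sum_{i\in I}a_j^i x_i^j=0$. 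An elementary computation shows this is possible iff either $|I|=1$ and $v_{i_0}=0$, or $|I|\ge 2$ and $|\{i\in I:x_i^j\neq 0\}|\ne 1$. Consequently $(v_1,\ldots,v_\ell)$ is $\Psi$-zero-sum free iff every $v_i\ne 0$ and for every $I\subseteq[\ell]$ with $|I|\ge 2$ some coordinate $j$ has exactly one $i\in I$ with $x_i^j\ne 0$. The lower bound $D_\Psi(G)\ge r+1$ is witnessed by the length-$r$ free sequence $(e_1,\ldots,e_r)$ of standard basis vectors.

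For the upper bound $D_\Psi(G)\le r+1$, which I view as the main step, I will prove by induction on $r$ that any $\Psi$-zero-sum free sequence in $\mathbb{Z}^r$ has length at most $r$ (the base $r=1$ reducing to the identity $x_2 x_1+(-x_1) x_2=0$). Given a free sequence $(v_1,\ldots,v_\ell)$, applying the characterization to $I=[\ell]$ locates a coordinate $j^*$ where exactly one entry $v_{i^*}^{j^*}$ is nonzero. I then delete $v_{i^*}$ and project the remaining vectors onto coordinates $[r]\setminus\{j^*\}$, obtaining a sequence of length $\ell-1$ in $\mathbb{Z}^{r-1}$. The projected sequence remains $\Psi$-zero-sum free: the projected vectors are nonzero (since $v_i^{j^*}=0$ for $i\ne i^*$ forces some other coordinate of $v_i$ to be nonzero), and the unique-coordinate condition persists because the discarded coordinate $j^*$ could never have witnessed it on any $I'\subseteq[\ell]\setminus\{i^*\}$ anyway. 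Induction then yields $\ell-1\le r-1$, so $\ell\le r$. The main subtlety lies in this bookkeeping in the inductive step, which works out cleanly because $j^*$ vanishes identically on the remaining rows.
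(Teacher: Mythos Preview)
Your argument is correct, and it follows a genuinely different route from the paper's. The paper gives a single uniform construction: take an infinite abelian group $F$ whose torsion subgroup $H$ satisfies ${\rm D}(H)=t$ and with $F/H$ noncyclic, cover $F\setminus H$ by infinite cyclic subgroups $\langle f_i\rangle$, and let $\psi_i$ be the composite $F\to F/\langle f_i\rangle\hookrightarrow G=\sum_i F/\langle f_i\rangle$. Sequences inside $H$ then behave exactly as in ${\rm D}(H)$, while any term outside $H$ is killed outright by some $\psi_i$; this handles all $t$ (finite or infinite) at once and ties in with the surrounding discussion of $X/\ker\tau$ in Lemma~\ref{lemma sufficient condition}.

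Your construction instead splits into cases and, for the substantive range $2\le t<\infty$, works entirely inside $\mathbb{Z}^{t-1}$ with the diagonal scaling maps. The gain is that everything is explicit and self-contained: the coordinatewise decoupling reduces the $\Psi$-zero-sum question to an elementary linear-algebra fact over $\mathbb{Q}$ (a nontrivial relation $\sum a_iy_i=0$ with all $a_i\ne 0$ exists as soon as at least two $y_i$ are nonzero), and the resulting combinatorial characterisation of $\Psi$-zero-sum free sequences is pleasant in its own right. The inductive deletion--projection step is clean precisely because the distinguished coordinate $j^\ast$ vanishes on every surviving vector, so it can never serve as a witness for any $I'$ not containing $i^\ast$. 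The paper's approach is more conceptual and uniform; yours is more hands-on and yields a concrete structural description of the extremal sequences (they are, up to the projection process, essentially the standard basis).
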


\begin{proof}
Let $F$ be an infinite abelian group
with the torsion subgroup $H$ such that $${\rm D}(H)=t$$ and the quotient group $F/H$ noncyclic.  Note that ${\rm D}(H)=t$ can be obtained by choosing $H$ to be a cyclic group of order $t$ or to be an infinite torsion group according to $t\in \mathbb{N}$ or $t=\infty$, respectively.
Take a subset $\{f_i\in F\setminus H: i\in I\}$ of $F\setminus H$ satisfying
\begin{equation}\label{equation G=bigcuplimiiin}
F=\left(\bigcup\limits_{i\in I}\langle f_i\rangle\right) \cup H.
\end{equation}
Observe that
\begin{equation}\label{equation cap=0}
\left(\bigcup\limits_{i\in I}\langle f_i\rangle\right)\cap H=\{0_F\}.
\end{equation}
 Let $G=\sum\limits_{i\in I} (F\diagup\langle f_i\rangle)$ be the direct sum of quotient groups $F\diagup\langle f_i\rangle$.
Let $\pi_i:F\rightarrow F\diagup\langle f_i\rangle$ be the canonical epimorphism, and let
\begin{equation}\label{equation Gdiagup}
\iota_i: F\diagup\langle f_i\rangle\rightarrow \sum\limits_{i\in I} (F\diagup\langle f_i\rangle)
\end{equation}
 be the canonical injection, and let
 \begin{equation}\label{equation psii=iotaipii}
 \psi_i=\iota_i\pi_i: F\rightarrow \sum\limits_{i\in I} (F\diagup\langle f_i\rangle),
  \end{equation}
  where $i\in I$. Observe that for each  $i\in I$,
\begin{equation}\label{equation Ker psii=gi}
{\rm Ker} \ \psi_i=\langle f_i\rangle,
\end{equation}
and  $$|{\rm Im} \ \psi_i|=\infty$$
 since $F\diagup H$ is noncyclic.
Let $\Psi=\{\psi_i: i\in I\}$.
It remains to prove ${\rm D}_{\Psi}(G)=t$.

Let $L$ be an arbitrary zero-sum free sequence of terms from the subgroup $H$.
By \eqref{equation cap=0} and \eqref{equation Ker psii=gi}, we derive that $\psi_i\mid_H$ is injective and so
$\psi_i(H)\cong H$ where $i\in I$. Combined with \eqref{equation Gdiagup} and \eqref{equation psii=iotaipii}, we conclude that $L$ contains no nonempty $\Psi$ zero-sum subsequence. This gives that ${\rm D}_{\Psi}(G)\geq  |L|$. By the arbitrariness of $L$, we have that ${\rm D}_{\Psi}(G)\geq t$ or ${\rm D}_{\Psi}(G)=\infty$ according to $t\in \mathbb{N}$ or $t$ takes infinity, respectively.

To completes the proof, it suffices to show ${\rm D}_{\Psi}(G)\leq t$ in case of $t\in \mathbb{N}$.
Let $T$ be a sequence over $F$ of length $t={\rm D}(H)$. If there exists some term $x\mid T$ such that $x\notin H$, it follows from \eqref{equation G=bigcuplimiiin} and \eqref{equation Ker psii=gi} that $\psi_i(x)=0_G$ for some $i\in I$,  done. Otherwise, all terms of $T$ are from $H$, it follows from $|T|={\rm D}(H)$ that $T$ contains a nonempty subsequence $V$ with $\sigma(V)=0_F$, and so $\prod\limits_{a\in V}\psi(a)$ is a $\Psi$-zero-sum subsequence of $T$ with any given $\psi\in \Psi$, done. \end{proof}

Note that in the above proposition, the constructed groups $F, G$ and $\Psi$ meet the sufficient condition $|X\diagup {\rm Ker} \ \tau|<\infty$ in Lemma \ref{lemma sufficient condition}. However, in Example \ref{Exam in Z}, we could expect ${\rm D}_{\Psi}(G)<\infty$ even though both $|{\rm Im}\ \tau|$ and $|X\diagup {\rm Ker} \ \tau|$ are infinite for each $\tau\in \Psi$. That is, we don't yet reach the possible necessary and sufficient conditions for ${\rm D}_{\Psi}(G)<\infty$ in case of $|\Psi|$ being {\sl infinite}. For this  we shall propose a problem in the end of this paper. Here we put Example \ref{Exam in Z} which has been used previously as illustration for a couple of times.

\begin{exam}\label{Exam in Z} Let $F=G=\mathbb{Z}$ be the additive group of integers. Let $\Psi=\{\psi_k: k\in \mathbb{Z}\setminus \{0\}\}$ consisting of
all nonzero (scalar) endomorphisms of the group $\mathbb{Z}$, where $\psi_k$ maps any integer $a$ to $ka$. Let $A=\mathcal{B}(G)\setminus \{\varepsilon\}$ consisting of all nonempty zero-sum sequences. We can verify that ${\rm D}_{\Psi}(G)={\mathsf d}_{A; \Psi}(G)=2$ and there exists no minimal set $\Omega$ contained in $A$ such that ${\mathsf d}_{\Omega; \Psi}(G)=2$.
\end{exam}

\noindent {\sl Sketch of the argument for Example \ref{Exam in Z}}.  The conclusion ${\mathsf d}_{A; \Psi}(G)=2$ holds clearly. To proceed, we suppose to the contrary that there exists a minimal subset $\Omega$ of $A$ such that ${\mathsf d}_{\Omega; \Psi}(G)=2$. By the minimality of $\Omega$, every sequence in $\Omega$ is of length at most $2$. Since $\Omega\neq \{0\}$, we can take a sequence of length $2$ out of $\Omega$, say $n\cdot (-n)\in \Omega$ for some $n>0$. By the minimality of $\Omega$ again, $(kn)\cdot (-kn)\notin \Omega$ for any $k>1$, since otherwise $d_{\Omega\setminus \{n\cdot (-n)\}}(G)={\rm D}_{\Psi}(G)$. Then we derive that there is no $\Psi$ subsequence of the sequence $(2n)\cdot (-2n)$ which belongs to $\Omega$. This implies ${\mathsf d}_{\Omega; \Psi}(G)>2$, a contradiction. \qed

\section{Concluding section}

Following from the celebrated Neumann Theorem on finite covers of groups, the problems on covers were investigated widely from the point of Combinatorics or of Algebras (see \cite{GaoGeroldingercover, Sun2,Tomkinson} e.g.). From Theorem  \ref{Prop finiteness}, we see that the finiteness of weighted Davenport constant is closely related to the finite cover of an infinite abelian group $G$ by cosets of given subgroups.
In 2003, Z.-W. Sun \cite{Sun} has established deep connections between investigations on zero-sum and on covers.
To motivate more work on the connections between both distinct topics,  we give the following two Propositions \ref{theorem translate} and \ref{prop valuesequality}.

\begin{prop}\label{theorem translate} Let $F$ be an abelian group, and let $H_1,\ldots,H_t$ be subgroups of $F$. Then the following two conditions are equivalent:

(i) There exists a finite cover of $F$ by the cosets of these groups $H_1,\ldots,H_t$;

(ii) There exists an abelian group $G$ and homomorphisms $\psi_1,\ldots,\psi_t\in {\rm Hom}(F, G)$ with ${\rm Ker} \ \psi_i=H_i$ ($i\in [1,t]$) such that ${\rm D}_{\Psi}(G)<\infty$, where $\Psi=\{\psi_1,\ldots,\psi_t\}$.
\end{prop}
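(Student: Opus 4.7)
The plan is to read this off Theorem~\ref{Prop finiteness} in both directions, with only the $(i)\Rightarrow(ii)$ direction requiring a small construction.

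For $(ii)\Rightarrow(i)$, I would simply apply the implication $(i)\Rightarrow(ii)$ of Theorem~\ref{Prop finiteness} to the finite set $\Psi=\{\psi_1,\dots,\psi_t\}$: since ${\rm D}_{\Psi}(G)<\infty$, the group $F$ is covered by finitely many cosets of the subgroups ${\rm Ker}\ \psi_i=H_i$, which is exactly what $(i)$ asserts.

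For $(i)\Rightarrow(ii)$, the natural candidate for $G$ is the external direct sum
$$G=\bigoplus_{i=1}^{t}(F/H_i),$$
with $\psi_i=\iota_i\circ\pi_i$, where $\pi_i\colon F\to F/H_i$ is the canonical epimorphism and $\iota_i\colon F/H_i\hookrightarrow G$ is the canonical injection into the $i$-th summand. This is essentially the same construction used in the proof of Proposition~\ref{theorem general finiteness}. A one-line verification shows that ${\rm Ker}\ \psi_i=\pi_i^{-1}(0)=H_i$ for each $i\in[1,t]$, because $\iota_i$ is injective. With this $G$ and $\Psi=\{\psi_1,\dots,\psi_t\}$ in hand, the assumption $(i)$ provides a finite cover of $F$ by cosets of the $H_i={\rm Ker}\ \psi_i$; since $\Psi$ is finite, the implication $(ii)\Rightarrow(i)$ of Theorem~\ref{Prop finiteness} then yields ${\rm D}_{\Psi}(G)<\infty$, as required.

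There is no real obstacle; the statement is tailored so that Theorem~\ref{Prop finiteness} applies in both directions, and the only creative step is choosing $G$ so that the kernels come out to be precisely the prescribed subgroups $H_1,\dots,H_t$. The direct sum of the quotients $F/H_i$ does this in the most economical way, so the entire proof should fit in a few lines once Theorem~\ref{Prop finiteness} is invoked.
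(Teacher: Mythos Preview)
Your proposal is correct and matches the paper's proof essentially line for line: the same direct-sum construction $G=\bigoplus_{i=1}^t F/H_i$ with $\psi_i=\iota_i\pi_i$ is used, and both directions are reduced to Theorem~\ref{Prop finiteness}. The only cosmetic difference is that for $(i)\Rightarrow(ii)$ the paper passes through Lemma~\ref{lemma Neumann} to reach condition~(iii) of Theorem~\ref{Prop finiteness}, whereas you invoke condition~(ii) of that theorem directly---which is slightly cleaner since the Neumann step is already packaged inside the proof of Theorem~\ref{Prop finiteness}.
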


\begin{proof} (ii) $\Rightarrow$ (i) follows from  Theorem \ref{Prop finiteness}  readily.

(i) $\Rightarrow$ (ii).  Let $F=\bigcup\limits_{i=1}^t \left(\bigcup\limits_{j=1}^{k_i}(a_{i, j}+H_i)\right)$ be a finite cover of $F$. It suffices to find the group $G$ and homomorphisms $\psi_1,\ldots,\psi_t$ with desired properties. Let $G=\sum\limits_{i=1}^{t} (F\diagup H_i)$ be the direct sum of quotient groups $F\diagup H_i$.
Let $\pi_i:F\rightarrow F\diagup H_i$ be the canonical epimorphism, and let
$\iota_i: F\diagup H_i \rightarrow \sum\limits_{i=1}^k (F\diagup H_i)$
 be the canonical injection, and let
$\psi_i=\iota_i\pi_i: F\rightarrow G$. Observe that $${\rm Ker} \ \psi_i=H_i.$$ By Lemma \ref{lemma Neumann},  we conclude that there exists $i\in [1,t]$ such that $[F: H_i]<\infty$, i.e., ${\rm Im} \ \psi_i\cong F\diagup H_i$ is finite. Then ${\rm D}_{\Psi}(G)<\infty$ follows from Theorem \ref{Prop finiteness}. This completes the proof of the theorem.
\end{proof}

Furthermore, with some result in quantitative investigation from cover theory for groups, in Proposition \ref{prop valuesequality} we can derive an upper bound for ${\rm D}_{\Psi}(G)$ by the size of irredundant finite covers, in case that $\Psi$ being finite.

\begin{lemma} \cite{Sun3} \label{Lemma Sun} Let $F$ be an abelian group and $\{a_i+H_i\}_{i=1}^k$ be a finite irredundant cover of $F$.
Then $[F:H_i]\leq 2^{k-1}$ for each subgroup $H_i$.
\end{lemma}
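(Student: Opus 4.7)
I would prove Lemma \ref{Lemma Sun} by induction on $k$, the size of the irredundant cover. The base case $k=1$ is immediate: the single-coset cover $F=a_1+H_1$ forces $H_1=F$, so $[F:H_1]=1=2^0$.

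For the inductive step, fix an arbitrary index $i\in[1,k]$. Using irredundancy, pick an element $b\in(a_i+H_i)\setminus\bigcup_{j\neq i}(a_j+H_j)$ and translate the whole cover by $-b$. This operation preserves irredundancy as well as every index $[F:H_j]$, and after translation we may assume $a_i+H_i=H_i$ together with $0\notin a_j+H_j$ for every $j\neq i$. Next I pass to the quotient $Q=F/H_i$, which is finite by Lemma \ref{lemma Neumann} (applied to the irredundant cover, every $[F:H_j]$, in particular $[F:H_i]$, is finite). Writing $\pi:F\to Q$, the cover pushes down to
\[Q=\{0_Q\}\cup\bigcup_{j\neq i}\pi(a_j+H_j),\]
where each $\pi(a_j+H_j)$ is a coset of $\pi(H_j)=(H_j+H_i)/H_i$ in $Q$ that, by our translation choice, avoids $0_Q$. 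The goal thus reduces to deriving $|Q|\leq 2^{k-1}$ from this structural setup.

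To finish, I would extract an irredundant subcover of the set $Q\setminus\{0_Q\}$ from the $k-1$ cosets $\pi(a_j+H_j)$, and observe that each subgroup $\pi(H_j)$ appearing in this subcover is proper in $Q$ (otherwise $\pi(a_j+H_j)=Q$ would contain $0_Q$), hence has index at least $2$. A doubling argument---pairing each coset $\pi(a_j+H_j)$ with the ``opposite'' coset in the quotient $Q/\pi(H_j)$ and tracking how intersections with the remaining cosets behave---should then yield a factor-of-$2$ gain per inductive step and deliver $|Q|\leq 2^{k-1}$.

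The main obstacle is the bookkeeping at this last stage: the projected cover need not remain irredundant, and the point-coset $\{0_Q\}$ requires careful handling since it corresponds to the trivial subgroup rather than a proper one. A clean workaround is to prove by joint induction a slightly stronger statement---namely, that for an irredundant cover of an abelian group of size $k$ in which one of the cosets is a distinguished singleton, every subgroup appearing has index at most $2^{k-1}$. The inductive step then passes from size $k$ to size $k-1$ while multiplying the index bound by at most $2$, matching the claimed exponential growth. I expect this coupling of the size parameter with the singleton marker to be the subtle point, as it is the standard way of absorbing the extra ``collapsed'' coset produced by the quotient construction.
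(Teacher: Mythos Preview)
The paper does not prove Lemma \ref{Lemma Sun}; it is quoted from \cite{Sun3} and invoked as a black box in the proof of Proposition \ref{prop valuesequality}. There is thus no argument in the paper to compare your proposal against.

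Your proposal itself has a genuine gap in the quotient step. After translating so that $a_i+H_i=H_i$ and $0\notin a_j+H_j$ for every $j\neq i$, you assert that each projected coset $\pi(a_j+H_j)$ in $Q=F/H_i$ avoids $0_Q$. This does not follow: one has $0_Q\in\pi(a_j+H_j)$ if and only if $(a_j+H_j)\cap H_i\neq\emptyset$, whereas your translation only guarantees $0\notin a_j+H_j$, not that $a_j+H_j$ misses all of $H_i$. When some $a_j+H_j$ does meet $H_i$, its projection contains $0_Q$ and may even equal $Q$ (namely when $H_j+H_i=F$), so the claim that every $\pi(H_j)$ is a proper subgroup of $Q$ breaks down, and with it the ``index at least $2$'' ingredient of your doubling scheme. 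The strengthened inductive hypothesis you sketch in the final paragraph is also only a plan: you do not explain how the singleton-marked cover survives the simultaneous loss of irredundancy under projection, nor how the factor of $2$ is actually extracted at each step. A correct proof of this bound (as in \cite{Sun3}) requires a more careful mechanism for counting how many cosets of $H_i$ can be absorbed by the remaining $k-1$ cosets, rather than a direct passage to $F/H_i$.
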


\begin{prop}\label{prop valuesequality} Let $F, G$ be abelian groups, and let $\Psi$ be a  finite nonempty subset of  ${\rm Hom}(F, G)$ such that ${\rm D}_{\Psi}(G)<\infty$.  Then ${\rm D}_{\Psi}(G)\leq 2^{M-1}$ where $M$ denotes the largest size of  irredundant finite covers of $F$ by cosets of subgroups ${\rm Ker} \ \psi$ ($\psi\in \Psi$).
\end{prop}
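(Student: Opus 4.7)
The plan is to chain together the quantitative bound of Lemma \ref{Lemma Sun} with the sufficient condition of Lemma \ref{lemma sufficient condition}, using Theorem \ref{Prop finiteness} to produce the cover that feeds into Lemma \ref{Lemma Sun}. Since $\Psi$ is finite and ${\rm D}_\Psi(G)<\infty$, the implication (i)$\Rightarrow$(ii) of Theorem \ref{Prop finiteness} supplies a finite cover of $F$ by left cosets of the subgroups $\{{\rm Ker}\ \psi : \psi\in\Psi\}$. Because this cover is finite, we can discard redundant cosets one at a time until the remaining system $\mathcal{C}$ is an irredundant cover of $F$; let $k=|\mathcal{C}|$. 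By the definition of $M$ we have $k\leq M$.

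Next, I would apply Lemma \ref{Lemma Sun} to $\mathcal{C}$: every subgroup appearing in $\mathcal{C}$ has index at most $2^{k-1}\leq 2^{M-1}$ in $F$. Pick any $\tau\in\Psi$ whose kernel appears in $\mathcal{C}$; then $[F:{\rm Ker}\ \tau]\leq 2^{M-1}$, which is in particular finite. Set $X=F\setminus\bigcup_{\psi\in\Psi}{\rm Ker}\ \psi$. Since $\tau\in\Psi$, one has ${\rm Ker}\ \tau\cap X=\emptyset$ (for example $0_F\notin X$), so the number of cosets of ${\rm Ker}\ \tau$ meeting $X$ is at most $[F:{\rm Ker}\ \tau]-1$, i.e. $|X\diagup{\rm Ker}\ \tau|+1\leq[F:{\rm Ker}\ \tau]$. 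Invoking Lemma \ref{lemma sufficient condition} now gives
$$
{\rm D}_{\Psi}(G)\leq |X\diagup{\rm Ker}\ \tau|+1\leq [F:{\rm Ker}\ \tau]\leq 2^{M-1},
$$
which is the desired inequality.

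I do not anticipate any deep obstacle; the proposition is essentially a packaging exercise. The two places that require a moment of care are the trimming step, where I rely on the starting cover being \emph{finite} (guaranteed by Theorem \ref{Prop finiteness} together with $|\Psi|<\infty$) so that discarding redundant cosets terminates and delivers an irredundant sub-cover to which Lemma \ref{Lemma Sun} is applicable, and the inequality $|X\diagup{\rm Ker}\ \tau|+1\leq [F:{\rm Ker}\ \tau]$, which depends precisely on $\tau$ being chosen from $\Psi$ so that the trivial coset ${\rm Ker}\ \tau$ is automatically disjoint from $X$. With these two observations in place, the bound follows from combining the named lemmas without further work.
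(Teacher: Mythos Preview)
Your proof is correct and follows essentially the same route as the paper: obtain a finite cover of $F$ by cosets of the kernels (the paper cites Proposition~\ref{theorem translate}, you cite Theorem~\ref{Prop finiteness} directly, which amounts to the same thing), trim to an irredundant subcover of size $k\le M$, and apply Lemma~\ref{Lemma Sun} to get $[F:{\rm Ker}\ \tau]\le 2^{M-1}$ for some $\tau\in\Psi$. The only cosmetic difference is the final bounding step: the paper uses the monotonicity ${\rm D}_{\Psi}(G)\le {\rm D}_{\{\tau\}}(G)={\rm D}({\rm Im}\ \tau)\le |{\rm Im}\ \tau|=[F:{\rm Ker}\ \tau]$ directly, whereas you route through Lemma~\ref{lemma sufficient condition} together with the observation that ${\rm Ker}\ \tau$ misses $X$; both arrive at the same inequality.
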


\begin{proof} By Proposition \ref{theorem translate}, we have a finite cover $\mathcal{S}$ of $F$ by cosets of subgroups ${\rm Ker} \ \psi$ ($\psi\in \Psi$). Take $\mathcal{S}'\subset \mathcal{S}$ to be an irredundant cover of $F$.  We see that $k=|\mathcal{S}'|\leq M$. Take a coset  $a+{\rm Ker} \ \tau \in \mathcal{S}'$. By Lemma \ref{Lemma Sun}, we have $[F: {\rm Ker} \ \tau]\leq 2^{k-1}$. It follows that ${\rm D}_{\Psi}(G)\leq {\rm D}_{\{\tau\}}(G)=D({\rm Im} \ \tau)\leq |{\rm Im} \ \tau|=[F: {\rm Ker} \ \psi]\leq 2^{k-1}\leq 2^{M-1}$, completing the proof.
\end{proof}

Then we close this paper with the following problem.

\begin{problem}\label{Problem 1} Let $F$ and $G$ be infinite abelian groups, and let $\Psi$ be an {\bf infinite} nonempty subset of  ${\rm Hom}(F, G)$. Find the necessary and sufficient conditions for ${\rm D}_{\Psi}(G) <\infty$ in terms of $\Psi$.
\end{problem}

\bigskip

\noindent {\bf Acknowledgements}

\noindent  This paper is presented to Professor Weidong Gao on his sixty and to appear on Communications in Algebra. 
This work is supported by NSFC (grant no. 12371335, 11971347, 12271520).

\end{document}